\documentclass[11pt,a4paper]{article}
\usepackage{geometry}

\UseRawInputEncoding
\usepackage[utf8]{inputenc}
\usepackage[T1]{fontenc}
\usepackage{amsmath}
\usepackage{amsthm}
\usepackage{amssymb}
\usepackage{graphicx}
\usepackage{caption}
\usepackage{multirow}
\usepackage{mathrsfs}
\usepackage{tabularx}
\usepackage{dsfont}
\usepackage{hyperref}
\usepackage[dvipsnames]{xcolor}
\usepackage{verbatim}
\usepackage[shortlabels]{enumitem}
\hypersetup{colorlinks=true, linkcolor=blue!60!black, urlcolor=red!60!black, citecolor=blue!60!black}
\usepackage{setspace}
\usepackage[ruled,vlined]{algorithm2e}
\usepackage{subcaption}
\usepackage{booktabs}

\SetKwProg{Init}{Initialisation}{}{}
\usepackage{float}
\usepackage[normalem]{ulem}
\usepackage{array,multirow,makecell}
\usepackage[justification=centering]{caption}
\setcellgapes{1pt}
\makegapedcells
\newcolumntype{R}[1]{>{\raggedleft\arraybackslash }b{#1}}
\newcolumntype{L}[1]{>{\raggedright\arraybackslash }b{#1}}
\newcolumntype{C}[1]{>{\centering\arraybackslash }b{#1}}
\makeatletter
\newcommand\justify{%
  \let\\\@centercr
  \rightskip\z@skip
  \leftskip\z@skip}
\makeatother

\newtheorem{thm}{Theorem}[section]
\newtheorem{coro}[thm]{Corollary}
\newtheorem{prop}[thm]{Proposition}
\newtheorem{lem}[thm]{Lemma}
\newtheorem{ass}{Assumption}[section]

\theoremstyle{definition}
\newtheorem{defi}{Definition}[section]
\newtheorem{exe}[thm]{Example}
\newtheorem{rmq}{Remark}[section]

\newenvironment{preuve}{\begin{proof} \rm}{\end{proof}}

\newtheorem*{prop*}{Proposition}
\newtheorem*{rmq*}{Remark}

\numberwithin{equation}{section}

\newcommand{\p}{\mathbb{P}}
\newcommand{\R}{\mathbb{R}}

\newcommand{\e}{\mathbb{E}}
\newcommand{\g}{\mathbb{G}}

\newcommand{\ind}{\mathds{1}}
\newcommand{\f}{\mathbb{F}}

\newcommand{\Rr}{\mathcal{R}}
\newcommand{\U}{\mathcal{U}}
\newcommand{\I}{\mathcal{I}}

\newcommand{\M}{\mathcal{M}}
\newcommand{\Gr}{\mathcal{G}}

\newcommand{\N}{\mathcal{N}}
\newcommand{\Qr}{\mathcal{Q}}
\newcommand{\Y}{\mathcal{Y}}
\newcommand{\Sr}{\mathcal{S}}
\newcommand{\Z}{\mathcal{Z}}

\newcommand{\E}{\mathcal{E}}
\newcommand{\K}{\mathcal{K}}

\newcommand{\F}{\mathcal{F}}

\newcommand{\A}{\mathcal{A}}
\newcommand{\Lr}{\mathcal{L}}

\newcommand{\ABS}[1]{{\left| #1 \right|}} 
\newcommand{\PAR}[1]{{\left(#1\right)}} 
\newcommand{\SBRA}[1]{{\left[#1\right]}} 
\newcommand{\BRA}[1]{{\left\{#1\right\}}} 
\newcommand{\NRM}[1]{{\left\Vert #1\right\Vert}} 

\DeclareMathOperator{\Tr}{Tr}

\DeclareMathOperator{\Proj}{Proj}
\DeclareMathOperator{\argmin}{argmin}
\DeclareMathOperator{\dist}{dist}
\DeclareMathOperator{\diag}{diag}

\DeclareMathOperator{\Err}{Err}

\DeclareMathOperator{\loc}{loc}

\newcommand\blfootnote[1]{%
  \begingroup
  \renewcommand\thefootnote{}\footnote{#1}%
  \addtocounter{footnote}{-1}%
  \endgroup
}

\title{
Deep numerical schemes for systems of Ergodic BSDEs with applications to regime-switching forward utilities \blfootnote{Acknowledgements:  The authors research is part of the ANR project DREAMeS (ANR-21-CE46-0002) and benefited from the support of respectively the "Chair Risques Emergents en Assurance"  and  "Chair Impact de la Transition Climatique en Assurance"  under the aegis of Fondation du Risque, a joint initiative by  Risk and Insurance Institute of  Le Mans,  and MMA-Cov\'ea and Groupama respectively. }}

\author{Guillaume Broux-Quemerais\textsuperscript{1} \and Sarah Kaakai\textsuperscript{2} \and Anis Matoussi\textsuperscript{1} \and Wissal Sabbagh\textsuperscript{1}}

\date{}

\begin{document}

\maketitle

\rightskip=1.5cm
\leftskip=1.5cm

\footnotetext[1]{\small Laboratoire Manceau de Math\'ematiques \& FR CNRS N\textsuperscript{o} 2962, Institut du Risque et de l'Assurance, Le Mans Université}
\footnotetext[2]{Laboratoire Analyse, Géométrie et Applications, UMR CNRS 7539, Université Sorbonne Paris Nord}

\begin{small}
\begin{center}
\section*{Abstract}
\end{center}

In this paper, we introduce two neural-network-based numerical schemes for solving systems of coupled ergodic Backward Stochastic Differential Equations (eBSDEs), motivated by the approximation of optimal strategies within the framework of forward utilities in a regime-switching stochastic factor model. Our approach builds on the representation of such models through systems of eBSDEs introduced in \cite{hu2020systems}. We first establish a link between the solution of the system of ergodic BSDEs and that of an associated multidimensional BSDE with random terminal time, given by the hitting time of the positive recurrent stochastic factor. Building on this representation, we introduce a locally additive deep learning scheme obtained by minimizing aggregated local error terms. We then present a  new Deep Galerkin Method (DGM) inspired algorithm that minimizes the residual of the associated ergodic PDE system, relying on a representation of the ergodic cost. Finally, we apply this framework to regime-switching forward utilities in a stochastic factor model. We first derive a  general consistency SPDE that characterizes regime-switching forward utilities and retrieve their representation with systems of ergodic BSDEs in the homothetic case.  Numerical experiments demonstrate the performance of the proposed methods, with a particular focus on the impact on forward preferences of taking into account regime switches.
\end{small}
\newpage

\rightskip=0cm
\leftskip=0cm

\vspace{1cm}
\section*{Introduction}
In this paper, we are interested in the numerical approximation of certain classes of forward performance/utility processes in a regime-switching financial market, and their associated optimal decision criterion. Introduced by \cite{musiela2006investments}, forward utilities offer an interesting alternative to the classical setting of expected utility maximization at a terminal time. This forward-looking approach enables the dynamic adjustment of the decision criteria, starting from preferences which are known at an initial time, rather than imposing a potentially distant and arbitrary time horizon. In a continuous setting, the agent's preferences are represented by a utility random field $U(t, \cdot)$. This preference criterion maintains time consistency within the given investment or decision-making context, in the sens that  if $X_t^\pi$ is the observable process (typically the wealth), resulting from the admissible strategy  $\pi$, then the preference process $U(t, X_t^\pi)$ is a supermartingale, and there exists an optimal strategy such that the preference process is a martingale.

Since their introduction, there have been significant theoretical advancements in the field of forward utilities. In a general framework, \cite{musiela2010stochastic} established a sufficient condition for time-consistency when the utility random field follows dynamics of It\^o type. This condition takes the form of a nonlinear SPDE of Hamilton-Jacobi-Bellman (HJB) type satisfied by $U$. By establishing a correspondence between the SPDE and the compound of two SDEs, the authors in \cite{nicole2013exact} provide a method to construct forward utilities. This work has been extended to consistent utility of investment and consumption  in \cite{el2018consistent}, and has been applied, for instance,  to derive consistent utilities in stochastic factor market models (see e.g. \cite{nadtochiy2014class}, \cite{avanesyan2020construction}, \cite{liang2017representation}). To the best of our knowledge, the study of forward utilities in regime-switching markets has only been tackled in \cite{hu2020systems}. Forward utilities have found diverse applications over recent years, including but not limited to option valuation, insurance,  mean field games (\cite{lacker2019mean}, \cite{dos2021forward}), long term interest rate modeling (\cite{el2022ramsey}), risk measures  (\cite{chong2019pricing} or more recently pension design (\cite{hillairet2022time},\cite{ng2024optimal}). Surprisingly, the subject of numerical methods for forward preferences remains largely unexplored,  despite its critical importance for practical applications.  In \cite{gobet2018convergence}, a general approach is proposed using  strong approximations of compounds of random maps. In this paper, we focus on a different point of view, introduced in \cite{hu2020systems}, to develop new numerical methods for forward utilities in regime-switching markets, based on a representation with means of systems of ergodic BSDEs.

Regime-switching models, also called Markov-modulated dynamics, are designed to capture financial markets that exhibit multiple states. In addition to the standard Brownian motion, which drives stock price dynamics, these models incorporate an additional source of randomness represented by a finite-state continuous-time Markov chain, where each state corresponds to a distinct market regime. For instance, a two-regime model may describe a "bull market," characterized by rising asset prices, and a "bear market," marked by declining prices. In this framework, the Brownian motion captures persistent microeconomic effects, while the Markov chain accounts for occasional, macroeconomic shifts in market conditions. Regime-switching models have been extensively studied in the context of option pricing; see, for example, \cite{buffington2002american}, \cite{Guo2004}, \cite{Yao2006}, \cite{jobert2006option}. Portfolio optimization problems within regime-switching frameworks have also been considered, including mean-variance portfolio selection in \cite{zhou2003markowitz} and utility maximization across various models, see \cite{zariphopoulou1992investment}, \cite{bauerle2004portfolio}, \cite{sotomayor2009explicit}, and \cite{fu2014portfolio}.

Our motivation is the representation of an agent's preferences in an incomplete financial market presenting multiple states. Formally, the regime transitions are governed by a continuous-time Markov chain $\alpha$, with a finite state space $\I = \BRA{1, \dots, I}.$ In this context, the decision criteria are built upon a family of utility random fields $\PAR{U^{i}}_{i \in \I}$, each representing the agent's preferences in regime $i$. The regime-switching utility is the càdlàg process defined as
\begin{eqnarray*}
U(t, x) = U^{\alpha_t}(t,x) = \sum_{i \in \I} U^i(t,x) \ind_{\{\alpha_t = i\}}.
\end{eqnarray*}
We focus on a stochastic factor model in which the dynamics of stock prices are influenced by a stochastic factor $V$. This leads to the study of regime-switching preferences based on families of homothetic utilities, which are expressed as separable functions of the stochastic factor. For each $i \in \I$, these are given by $U^{i}(t,x) = u(x)e^{f^{i}(t,V_t)}$, where $u$ is a standard exponential or power utility function (with an additive expression in the logarithmic case). While the main focus of \cite{hu2020systems} is on existence and uniqueness of the Markovian solution to systems of eBSDEs, they also characterize the
functions $f^{i}$ as solutions of such systems for homothetic utilities in power form.

Systems of ergodic BSDEs generalize the notion of ergodic BSDEs introduced in \cite{fuhrman2009ergodic} (see also \cite{buckdahn1999ergodic} with a different formulation with stationary processes ), \cite{debussche2011ergodic}, \cite{hu2019ergodic}), to the case of multidimensional and coupled components $Y$. Informally, a system of ergodic BSDEs with generators $(F^{i})_{1 \leq i \leq I}$ and coupling terms $(G^{i})_{1 \leq i \leq I}$ is a system of backward stochastic differential equation on an infinite horizon, whose solution is a triplet $((Y^{i}, Z^{i})_{1 \leq i \leq I}, \lambda)$ where $\lambda \in \R$ and for all $i = 1, ..., I$, processes $Y^{i}, Z^{i}$ are adapted and satisfy for any $T>0$, $\p$-a.s for any $0 \leq t \leq T$:
\begin{eqnarray} \label{eq:sebsdeintroarticle}
Y_{t}^{i} &=& Y_{T}^{i} + \int_{t}^{T} \PAR{F^{i}(V_{s}, Z_{s}^{i}) + G^{i}(Y_{s})}ds - \lambda (T-t) - \int_{t}^{T} (Z_{s}^{i})^{\top} dW_{s}.
\end{eqnarray}
Existence and uniqueness of a Markovian solution to systems of eBSDEs of the form \eqref{eq:sebsdeintroarticle} are established in \cite{hu2020systems}, under assumptions analogous to those for classical ergodic BSDEs: locally Lipschitz
conditions on $F^{i}$ and strong dissipativity of the stochastic factor. Such systems are
genuinely multidimensional quadratic BSDEs, for which existence and uniqueness have been extensively studied, see \cite{frei2011financial} \cite{Tevzadzequadr08, Frei2014, Hu2016,
Xingquadr16, harterrichou19}. In \cite{hu2020systems}, the quadratic growth is handled by a truncation argument exploiting a specific property of the solution, namely the
boundedness of the $Z$ component.

Numerical methods for ergodic BSDEs remain, by contrast, largely unexplored. The authors
of \cite{gobet2024numerical} propose a fully implementable scheme with an approximation-error analysis based on nonlinear Feynman--Kac formulas. In a different
direction, machine-learning methods for high-dimensional nonlinear PDEs and BSDEs have
been investigated extensively in recent years \cite{han2017deep, hure2020deep, germain2021neural,
kapllani2020deep}, and were first extended to the ergodic setting in \cite{deepbroux2024}.
We pursue this line for systems of ergodic BSDEs arising from regime-switching forward
utilities.

\paragraph{Objective and contributions.}
The aim of this paper is to develop numerical methods for the approximation of Markovian solutions to systems of ergodic BSDEs, with a particular focus on those arising from homothetic regime-switching forward utilities.

Our first contribution is theoretical. We state rigorously the equivalence between the system of ergoidc BSDEs, the ergodic PDE system and the ergodic BSDE with jumps. These correspondences underpin the foundations of our algorithms. Computing the dynamics of $U(t, X_t^\pi)$ by means of a generalized Itô--Ventzell formula with jumps \cite{oksendal2007ito, matoussi2022dynamic}, we derive a consistency SPDE with jumps (Theorem
\ref{thm:consistswitch}) that characterizes regime-switching forward utilities in our framework, providing a general consistency condition in regime-switching models. Applied to homothetic preferences, it identifies the systems of ergodic BSDEs associated with power, exponential and logarithmic utilities, recovering in particular the
power-utility system of \cite{hu2020systems}.

Our second contribution is numerical. We introduce two neural-network schemes for the simultaneous approximation of the ergodic cost $\lambda$ and the processes $Y$ and $Z$: a locally additive deep BSDE scheme (LAeBSDE) and an extension of the Deep Galerkin Method (DGM). The probabilistic foundation of the first scheme rests on a random-horizon
reformulation of the ergodic system. Since the initial agent's utility is known, an initial condition $Y_0^{\alpha_0} \in \R$ is naturally available. Leveraging the recurrence of the stochastic factor $V$, we introduce the first return time $\tau$ of $V$ to its initial point $v_0$, which is almost surely finite under the dissipativity
assumption and satisfies $Y_\tau = Y_0$. We show that
the solution of the ergodic system \eqref{eq:sebsdeintroarticle} then coincides with that of an associated system of BSDEs with random terminal time $\tau$, and, under suitable
integrability of $\tau$, that this solution is unique, building on \cite{pardoux1998backward}
and on the fact that $\lambda$ is pinned down by the bridge condition $Y_\tau = Y_0$. The argument is more involved than in \cite{deepbroux2024} since the normalization only fixes one point $v_{0}$ in one state $i_{0}$ of the solution.

Building on this reformulation, the LAeBSDE scheme discretizes $V$ by an Euler scheme, estimates the horizon $\tau$ along each trajectory, and treats $\lambda$ as a trainable parameter, optimized jointly with two networks approximating $Y$ and $Z$ through an aggregation of local losses. The DGM scheme follows a distinct route by approximating the Markovian solution directly through the minimization of the residual of the associated ergodic PDE system at randomly sampled points, the ergodic cost being recovered from its invariant-measure representation.

Finally, we assess the performance of the two algorithms on several examples. We first construct a class of examples admitting an explicit solution, by starting from a sublinear ansatz and choosing the market price of risk accordingly. We then study a two-state market model with regime-switching power utility and an affine market price of risk, where a comparison with the non-switching case highlights the economic implications of our framework for
forward preferences.

\vspace{0.3cm}
\noindent
The paper is organized as follows. In Section \ref{sect:coupledsebsde}, we recall the framework and the existence and uniqueness results associated with the system of ergodic BSDEs. We also state the ergodic PDE and BSDE with jumps associated with the system of ergodic BSDE. In Section \ref{section:algoswitch}, we establish a correspondence between systems of eBSDEs and BSDEs with random terminal time, which is needed for the definition of the locally additive scheme. We then extend the Deep-Galerkin method to the approximation of ergodic PDEs. Section \ref{section:forwardutilitiesswitch} introduce the regime-switching stochastic factor market model and the notion of forward utilities. We then derive the consistency SPDE that allows the identification of systems of ergodic BSDEs associated with power, exponential and logarithmic homothetic utilities. Finally, in Section \ref{section:numresswitch} we present numerical results on one benchmark example and interpret financial implications of regime-switches through an example of power utility with affine market price of risk.

\newpage

\noindent\textbf{Notations:}\\
All stochastic processes in the sequel are defined on a standard probability space $\PAR{\Omega, \F,\f,  \p}$, on which we consider a $d$-dimensional Brownian motion $W$, and denote $\f = \PAR{\F_{t}}_{t \geq 0}$ its augmented filtration. 

For $x \in \R^{d}$, we denote $x^{\top}$ the transpose of vector $x$, $\NRM{.}$ the usual norm $\NRM{x} = \Tr(x x^{\top})^{\frac{1}{2}}$, $\dist(x, \Pi)$ the distance function of $x$ to a closed convex subset $\Pi \subset \R^{d}$ and $\diag(x) = \PAR{\diag(x)_{i j}}_{1 \leq i, j \leq d}\in \R^{d \times d}$, the matrix such that $\diag(x)_{ii} = x^{i}$ and $\diag(x)_{i j} = 0$ for $i \neq j$. For $X = \PAR{X_{t}}_{t \geq 0}$ a càdlàg stochastic process, we denote $X_{t^-}$ the left-limit of this process at time $t$. 

For any integer $k \geq 1$ and positive constant $B \in \R^{+*}$, let $\varphi_{B}$  denote the projection on the centered ball of $\R^{k}$ of radius $B$. We denote $L^{2}$ the space of square integrable random variables and also introduce the usual spaces of solution for $\gamma \in \R$ and $\tau$ a $\f$ stopping time:
\begin{small}
\begin{eqnarray*}
\Sr^{2}(\gamma, \tau) &=& \BRA{(\varphi_{t})_{t \geq 0}, \, \R^{k}\text{-valued progressively measurable process} \, \text{s.t.} \, \e \SBRA{\underset{0 \leq s \leq \tau}{\sup} e^{\gamma s} \NRM{\varphi_{s}}^{2}} < \infty.} \\
\M(\gamma, \tau) &=& \BRA{(\varphi_{t})_{t \geq 0}, \, \R^{k}\text{-valued progressively measurable process} \, \text{s.t.} \, \e \SBRA{\int_{0}^{\tau} e^{\gamma s} \NRM{\varphi_{s}}^{2}} < \infty.}
\end{eqnarray*}
\end{small}

\section{System of coupled ergodic BSDEs} \label{sect:coupledsebsde}

We are interested in the numerical study of coupled systems of ergodic BSDEs, as introduced in \cite{hu2020systems}. Let $I > 1$, and denote $\I = \BRA{1, \dots, I}$. In the following, we call systems of ergodic BSDEs with generators $(F^{i})_{i \in \I}$ and coupling terms $(G^{i})_{i \in \I}$, a multidimensional BSDE on an infinite horizon, whose solution is a triplet $((Y^{i}, Z^{i})_{i \in \I}, \lambda)$ where $\lambda \in \R$ and $(Y^{i}, Z^{i})_{1\le i \le I}$ is a family of  $\f$-adapted  processes taking values in $\R \times \R^d$, satisfying for any $T>0$ and $\p$-a.s for any $0 \leq t \leq T$:
\begin{eqnarray} \label{eq:sebsdeform}
Y_{t}^{i} &=& Y_{T}^{i} + \int_{t}^{T} \PAR{F^{i}(V_{s}^{v_0}, Z_{s}^{i}) + G^{i}(Y_{s})}ds - \lambda (T-t) - \int_{t}^{T} (Z_{s}^{i})^{\top} dW_{s},
\end{eqnarray}
 where  $V^{v_0}$ is a stochastic factor solution of 
\begin{equation}\label{Vequation}
    dV_t^{v_0} = \mu(V_{t}^{v_0})dt + \kappa^{\top} dW_{t}, \quad V_{0}^{v_0} = v_{0}\in \R^{d'} , \quad \kappa \in \R^{d \times d'}, 
\end{equation}
such that 
\begin{ass} \label{ass:weakdissass}  $\mu$ is Lipschitz, and there exists a constant $C_{\mu} > 0$ such that for any $v, \bar{v} \in \, \R^{d'}$
            \begin{eqnarray} \label{disscond:switch}
            \PAR{\mu(v) - \mu(\bar{v})}^{\top}(v - \bar{v}) \leq - C_{\mu}\NRM{v - \bar{v}}^{2}.
            \end{eqnarray}
and $\kappa \kappa^{\top}$ is a positive definite matrix. In particular, the diffusion $V^{v_0}$ is exponentially ergodic.
\end{ass}

The next lemma provides an estimate on the running supremum of the stochastic factor $V^{v_0}$, which will be used in Section \ref{sect:connectionrdtbsde} to establish integrability properties of the Markovian solution to the system of ergodic BSDEs. The proof is postponed to Appendix \ref{proofLemma 1.1}.
\begin{lem} \label{lem:runningsupV}
Suppose that Assumption \ref{ass:weakdissass} holds and let $V^{v_0}$ be the solution of \eqref{Vequation}. Then for every $q \geq 1$, there exists a constant $C_{q} > 0$, depending only on $q$, $C_{\mu}$ and $\kappa$, such that for every stopping time $\tau$,
\begin{eqnarray}
\e \SBRA{\underset{0 \leq t \leq \tau}{\sup} \ABS{V_{t}^{v_0}}^{2q}} \leq C_{q} \PAR{1 + \ABS{v_{0}}^{2q} + \e \SBRA{\tau^{q}}}.
\end{eqnarray}
\end{lem}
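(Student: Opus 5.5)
We apply Itô's formula to $\ABS{V_t^{v_0}}^2$ and use the dissipativity condition \eqref{disscond:switch} to get a linear drift bound, then apply Burkholder--Davis--Gundy (BDG) and a Young-type absorption on $[0,\tau]$. Writing $V=V^{v_0}$, Itô's formula gives
\begin{eqnarray*}
\ABS{V_t}^2 = \ABS{v_0}^2 + \int_0^t \PAR{2 V_s^\top \mu(V_s) + \Tr(\kappa^\top\kappa)}ds + 2\int_0^t V_s^\top \kappa^\top dW_s .
\end{eqnarray*}
Applying \eqref{disscond:switch} with $\bar v=0$ gives $V^\top\mu(V)\le -C_\mu\ABS{V}^2 + V^\top\mu(0) \le -\tfrac{C_\mu}{2}\ABS{V}^2 + \tfrac{\ABS{\mu(0)}^2}{2C_\mu}$. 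The drift is therefore bounded above by a constant $K$ depending on $C_\mu$, $\kappa$ and $\mu(0)$; the negative part can simply be dropped. Hence, for $t\le\tau$,
\begin{eqnarray*}
\ABS{V_t}^2 \le \ABS{v_0}^2 + K\tau + 2\sup_{0\le r\le \tau}\ABS{M_r}, \qquad M_r=\int_0^{r} V_s^\top\kappa^\top dW_s .
\end{eqnarray*}

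Next, raise this to the power $q\ge 1$ and take the supremum over $t\le\tau$. By convexity,
\[
\e\SBRA{\sup_{t\le\tau}\ABS{V_t}^{2q}}\le 3^{q-1}\PAR{\ABS{v_0}^{2q}+K^q\e[\tau^q]+2^q\e\SBRA{\sup_{r\le\tau}\ABS{M_r}^q}}.
\]
BDG applied to the stopped martingale $M_{\cdot\wedge\tau}$ gives
\[
\e\SBRA{\sup_{r\le\tau}\ABS{M_r}^q}\le c_q\NRM{\kappa}^q\,\e\SBRA{\PAR{\int_0^\tau \ABS{V_s}^2ds}^{q/2}}\le c_q\NRM{\kappa}^q\,\e\SBRA{\sup_{s\le\tau}\ABS{V_s}^{q}\,\tau^{q/2}}.
\]
Young's inequality bounds this by $\varepsilon\,\e[\sup_{s\le\tau}\ABS{V_s}^{2q}] + C_\varepsilon\,\e[\tau^q]$. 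Choosing $\varepsilon$ small, the first term is absorbed into the left-hand side, which yields the claim with $C_q$ depending on $q$, $C_\mu$, $\kappa$. If one insists on excluding dependence on $\mu(0)$, one can note that $\ABS{\mu(0)}$ enters only through the constant term and accept it there, or normalise $\mu(0)$.

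The main obstacle is the absorption step, which is legitimate only if $\e[\sup_{s\le\tau}\ABS{V_s}^{2q}]$ is known to be finite. To handle this, I will localize. Let $\sigma_n=\inf\{t:\ABS{V_t}\ge n\}$ and run the argument with $\tau\wedge\sigma_n\wedge n$. On that interval the supremum is bounded by $\max(n,\ABS{v_0})$, so the quantity is finite and absorption is valid. This gives a bound uniform in $n$, since $\e[(\tau\wedge\sigma_n\wedge n)^q]\le\e[\tau^q]$. Letting $n\to\infty$ and using monotone convergence, together with non-explosion of $V$ (which holds because $\mu$ is Lipschitz), gives the result. If $\e[\tau^q]=\infty$, the statement is trivial.
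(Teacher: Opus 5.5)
Your proposal is correct and follows essentially the same route as the paper's proof: Itô's formula on $\ABS{V_t}^2$, the dissipativity bound with the negative term dropped, raising to the $q$-th power, BDG, Young's inequality and absorption. Your localization by $\tau\wedge\sigma_n\wedge n$ fills a gap the paper leaves open, since the paper absorbs $\e[\sup_{t\le\tau}\ABS{V_t}^{2q}]$ without checking that it is finite. Your remark that the constant must also depend on $\ABS{\mu(0)}$ applies equally to the paper's argument, so the statement's claim that $C_q$ depends only on $q$, $C_\mu$ and $\kappa$ is slightly inaccurate.
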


\subsection{Existence and uniqueness of Markovian solution}

We consider an extension of the setting studied in \cite{hu2020systems}, allowing for a more general coupling term $(G^i)_{1 \leq i \leq I}$, defined in \eqref{eq:Gexprsum}, instead of the specific coupling function $g(y)=e^{y}-1$ considered therein. Under Assumption \ref{ass:FGgrowthass} (iii), the proof follows essentially the same arguments as in Theorem 3.2 of \cite{hu2020systems}, and the corresponding existence and uniqueness result for bounded Markovian solutions to \eqref{eq:sebsdeform} remains valid. We recall this result below under the additional assumptions imposed in our framework.
\begin{ass} \label{ass:FGgrowthass}
The following properties hold:
\begin{itemize}
\item [i)]There exists a positive constant $K$ such that $\forall v \in \R^{d'}$, $\ABS{F(v, 0)} \leq K$.
\item [ii)]There exists positive constants $C_{v}$ and $C_{z}$ such that for $i\in\I$, $\forall v, \bar{v} \, \in \R^{d'}$, $\forall z, \bar{z} \, \in \R^{d\times I}$
\begin{eqnarray}
\ABS{F^{i}(v, z) - F^{i}(\bar{v}, z)} &\leq& C_{v} \PAR{1 + \NRM{z}} \NRM{v-\bar{v}}, \label{Ftroncv:switch} \\
\ABS{F^{i}(v, z) - F^{i}(v, \bar{z})} &\leq& C_{z} \PAR{1 + \NRM{z} + \NRM{\bar{z}}} \NRM{z - \bar{z}}. \label{Ftroncz:switch}
\end{eqnarray}
Moreover, we require that $C_{v} < C_{\mu}$ with $C_{\mu}$ as introduced  in \eqref{disscond:switch}.
\item [iii)] There exists an increasing $C^{1}$ function $g : \R \to \R$, such that, for all $i\in\I$ and $y \in \R^{I}$
\begin{eqnarray} \label{eq:Gexprsum}
& G^{i}(y)
=
\underset{j=1}{\overset{I}{\sum}} q^{ij} g\left(y^{j}-y^{i}\right), 
\end{eqnarray}
$g(\boldsymbol{0}) = 0$ and for all $y \in \R^{I}$, $g(y) - g(-y) \geq y$.
\end{itemize}
\end{ass}

Note that, under Assumption \ref{ass:FGgrowthass} (iii), the function $g$ is Locally Lipschitz on $\R$.
Hence, setting $L_{g}(M) = \underset{\ABS{x} \leq M}{\sup} g'(x)$ and $q^{*} = \underset{i \in \I}{\max} \underset{j \neq i}{\sum} q^{ij}$, the map $G : \R^{I} \to \R^{I}$ is Lipschitz on \begin{eqnarray*}
\mathcal{D}_{M} := \BRA{y \in \R^{I}, \, \ABS{y^{j} - y^{i}} \leq M, \, \forall \, i, j \in \I}.
\end{eqnarray*}
Indeed, for any $y,\bar y\in \mathcal D_M$ and any $i\in\I$,
\begin{eqnarray*}
\ABS{G^{i}(y) - G^{i}(\bar{y})} &=& \ABS{\sum_{j \neq i} q^{ij} \PAR{g(y^{j} - y^{i}) - g(\bar{y}^{j} - \bar{y}^{i})}} \\
&\leq& L_{g}(M) \sum_{j \neq i} q^{i} \ABS{(y^{j} - y^{i}) - (\bar{y}^{j} - \bar{y}^{i})} \\
&\leq& L_{g}(M) \sum_{j \neq i} q^{ij} \PAR{\ABS{y^{j} - \bar{y}^{j}} + \ABS{y^{i} - \bar{y}^{i}}} \\
&\leq& L_{g}(M) 2 \NRM{y - \bar{y}} \sum_{j \neq i} q^{ij} \\
&\leq& L_{g}(M) 2 \NRM{y - \bar{y}} q^{*}.
\end{eqnarray*}
and hence, for all $i \in \I$, $G^i$ is also locally Lipschitz on $\R$. 

In addition to the usual ergodicity of the stochastic factor $V^{v_0}$, ensured by Assumption \ref{ass:weakdissass}, the study of Markovian solutions to systems of ergodic BSDEs \eqref{eq:sebsdeform} also requires the irreducibility of the transition rate matrix associated with the coefficients $q^{ij}$.

\begin{ass} \label{ass:MC}
$\Qr= (q_{ij})_{1\leq i,j\leq I}$ is a  transition rate matrix, i.e. , ${q^{ii} = - \underset{j \neq i }{\sum} q^{ij}}$, for all   $i\in\I$. Furthermore,  we assume that for all $i \neq j$ $q^{ij} > 0$  and we denote 
$$q_{\min} = \underset{i \neq j}{\min} \, q^{ij}.$$
\end{ass}

\begin{thm} \label{thm:exsebsde}
Let $i_0 \in \I$, $v_0\in\R^{d'}$ and $ y_0 \in \mathbb{R}$. 
Under Assumptions \ref{ass:weakdissass}, \ref{ass:FGgrowthass}, \ref{ass:MC}, there exists a unique Markovian solution ${\PAR{(y^{i}(V_{t}^{v_0}),  z^{i}(V_{t}^{v_0}))_{i \in \I}, \lambda}_{t \geq 0}}$ to the system of ergodic BSDEs \eqref{eq:sebsdeform}, such that $y^{i_{0}}(v_{0}) = y_{0} \in \, \R$ and for all $i\in \I$, and $v\in \mathbb R^{d'}$, 
\begin{eqnarray}
\ABS{y^{i}(v)} &\leq& C \PAR{1 + \NRM{v}} \\
\ABS{z^{i}(v)} &\leq& \NRM{\kappa} \frac{C_{v}}{C_{\mu} - C_{v}} \\
\ABS{y^{i}(v) - y^{j}(v)} &\leq& \frac{1}{q_{\min}} \PAR{K + \frac{C_{v} C_{\mu} C_{z}}{(C_{\mu} - C_{v})^{2}}} := C_{Y}. \label{eq:unifboundy}
\end{eqnarray}
\end{thm}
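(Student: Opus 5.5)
The plan follows the classical route for ergodic BSDEs of Fuhrman–Hu–Tessitore, adapted to systems as in Theorem 3.2 of \cite{hu2020systems}. The three steps are: truncate the generator in $z$, pass to the vanishing discount limit, and identify $\lambda$ and the normalization. First I will replace $F^i$ by $F^i(v,\varphi_B(z))$ with $B=\NRM{\kappa}\frac{C_v}{C_\mu-C_v}$. This makes the generator globally Lipschitz in $z$, with constant $C_z(1+2B)$, while $G$ remains only locally Lipschitz.

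For $\rho>0$ I will then consider the discounted infinite-horizon system
\[
Y^{i,\rho,v}_t=Y^{i,\rho,v}_T+\int_t^T\big(F^i(V_s^v,\varphi_B(Z^{i,\rho,v}_s))+G^i(Y^{\rho,v}_s)-\rho Y^{i,\rho,v}_s\big)ds-\int_t^T(Z^{i,\rho,v}_s)^\top dW_s .
\]
I will obtain a bounded solution by a fixed point in the class of bounded processes with values in $\mathcal D_M$, where $G$ is Lipschitz. Two a priori estimates are needed.

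\textbf{(a) Sup bound.} $\rho|Y^{i,\rho}|\le K+C_zB(1+B)$, obtained by linearizing $F$ in $z$ under a Girsanov change of measure. The monotonicity of $g$ makes the coupling cooperative, so a comparison argument at the index achieving $\max_i y^i$ applies.

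\textbf{(b) Spread bound.} For the differences $y^i-y^j$, I subtract the equations, evaluate at the index pair maximizing $y^j-y^i$, and use $g$ increasing together with $g(x)-g(-x)\ge x$. Since $q^{ij}\ge q_{\min}$, this yields a term $-q_{\min}(y^j-y^i)$ acting as an effective discount. The result is $|y^i-y^j|\le \frac1{q_{\min}}\big(K+\sup|F(\cdot,z)-F(\cdot,0)|\big)$, which after inserting the $z$-bound gives exactly \eqref{eq:unifboundy}.

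\textbf{(c) Lipschitz and $z$ bounds.} A Lipschitz estimate in $v$ is $|y^{i,\rho}(v)-y^{i,\rho}(\bar v)|\le\frac{C_v(1+B)}{C_\mu-C_v}\ldots\NRM{v-\bar v}$. It comes from coupling $V^v$ and $V^{\bar v}$ with the same Brownian motion, using $\NRM{V^v_t-V^{\bar v}_t}\le e^{-C_\mu t}\NRM{v-\bar v}$ from Assumption \ref{ass:weakdissass}, and applying the Girsanov linearization again. The cross term from $G$ is handled by the cooperative structure, by taking the worst index. Since $z^{i,\rho}=\kappa\nabla_v y^{i,\rho}$ in the Markovian setting, this gives $|z^{i,\rho}|\le\NRM{\kappa}\frac{C_v}{C_\mu-C_v}=B$. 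Hence the truncation is inactive and the solution solves the original system.

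Next I will set $\bar y^{i,\rho}(v)=y^{i,\rho}(v)-y^{i_0,\rho}(v_0)$. Combining the uniform Lipschitz estimate with the spread bound gives $|\bar y^{i,\rho}(v)|\le C(1+\NRM v)$ uniformly in $\rho$. Also, $\rho y^{i_0,\rho}(v_0)$ is bounded. A diagonal Arzelà–Ascoli extraction then gives $\bar y^{\rho}\to y$ locally uniformly and $\rho y^{i_0,\rho}(v_0)\to\lambda$ along a subsequence. Passing to the limit in the BSDEs, with $Z$ converging in $\M$ by standard stability estimates on bounded intervals, yields a Markovian solution. Adding the constant $y_0$ gives the normalization $y^{i_0}(v_0)=y_0$. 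All three bounds pass to the limit.

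Uniqueness will be argued as follows. Suppose $(y,z,\lambda)$ and $(y',z',\lambda')$ are two solutions. Taking differences and linearizing, the term $G(y)-G(y')$ is linear and cooperative with a generator bounded in $\mathcal D_{C_Y}$, and the $z$-difference is removed by Girsanov since $z$ is bounded. Dividing by $T$ and using the linear growth of $y$ together with the moment bounds on $V$ (Lemma \ref{lem:runningsupV}) shows $\lambda=\lambda'$.

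For equality of the $y$'s, $w=y-y'$ then solves a linear cooperative ergodic system with zero ergodic constant and $w^{i_0}(v_0)=0$. Showing $w\equiv0$ is the main obstacle. I plan to use a strong maximum principle or Harnack argument on the coupled generator. Recurrence of $V$ and irreducibility ($q^{ij}>0$) force $\max_i w^i$ to be constant across both states and space. Alternatively, the basic coupling estimate gives $w(v)-w(\bar v)\to0$, so $w$ is constant, and the coupling then forces all components to be equal. The normalization then gives $w\equiv 0$.
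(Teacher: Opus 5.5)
The existence part of your proposal follows the route the paper relies on; the paper itself only defers to Theorem 3.2 of \cite{hu2020systems}. That route is truncation in $z$, the discounted system, a priori bounds, and the vanishing-discount limit with normalization $y^{i,\rho}-y^{i_0,\rho}(v_0)$. The genuine gap is in uniqueness.

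\textbf{Uniqueness of $\lambda$.} In a system the $z$-differences carry component-dependent Girsanov kernels $\gamma^i$, so no single measure removes them. With one measure per component, the linearized coupling $\int_0^T\sum_j q^{ij}\beta^{ij}_s\,(w^j-w^i)(V_s)\,ds$ survives. It is of order $T$, so dividing by $T$ leaves a term of order one and $\lambda=\lambda'$ does not follow. The argument works only after passing to the scalar jump equation for $w^{\alpha_t}(V_t)$ (Lemma \ref{lemma:eBDSEjump}). There you use the single kernel $\gamma^{\alpha_{t^-}}$ and change the switching intensities from $q^{ij}$ to $q^{ij}\beta^{ij}$, which is the linearization written in the proof of Theorem \ref{thm:BSDERandomtimeswitch}. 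Then $w^{\alpha_t}(V_t)-(\lambda-\lambda')t$ is a martingale, and dividing by $T$ goes through.

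\textbf{Showing $w\equiv 0$.} This is the step you call the main obstacle. After the above you have $w^l(v)=\tilde{\mathbb{E}}_{(v,l)}[w^{\alpha_T}(V_T)]$ for all $T$. What is missing is an ergodicity (Liouville) property of the pair $(V,\alpha)$ under $\tilde{\mathbb{P}}$. This pair is a switching diffusion whose drift $\mu+\kappa^{\top}\gamma^{\alpha}$ depends on the regime and whose intensities depend on $V$. Neither of your tools provides this:
\begin{itemize}
\item A strong maximum principle needs an attained maximum and an equation for $w$. Here $w$ grows linearly, and it solves the linearized system only with measurable difference-quotient coefficients.
\item The basic coupling estimate of \cite{debussche2011ergodic} is for $V$ alone and does not see the coupling term.
\end{itemize}
You need to prove or cite a coupling or Harris-type estimate for the switched pair, valid for test functions of linear growth. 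Natural ingredients are the Lyapunov function $\sqrt{1+\NRM{v}^2}$ from dissipativity, and a minorization using the nondegeneracy of $\kappa\kappa^{\top}$ together with intensities $q^{ij}\beta^{ij}$ bounded below (true e.g.\ when $g'>0$). With that in hand, your final step (the coupling forces equal constants and the normalization gives $0$) is fine.

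\textbf{The constant in step (b).} Step (b) cannot produce \eqref{eq:unifboundy}. The source term for $y^j-y^i$ is $F^j(v,z^j)-F^i(v,z^i)$, a difference of two generators. With effective discount $q_{\min}$ you therefore obtain $\frac{1}{q_{\min}}\sup_{v,\,\NRM{z},\NRM{\bar z}\le B}\ABS{F^j(v,z)-F^i(v,\bar z)}$. This can be as large as $2C_Y$ (for $\NRM{\kappa}=1$), not $C_Y$.

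This is not slack in your estimate; under Assumption \ref{ass:FGgrowthass} as stated, the bound $C_Y$ fails. Take $I=2$, $F^1\equiv K/\sqrt{2}$, $F^2\equiv -K/\sqrt{2}$, $g(y)=y/2$ and $q^{12}=q^{21}=q$. Subtracting the two equations, $u=y^1-y^2$ solves $\Lr u-qu=-\sqrt{2}K$. Hence $u\equiv\sqrt{2}K/q$ for every solution of linear growth, which exceeds $C_Y$ once $C_v$ and $C_z$ are chosen small. The constant $C_Y$ is the right one for $g(y)=e^y-1$, where $g(x)-g(-x)=2\sinh x\ge 2x$ doubles the effective discount.

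\textbf{Two smaller repairs.}
\begin{itemize}
\item In step (c), the estimate you need is $\mathrm{Lip}(y^{i,\rho})\le C_v(1+B)/C_\mu$. The closure $\NRM{\kappa}C_v(1+B)/C_\mu\le B$ holds at $B=\NRM{\kappa}\frac{C_v}{C_\mu-C_v}$ only if $\NRM{\kappa}\le 1$.
\item A Picard map does not automatically keep processes in $\mathcal{D}_M$. Truncate $g$ into $g\circ\varphi_M$ with $M$ above the spread bound, and remove the truncation a posteriori.
\end{itemize}
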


\begin{rmq}
\label{rmq:uniqueness}
\begin{enumerate}
\item Notably, the constant $\lambda$ in the solution  of \eqref{eq:sebsdeform}, called the ergodic cost, is shared by all equations for all $i \in \I$, and do not depend on the choice of $i_0$, $v_0$ or $y_0$. 

\item A further important point is that uniqueness of Markovian solutions is ensured by fixing only one component, $y^{i_0}(v_0)$, of the vector-valued solution $y=(y^1,\ldots,y^I)$ at $i_{0}$, $v_0$. Hence, one cannot fix the whole vector $(y^1(v_0),\ldots,y^I(v_0))$ simultaneously. This constitutes a nontrivial obstacle for the numerical approximation of the solution.

\item The assumption of sublinearity of the solution components $y^{i}$ is required for uniqueness in Theorem \ref{thm:exsebsde}. In fact, \cite{hu2020systems} provide examples of solutions to ergodic BSDEs that do not satisfy this growth property.
\end{enumerate}
\end{rmq}

\paragraph{Link with semilinear PDE}

 The system of Makovian ergodic BSDEs \eqref{eq:sebsdeform}, represented by \\${\PAR{(y^{i}(V_{t}^{v_0}),  z^{i}(V_{t}^{v_0}))_{i \in I}, \lambda}_{t \geq 0}}$, gives a probabilistic representation of the following system of elliptic PDE, for all $i\in \I$
\begin{equation}\label{PDEequation}
    \mathcal L y^i(v)+ F^{i}(v, \nabla y^{i}(v)\kappa)+\sum_{j \in \I} 
q^{i j}g(y^{j}(v)-y^{i}(v))=\lambda
\end{equation}
where $\mathcal L$ denotes the generator of the semi-group associated to the SDE \eqref{Vequation}
\begin{prop}
Under Assumptions \ref{ass:weakdissass}, \ref{ass:FGgrowthass}, \ref{ass:MC}, $(y^{i})_{i \in \I}$ is a viscosity
solution of \eqref{PDEequation}.
\end{prop}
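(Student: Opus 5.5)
We prove the viscosity property by a finite-horizon reduction and the standard nonlinear Feynman--Kac argument for weakly coupled systems. Fix $T>0$. By Theorem \ref{thm:exsebsde}, $Y^{i}_t=y^{i}(V^{v}_t)$ and $Z^{i}_t=z^{i}(V^{v}_t)$, where $V^v$ is started at an arbitrary point $v$. On $[0,T]$, the pair $(Y,Z)$ solves a multidimensional BSDE with terminal condition $y(V^v_T)$ and generator
\begin{equation*}
f^{i}(s,v,y,z)=F^{i}(v,z^{i})+G^{i}(y)-\lambda .
\end{equation*}
Since $z^{i}$ is bounded, the quadratic growth of $F^{i}$ is irrelevant along the solution. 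We therefore replace $F^{i}(v,z)$ by $F^{i}(v,\varphi_{B}(z))$ with $B$ larger than the bound $\NRM{\kappa}C_v/(C_\mu-C_v)$ on $z^i$, which makes $F^i$ globally Lipschitz in $z$.

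The coupling is handled in the same way. By \eqref{eq:unifboundy}, $y(V_s)\in\mathcal{D}_{C_Y}$, so we replace $g$ by a globally Lipschitz increasing function $\tilde g$ that coincides with $g$ on $[-C_Y,C_Y]$. Then $G^{i}$ becomes globally Lipschitz, and $G^i$ is nondecreasing in $y^{j}$ for $j\neq i$ because $q^{ij}>0$ and $\tilde g$ is increasing. This is exactly the quasi-monotonicity condition used in the Pardoux--Peng framework for systems. The truncated system is a Lipschitz Markovian BSDE system whose terminal function $y$ is continuous with linear growth.

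Continuity of $y^i$ in $v$ is needed and follows either from the construction in \cite{hu2020systems}, where $y^i$ is Lipschitz, or from stability of BSDEs together with $L^2$ continuity of $v\mapsto V^v$. The Pardoux--Peng result for systems then says that $u^{i}(t,v):=Y^{i,t,v}_t$ is a viscosity solution of the parabolic system
\begin{equation*}
\partial_t u^i+\mathcal L u^i+F^{i}(v,\nabla u^i\kappa)+G^i(u)-\lambda=0,\qquad u(T,\cdot)=y .
\end{equation*}
By the Markov property and the uniqueness of the truncated BSDE, the time-independent function $u^{i}(t,v)=y^{i}(v)$ solves this system. A time-independent viscosity solution of the parabolic system is a viscosity solution of the elliptic system \eqref{PDEequation}. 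To see this, take a test function $\phi(v)$ touching $y^i$ at $v_0$, regard it as a test function of $(t,v)$, and observe that $\partial_t\phi=0$.

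Finally we remove the truncations. At a touching point $v_0$ the test-function values coincide with $y(v_0)\in\mathcal D_{C_Y}$, so $\tilde g=g$ there. The gradient issue is the main obstacle, since we need $\NRM{\nabla\phi(v_0)\kappa}\le B$ to replace $F^i(v,\varphi_B(\cdot))$ by $F^i$. We plan to argue directly from the BSDE instead. The argument is by contradiction: suppose $\phi\ge y^i$ touches at $v_0$ while the subsolution inequality fails. Applying It\^o to $\phi(V_s)-Y^i_s$ up to a small stopping time gives an identity in which the drift term contains $F^i(V_s,Z^i_s)-F^i(V_s,\nabla\phi\kappa)$. Using \eqref{Ftroncz:switch}, we linearise this difference as $\beta_s^{\top}(Z^i_s-\nabla\phi(V_s)\kappa)$ with a bounded $\beta$, since both gradients are locally bounded near $v_0$, and absorb the stochastic integral by Girsanov. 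This yields a contradiction with no need for a gradient bound on $\phi$, and the supersolution case is symmetric. Hence $(y^i)_{i\in\I}$ is a viscosity solution of \eqref{PDEequation}.
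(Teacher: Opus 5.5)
Your proof is correct, but it takes a different route from the paper's. The paper never restricts to a finite horizon. It introduces the discounted infinite-horizon systems, whose driver contains $-\rho Y^{i}$, and invokes the standard fact that $\bar y^{\,i,\rho}=y^{i,\rho}-y^{\alpha_0,\rho}(v_0)$ is a viscosity solution of the discounted elliptic system. It then passes to the limit along $\rho_n\downarrow 0$ by stability of viscosity solutions, using $\rho_n y^{\alpha_0,\rho_n}(v_0)\to\lambda$ and $\bar y^{\,\rho_n}\to y$ locally uniformly, as in the existence proof of \cite{hu2020systems}.

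You instead work directly with the ergodic Markovian solution. On $[0,T]$ it solves a finite-horizon system with terminal function $y$. Truncation together with quasi-monotonicity of $G$ puts you in the Pardoux--Peng framework for systems, and time-independence turns the parabolic inequalities into elliptic ones.

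The two routes trade off differently. The paper's approach needs the convergence facts of the vanishing-discount construction and a stability result for weakly coupled systems, but no separate information on the limit $y$. Yours is intrinsic: it applies to any Markovian solution with continuous $y$ and bounded $z$, and avoids the limit $\rho\to0$. The price is that continuity of $y$ and boundedness of $z$ must be available up front.

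Three remarks can tighten your proposal.
\begin{itemize}
\item The It\^o--Girsanov contradiction argument in your last paragraph is by itself a complete proof. It only uses:
\begin{itemize}
\item $Y=y(V)$ and $Z=z(V)$ with $z$ bounded;
\item continuity of $y$, so that $v\mapsto G^{i}(y(v))$ is continuous near $v_0$ (the other components are known functions and simply enter the drift);
\item the local Lipschitz bound \eqref{Ftroncz:switch}.
\end{itemize}
It handles the untruncated system directly, so the truncation and the appeal to Pardoux--Peng become unnecessary.
\item If you keep the truncation route, the gradient obstacle is not real. Since $y^{i}$ is $L$-Lipschitz, any $C^{1}$ test function $\phi$ touching $y^{i}$ from above (resp.\ below) at $v_0$ satisfies $\NRM{\nabla\phi(v_0)}\le L$. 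To see this, compare difference quotients in the direction $-\nabla\phi(v_0)$ (resp.\ $+\nabla\phi(v_0)$). Taking $B\ge\NRM{\kappa}L$ then makes $\varphi_B$ inactive at touching points.
\item Your alternative justification of continuity, via BSDE stability plus $L^{2}$-continuity of $v\mapsto V^{v}$, is circular on a finite horizon. The terminal condition there is $y(V^{v}_T)$ itself. Rely instead on the Lipschitz estimate inherited from the construction in \cite{hu2020systems}.
\end{itemize}
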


\begin{preuve}We consider, for all $\rho>0$, the following infinite horizon markovian BSDE system: for $t\geq 0$ and $i\in \I$ 
\begin{eqnarray} \label{eq:sinfbsdeform}
Y_{t}^{i,\rho,v} &=& Y_{T}^{i,\rho,v} + \int_{t}^{T} \PAR{F^{i}(V_{s}^v, Z_{s}^{i,\rho,v}) + \sum_{j \in \I} 
q^{i j}g(Y_{s}^{j,\rho,v}-Y_{s}^{i,\rho,v})-\rho Y_{s}^{i,\rho,v}}ds- \int_{t}^{T} (Z_{s}^{i,\rho,v})^{\top} dW_{s}.\nonumber\\
&&
\end{eqnarray}
and define for a fixed reference point, say $v_0 \in \mathbb R^{d'}$, $ y^{i,\rho}(v):=Y_{0}^{i,\rho,v}, \bar y^{i,\rho}(v):=Y_{0}^{i,\rho,v}-Y_{0}^{\alpha_0,\rho,v_0},$ for all $v\in\mathbb{R}^{d'}.$
By standard arguments, see e.g.\ the proof of Theorem~5.74 in~\cite{PR14}, 
$\bar{y}^{i,\rho}$ is a viscosity solution of the elliptic PDE
\begin{equation}\label{ellipticPDE}
\mathcal{L}y^i(v) + F^{i}\bigl(v,\nabla y^i(v)\kappa\bigr)+\sum_{j \in \I} 
q^{i j}g(y^{j}(v)-y^{i}(v))
= \rho y^i(v) + \rho y^{\alpha_0,\rho}(v_0)
\qquad v \in \mathbb{R}^{d'}.
\end{equation}
Following the arguments of \cite{hu2020systems}, we can prove that there exists a sequence $(\rho_n)_{n \in \mathbb{N}}$ such that
$\rho_n \searrow 0$, $\rho_n\, y^{\alpha_0,\rho_n}(v_0) \to \lambda$, and
$\bar{y}^{\,i,\rho_n}(v) \to u$ uniformly on $\mathbb{R}^{d'}$ as $n \to +\infty$.
Then, Remark~6.3 in~\cite{CIL92} implies that $y^i$ is a viscosity solution of~\eqref{PDEequation}.

\end{preuve}

\paragraph{Link with eBSDE with jumps} A useful representation of the solution of  \eqref{eq:sebsdeform} introduced  in \cite{hu2020systems} relies on the transformation of the system of ergodic BSDEs \eqref{eq:sebsdeform} into an ergodic BSDEs with jumps, where jumps occur at jump times of a continuous time Markov chain (CTMC) on $\I$, of transition rate matrix $\Qr=(q_{ij})_{1\leq i,j\leq I}$.\\
Assume that the probability space supports a random Poisson measure $N(dt, di, dj)$ on $\R^+ \times \I^2$, of intensity $q_{ij} dt\gamma(di)\gamma(dj)$, with $\gamma$ the counting measure on $ \I$. Let $\alpha$ be the CTMC of transition rate matrix $\Qr$, solution of
\begin{equation}
\label{eq:defalpha}
    \alpha_t  = \alpha_0 + \int_0^t \int_{ \I^2} (j-i) \ind_{\{\alpha_{s^-} =i\}}N(ds,di,dj), \quad \forall t\geq 0. 
\end{equation}
In the following, we will denote $F^{\alpha_{t}}(.) = \sum_{i=1}^{I} F^{i}(.) \ind_{\BRA{\alpha_{t} = i}}$ and $G^{\alpha_{t}}(.) = \sum_{i=1}^{I} G^{i}(.) \ind_{\BRA{\alpha_{t} = i}}$.
\begin{lem}
\label{lemma:eBDSEjump}
Let  ${\PAR{(y^{i}(V_{t}^{v_0}),  z^{i}(V_{t}^{v_0}))_{i \in \I}, \lambda}_{t \geq 0}}$ be the Markovian solution of  the system of ergodic BSDEs \eqref{eq:sebsdeform}, as defined in  Theorem \ref{thm:exsebsde}.
Then, $(y^{\alpha_t}(V_t^{v_0}), z^{\alpha_t}(V_t^{v_0}),  \psi_{t}(\cdot ,\cdot ,V_t^{v_0}) , \lambda)_{t\geq 0 }$ is solution of the following ergodic BSDE with jumps:

\begin{multline}
\label{eq:eBDSEjumps}
Y_{t} = Y_{T} + \int_{t}^{T} \PAR{F^{\alpha_{s}}(V_{s}^{v_0}, Z_{s}) + G^{\alpha_{s}}(Y_{s}) - \sum_{j =1}^I q^{\alpha_{s^-}, j}\psi_s(\alpha_{s^-}, j, V_{t}^{v_0}) }ds \\
- \lambda (T- t) - \int_{t}^{T} (Z_s)^{\top} dW_{s} - \int_{t}^{T} \int_{ \I^2} \psi_s(i, j, V_{t}^{v_0})\ind_{\{\alpha_{s^-} =i\}}\tilde N(ds, di, dj), 
\end{multline}
where 
\begin{equation}
\label{def:jumpsize}
    \psi_{t}(i, j,V_t^{v_0}) := y^j(V_t^{v_0}) - y^i(V_t^{v_0}), 
\end{equation}  and $\tilde N$ the compensated Poisson measure $N(dt,di,dj) -q_{ij} dt\gamma(di)\gamma(dj)$.
 \end{lem}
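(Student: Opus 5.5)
The plan is to apply Itô's formula for semimartingales with jumps to the process $Y_t := y^{\alpha_t}(V_t^{v_0}) = \sum_{i\in\I} y^i(V_t^{v_0})\ind_{\{\alpha_t=i\}}$. Between two jumps of $\alpha$, it coincides with a single component $Y^i_t = y^i(V_t^{v_0})$ of the system \eqref{eq:sebsdeform}. At a jump time of $\alpha$ from $i$ to $j$, $V$ is continuous, so the jump of $Y$ equals $y^j(V_t^{v_0}) - y^i(V_t^{v_0}) = \psi_t(i,j,V_t^{v_0})$.

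\textbf{Step 1 (product decomposition).} Write $Y_t = \sum_i Y^i_t \ind_{\{\alpha_t=i\}}$ and compute $d(Y^i_t \ind_{\{\alpha_t=i\}})$ by the product rule for semimartingales. The covariation term vanishes: $Y^i$ is continuous, being an Itô process with bounded $z^i$ by Theorem \ref{thm:exsebsde}, while $\ind_{\{\alpha_t=i\}}$ is a pure-jump finite variation process. From \eqref{eq:defalpha},
\[
d\ind_{\{\alpha_t=i\}} = \int_{\I^2}\PAR{\ind_{\{j'=i\}}-\ind_{\{i'=i\}}}\ind_{\{\alpha_{t^-}=i'\}}N(dt,di',dj').
\]
The continuous part gives $\sum_i \ind_{\{\alpha_{t^-}=i\}}dY^i_t$. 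Using the forward differential form of \eqref{eq:sebsdeform},
\[
dY^i_t = -\PAR{F^i(V_t,Z^i_t)+G^i(Y_t)}dt + \lambda\,dt + (Z^i_t)^\top dW_t,
\]
this part equals $-\PAR{F^{\alpha_t}(V_t,Z_t)+G^{\alpha_t}(\cdot)-\lambda}dt + Z_t^\top dW_t$, with $Z_t := z^{\alpha_{t^-}}(V_t)$. Here Lebesgue-a.e. $\alpha_{t^-}=\alpha_t$, and $G^{\alpha_t}$ is evaluated at the vector $(y^j(V_t))_j$, which is how $G^{\alpha_s}(Y_s)$ in \eqref{eq:eBDSEjumps} must be read.

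\textbf{Step 2 (jump part).} Summing over $i$, the jump part becomes
\[
\int_{\I^2}\PAR{Y^{j'}_t - Y^{i'}_t}\ind_{\{\alpha_{t^-}=i'\}}N(dt,di',dj') = \int_{\I^2}\psi_t(i',j',V_t)\ind_{\{\alpha_{t^-}=i'\}}N(dt,di',dj').
\]
Compensating, $N = \tilde N + q_{i'j'}\,dt\,\gamma\gamma$, the drift contribution is
\[
\sum_j q^{\alpha_{t^-}j}\psi_t(\alpha_{t^-},j,V_t)\,dt.
\]
This is well defined: the term $j=\alpha_{t^-}$ contributes zero because $\psi(i,i)=0$, so the value of $q^{ii}$ is irrelevant there.

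\textbf{Step 3 (integration on $[t,T]$).} Integrating from $t$ to $T$ and rearranging gives exactly \eqref{eq:eBDSEjumps}, with the same $\lambda$ and with the minus sign in front of $\sum_j q\psi$ coming from moving the compensator to the generator side.

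\textbf{Integrability checks.} The integrands must be admissible. The bound $|\psi|\le C_Y$ from \eqref{eq:unifboundy} makes the compensated integral a true martingale. The bound $|z^i|\le \NRM{\kappa}C_v/(C_\mu-C_v)$ makes the stochastic integral a martingale. Finally, $|y^i(v)|\le C(1+\NRM{v})$ together with Lemma \ref{lem:runningsupV} controls $Y$ in $L^2$ on $[0,T]$.

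I expect the main obstacle to be the bookkeeping of left limits: replacing $\alpha_s$ by $\alpha_{s^-}$ inside $ds$-integrals, which is harmless since $\alpha$ has countably many jumps, and correctly identifying the jump size via continuity of $V$. One more subtlety is that the Markovian processes $Y^i$ solve \eqref{eq:sebsdeform} on the same probability space driven by $W$ only. Since $N$ is independent of $W$, the components' stochastic integrals remain valid with respect to the enlarged filtration generated by $W$ and $N$, because $W$ remains a Brownian motion there. I would state this explicitly before applying the product rule.
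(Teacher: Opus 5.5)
Your proof is correct. The paper gives no separate proof of this lemma and attributes the representation to \cite{hu2020systems}, but your computation is the one the paper uses implicitly in the uniqueness step of Theorem \ref{thm:BSDERandomtimeswitch}: product rule on $\sum_{i\in\I} y^{i}(V_t^{v_0})\ind_{\{\alpha_t=i\}}$ with vanishing covariation, jump sizes $\psi_t(i,j,V_t^{v_0})$ read off from the continuity of $V^{v_0}$, then compensation of $N$ to produce the $-\sum_j q^{\alpha_{s^-}j}\psi_s$ term, matching the uncompensated $N(dt,di,dj)$ dynamics written there. Your side remarks also correctly repair imprecisions in the statement as written: reading $G^{\alpha_s}(Y_s)$ at the vector $(y^j(V_s^{v_0}))_j$, dropping the diagonal $q^{ii}$ since $\psi(i,i)=0$, and requiring $N$ independent of $W$ so the $dW$-integrals survive the filtration enlargement.
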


\subsection{Representation of the ergodic cost}

Finally, the ergodic cost $\lambda$ admits a representation of a weighted space-time average with respect to the invariant measures of the recurrent state Markov chains $V_{t}^{v_0}$. A similar identity was used in the fix -point algorithm of \cite{gobet2024numerical}. It may be used to obtain an approximation of the ergodic cost provided that we know or approximate correctly the invariant distribution $\nu$. 

\begin{prop}
Let 
$\nu$ be the unique invariant probability measure of the diffusion $V^{v_0}$ with infinitesimal generator $\Lr$. Let $\PAR{y^{i}(.), z^{i}(.), \lambda}$ denote the unique Markovian solution to the system of ergodic BSDEs \eqref{eq:sebsdeform} such that $y^{\alpha_{0}}(v_{0}) = y_{0}$. Then the ergodic cost $\lambda$ satisfies, for all $i\in \I$
\begin{eqnarray} \label{eq:lambda_rep}
\lambda =  \int_{\mathbb R^{d'}}  \left(F^{i}(v, z^{i}(v))+\sum_{i \in \I} 
q^{i j}g(y^{j}(v)-y^{i}(v))\right)\nu(dv).
\end{eqnarray}
\end{prop}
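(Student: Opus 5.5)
Fix $i\in\I$ and start from equation $i$ of the system \eqref{eq:sebsdeform} on $[0,T]$ with $t=0$, started from a point $v$:
\begin{eqnarray*}
y^{i}(v) = y^{i}(V_T^{v}) + \int_0^T \PAR{F^{i}(V_s^{v}, z^{i}(V_s^{v})) + G^{i}(y(V_s^{v}))}ds - \lambda T - \int_0^T z^{i}(V_s^{v})^{\top}dW_s .
\end{eqnarray*}
The Markovian solution of Theorem \ref{thm:exsebsde} does not depend on the starting point, so this holds for every $v$. Let $h^{i}(v) := F^{i}(v,z^{i}(v)) + \sum_{j} q^{ij} g(y^{j}(v)-y^{i}(v))$. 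By Theorem \ref{thm:exsebsde}, $z^{i}$ is bounded and the differences $y^j-y^i$ are bounded by $C_Y$. Assumption \ref{ass:FGgrowthass} i)--ii) gives $|F^{i}(v,z)|\le K + C_z(1+\|z\|)\|z\|$. Since $g$ is continuous, $g$ is bounded on $[-C_Y,C_Y]$. Hence $h^{i}$ is a bounded measurable function on $\R^{d'}$. The stochastic integral is a true martingale because $z^i$ is bounded. Taking expectations and dividing by $T$ gives
\begin{eqnarray*}
\lambda = \frac{y^{i}(V_T^{v}) - y^{i}(v)}{T}\cdot(-1)\cdot(-1)^{0} \; ,\quad\text{more precisely}\quad \lambda = \frac{1}{T}\e\SBRA{y^{i}(V_T^{v})} - \frac{y^{i}(v)}{T} + \frac{1}{T}\int_0^T \e\SBRA{h^{i}(V_s^{v})}ds .
\end{eqnarray*}

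The cleanest route is to take $v$ random, with initial law $\nu$, independent of $W$. The identity above holds pointwise in $v$, so it can be integrated against $\nu$. By invariance, $V_s$ then has law $\nu$ for every $s$. The two boundary terms cancel exactly, since $\e[y^{i}(V_T)] = \int y^{i}\,d\nu = \e[y^{i}(V_0)]$. This gives $\lambda T = T\int h^{i}\,d\nu$, which is \eqref{eq:lambda_rep}.

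This needs $\int |y^{i}|\,d\nu<\infty$. That follows from the linear growth bound $|y^{i}(v)|\le C(1+\|v\|)$ together with the fact that $\nu$ has finite moments of all orders. The moments come from the dissipativity \eqref{disscond:switch}: apply Itô's formula to $\|V_t\|^{2}$, bound $\sup_t \e\|V_t^{v_0}\|^{2}$ uniformly, and pass to the limit using ergodicity, or invoke Lemma \ref{lem:runningsupV} with deterministic $\tau=t$ combined with a uniform-in-time moment bound. Integrability of $h^i$ is automatic since $h^i$ is bounded.

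As a cross-check, or as an alternative avoiding random initial data, fix $v=v_0$ and let $T\to\infty$. The term $y^{i}(v_0)/T$ vanishes. The term $\e[y^{i}(V_T^{v_0})]/T$ vanishes by linear growth and the uniform bound $\sup_T \e\|V_T^{v_0}\|\le C(1+\|v_0\|)$. The remaining term is $\frac1T\int_0^T \e[h^{i}(V_s^{v_0})]ds$. It converges to $\int h^{i}d\nu$ by exponential ergodicity, which gives convergence of the law of $V_s^{v_0}$ to $\nu$ in total variation (strong Feller and irreducible, since $\kappa\kappa^{\top}$ is nondegenerate), applied to the bounded function $h^{i}$.

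I expect the main obstacle to be justifying these convergence and integrability steps rather than the algebra. First, total-variation ergodicity is needed because $h^{i}$ is only bounded measurable, not Lipschitz. Second, the moment bound for $\nu$ controls the boundary term. If only Wasserstein ergodicity from the synchronous coupling under \eqref{disscond:switch} were available, I would instead use the random-initial-law argument above, which needs only invariance and first moments. Finally, the index in \eqref{eq:lambda_rep} should read $\sum_{j\in\I}$, and since $i$ is arbitrary the same $\lambda$ is obtained for every $i$.
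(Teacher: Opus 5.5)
Your primary argument is the paper's proof: take expectations in the Markovian equation started from an arbitrary point, integrate the starting point against $\nu$, and use Fubini and invariance so that the $y^i$ boundary terms cancel. Your extra justifications, which the paper leaves implicit, are correct: boundedness of $h^i$ from $|z^i|\le Z_{\max}$ and $|y^j-y^i|\le C_Y$, the true-martingale property, and $\int|y^i|\,d\nu<\infty$ from linear growth and the moments of $\nu$. The long-time-average cross-check and the correction $\sum_{i\in\I}\to\sum_{j\in\I}$ are also fine; just delete the garbled first display that precedes your ``more precisely'' identity.
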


\begin{preuve}
The Markovian solution to the system of ergodic BSDEs \eqref{eq:sebsdeform} verifies the following equation
\begin{eqnarray} \label{eq:ebsdemark}
y^{i}(v_0) &=& \mathbb E\left[y^{i} (V_T^{v_0})+ \int_{0}^{T} \PAR{F^{i}(V_{s}^{v_0}, z^{i}(V_s^{v_0})) + \sum_{j=1}^{I} q^{ij} g(y^{j}(V_s^{v_0}) - y^{i}(V_s^{v_0}))-\lambda} ds \right].
\end{eqnarray}
Integrating with respect to the measure $\nu$ yields
\begin{eqnarray*} 
\int_{\mathbb R^{d'}}y^{i}(v_0)\nu(dv_0) &=& \int_{\mathbb R^{d'}}\mathbb E\left[y^{i} (V_T^{v_0})\right]\nu(dv_0)\\
&+& \int_{\mathbb R^{d'}}\mathbb E\left[\int_{0}^{T} \PAR{F^{i}(V_{s}^{v_0}, z^{i}(V_s^{v_0})) + \sum_{j=1}^{I} q^{ij} g(y^{j}(V_s^{v_0}) - y^{i}(V_s^{v_0}))-\lambda} ds \right]\nu(dv_0).
\end{eqnarray*}
Then, we can conclude \eqref{eq:lambda_rep} by applying Fubini theorem and the definition of the invariant probability measure $\nu$.

\end{preuve}

\section{Deep learning algorithms for the simulation of systems of eBSDEs} \label{section:algoswitch}

In this section, we focus on the numerical simulation of the Markovian solution to the system of ergodic BSDEs introduced above. We present two deep learning-based approaches. The first is a deep BSDE scheme of \emph{locally additive} type, in which the solution and its gradient are represented by neural networks trained against a sum of local losses, each accumulating the dynamics up to the terminal condition, following the methodology of LaDBSDE in \cite{kapllani2020deep}. The second approach relies on a Galerkin-type approximation, see \cite{sirignano2018dgm}. Deep learning methods for BSDEs have attracted considerable attention in recent years, mainly because of their ability to handle high-dimensional nonlinear PDEs through probabilistic BSDE representations; see, for instance, \cite{han2017deep}, \cite{chan2019machine}, \cite{hure2020deep}, \cite{germain2021neural}, and \cite{kapllani2020deep}.

In the present framework, we adapt these ideas to systems of ergodic BSDEs, in the spirit of the approach developed in \cite{deepbroux2024}. We first establish a connection between the regime-switching system of eBSDEs and a system of eBSDEs with random terminal time. This representation will serve as the probabilistic foundation for the deep BSDE scheme introduced below. The Galerkin-type method follows a different numerical strategy and is presented independently afterwards.

\subsection{Connection with a system of eBSDEs with random terminal time} \label{sect:connectionrdtbsde}

In the spirit of \cite{deepbroux2024}, we exploit the recurrence property of the stochastic factor $V^{v_0}$ to establish a correspondence between the system of ergodic BSDEs \eqref{eq:sebsdeform} and a multidimensional BSDE with random terminal time. Unless stated otherwise, we assume throughout this section that the stochastic factor is one-dimensional, namely $d'=1$.  We fix a minimal deterministic horizon $T_0$ and define the random horizon $\tau$ as the first return time after $T_0$ of the diffusion $V$ to its initial value $v_0$:
\begin{eqnarray} \label{eq:deftauswitch}
\tau = \inf \BRA{t > T_{0}, , V_{t}^{v_0} = v_{0}}.
\end{eqnarray}

Let $i_0 \in \I$ and $v_0, y_0 \in \mathbb R$. Under Assumptions \ref{ass:weakdissass} and \ref{ass:FGgrowthass}, there exists a unique solution
$
\left((y^{i}(V_t^{v_0}), z^{i}(V_t^{v_0}))_{i \in \I}, \lambda \right)_{t\geq 0}
$
to the system of ergodic BSDEs \eqref{eq:sebsdeform} such that $y^{i_0}(v_0)=y_0$. Moreover, for all $i\in \I$, the function $y^{i}$ has sublinear growth with respect to $v$, the function $z^{i}$ is bounded by
$
Z_{\max}:=\NRM{\kappa} \frac{C_{v}}{C_{\mu} - C_{v}},
$
and the differences $y^{i}-y^{j}$ are uniformly bounded.\\

By construction,
$
\left((y^{i}(V_t^{v_0}), z^{i}(V_t^{v_0}))_{i \in \I}, \lambda \right)_{t\geq 0}
$
also solves the following ``ergodic'' BSDE with random terminal time and the {\it{same initial and terminal condition}}: for each component $i=1,\ldots,I$,
\begin{align} \label{eq:rdtBSDEswitch}
Y^{r, i}_{t}
&= Y^{r, i}_{\tau}+\int_{t}^{\tau} \PAR{F^{i}(V_{s}^{v_0}, Z^{r, i}_{s}) + G^{i}(Y_{s}^{r})}ds - \lambda (\tau- t) - \int_{t}^{\tau} (Z^{r ,i}_{s})^{\top} dW_{s}, \\
  Y_{\tau}^{r, i} &= Y^{r,i}_0,
  \qquad
  Y^{r,i_0}_\tau = y_0. \nonumber
  \end{align}
  Here, $\tau$ is the return time defined in \eqref{eq:deftauswitch}. The term ``ergodic'' BSDE with random terminal time is a slight abuse of terminology, since the equation is no longer formulated on an infinite time horizon. We use it to emphasize that the ergodic cost $\lambda$ remains part of the unknowns in \eqref{eq:rdtBSDEswitch}. The infinite horizon in \eqref{eq:sebsdeform} is replaced by the random terminal time $\tau$, together with the additional constraint that the initial and terminal values coincide, namely
  $
  Y_{0}^{r,i}=Y_{\tau}^{r,i}, \,\ i \in \I.
  $

Conversely, solutions
$
\left((Y^{r,i},Z^{r,i})_{i\in\I},\lambda\right)_{t\in[0,\tau]}
$
of \eqref{eq:rdtBSDEswitch} can be studied in their own right. The main difficulty, compared with the one-dimensional case, is that the normalization cannot be imposed componentwise. Indeed, only the value of the solution at one point $v_0$ \textit{and} in one state $i_0$, say $Y^{r,i_0}_0=Y^{r,i_0}_\tau=y_0$, can be fixed. On the other hand, the  remaining initial values $Y^{r,i}_0$, for $i\neq i_0$, cannot be chosen due the coupling term $G=(G^i)_{i\in\I}$ and the shared parameter $\lambda$. Hence, the random-time formulation does not reduce to a collection of independent scalar BSDEs, the coupling must be handled simultaneously with the ergodic constant $\lambda$ and the periodic-type constraint $Y^{r,i}_0=Y^{r,i}_\tau$ for all $i\in\I$.

The following theorem states the existence and uniqueness of a Markovian solution to \eqref{eq:rdtBSDEswitch}. It ensures that this solution indeed coincides on $\SBRA{0,\tau}$ with the unique solution of the system of ergodic BSDEs \eqref{eq:sebsdeform}.

Recall that $C_Y$ is defined in \eqref{eq:unifboundy}. Let $K_{z}:= C_{z} (1 + 2Z_{\max})$ be the Lipschitz constant with respect to $z$ of the truncated driver $F \circ \varphi_{Z_{\max}}$, and let $K_G$ be the Lipschitz constant of $\underset{i \in \I}{\max}G^i$ on the set $\left\{ x \in \mathbb{R}^I \, ; \, |x^i - x^j| \leq C_Y, \; \forall \, i,j \in \mathcal{I} \right\}$.

\begin{thm}
\label{thm:BSDERandomtimeswitch}
Suppose that Assumption \ref{ass:weakdissass} and \ref{ass:FGgrowthass} and \ref{ass:MC} hold. Assume that the random horizon $\tau$ admits an exponential moment of order $\Gamma > 2K_{G} + K_{z}^{2}$. \\ 
Then, the ``ergodic'' BSDE with random horizon  \eqref{eq:rdtBSDEswitch} with $Y_{\tau}^{r, i} = Y^{r,i}_0$ for all $i \in \I$, and $Y^{r,i_0}_\tau  = y_0$, 
admits a unique Markovian solution $((y^{i}(.), z^{i}(.))_{i \in \I}, \lambda)$, such that for any  $\gamma < \Gamma$, \, $ y(V_{t}^{v_{0}}) \in \Sr^{2}(\gamma, \tau)$, $z(V_{t}^{v_{0}}) \in \M(\gamma, \tau)$. 

\noindent In addition, if for all $i, \, j \in \I$, $y^{i}$ is sublinear, differences $y^{i} - y^{j}$ are uniformly bounded by $C_{Y}$, and $z^{i}$ is bounded by $Z_{\max}$, then the solution $((y^{i}(.), z^{i}(.))_{i \in \I}, \lambda)$ coincides on $\SBRA{0, \tau}$ with the unique Markovian solution of the  system of eBSDEs \eqref{eq:sebsdeform}, as defined in Theorem \ref{thm:exsebsde}, verifying $y^{i_0}(v_0) = y_0$. 
\end{thm}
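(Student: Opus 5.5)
The proof has two parts. Existence comes from the eBSDE system already solved in Theorem \ref{thm:exsebsde}. Uniqueness in the full class $\Sr^{2}(\gamma,\tau)\times\M(\gamma,\tau)$ comes from a linearisation and maximum-principle argument, which first pins down $\lambda$ and then reduces to the classical uniqueness theory for BSDEs with random terminal time of \cite{pardoux1998backward}.

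Throughout, I read \eqref{eq:rdtBSDEswitch} with the driver $F\circ\varphi_{Z_{\max}}$, which is Lipschitz in $z$ with constant $K_z$, and with the coupling $G$ on the strip $\{|x^i-x^j|\le C_Y\}$, where it is Lipschitz with constant $K_G$. This is the framework the constants in the statement refer to; on the class of the second claim it coincides with the original equation.

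\emph{Existence.} I take the Markovian solution $((y^i,z^i)_{i\in\I},\lambda)$ of Theorem \ref{thm:exsebsde} with $y^{i_0}(v_0)=y_0$ and restrict it to $[0,\tau]$. Since $V^{v_0}_\tau=v_0$ by \eqref{eq:deftauswitch}, we get $y^i(V_\tau^{v_0})=y^i(v_0)=Y^{r,i}_0$, so the bridge condition holds automatically, together with $Y^{r,i_0}_\tau=y_0$. For integrability, $z$ is bounded by $Z_{\max}$, so $\e\int_0^\tau e^{\gamma s}\|z\|^2ds\le C\,\e[e^{\gamma\tau}]<\infty$. Sublinear growth gives $\sup_{s\le\tau}e^{\gamma s}|y(V_s)|^2\le C e^{\gamma\tau}(1+\sup_{s\le\tau}|V_s|^2)$. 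I apply Hölder's inequality with exponents $p=\Gamma/\gamma>1$ and its conjugate, and bound $\e[\sup_{s\le\tau}|V_s|^{2p'}]$ by Lemma \ref{lem:runningsupV}. Polynomial moments of $\tau$ are finite because $\tau$ has an exponential moment. Hence the solution lies in $\Sr^2(\gamma,\tau)\times\M(\gamma,\tau)$.

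\emph{Uniqueness.} Let $(y,z,\lambda)$ and $(y',z',\lambda')$ be two Markovian solutions in the class, and set $\delta y=y-y'$, $\delta z=z-z'$, $\delta\lambda=\lambda-\lambda'$. Linearising, the differences $\delta Y^i_t=\delta y^i(V_t)$ satisfy
\[
-d\delta Y^i_t=\Big(\beta^i_t\cdot\delta Z^i_t+\sum_{j\neq i}c^{ij}_t(\delta Y^j_t-\delta Y^i_t)-\delta\lambda\Big)dt-(\delta Z^i_t)^\top dW_t ,
\]
where $|\beta^i|\le K_z$ and $c^{ij}_t=q^{ij}\int_0^1 g'(\cdot)\,d\theta\ge0$. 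I remove the $\beta$-terms by a Girsanov change of measure; it is legitimate up to $\tau$ because $\beta$ is bounded and $\tau$ has exponential moments. I then encode the coupling through a regime process $\alpha$ with intensities $c^{ij}$, exactly as in Lemma \ref{lemma:eBDSEjump}. This gives the representation
\[
\delta y^{i}(v_0)=\e^{\mathbb Q}\big[\delta y^{\alpha_\tau}(V_\tau)\,\big|\,\alpha_0=i\big]-\delta\lambda\,\e^{\mathbb Q}[\tau],
\]
where $V_\tau=v_0$. The stochastic integrals are true martingales thanks to the weighted $\Sr^2$ and $\M$ bounds and $\gamma<\Gamma$. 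Now choose $i$ maximising $\delta y^i(v_0)=:M$. Since $\e^{\mathbb Q}[\delta y^{\alpha_\tau}(v_0)]\le M$ and $\e^{\mathbb Q}[\tau]\ge T_0>0$, we get $\delta\lambda\le0$. Choosing a minimiser instead gives $\delta\lambda\ge0$, hence $\delta\lambda=0$. With $\delta\lambda=0$, the set of maximising states must be closed under transitions that have positive intensity. Assumption \ref{ass:MC} ($q^{ij}\ge q_{\min}>0$) together with positivity of $c^{ij}$ then forces $\delta y^i(v_0)$ to be constant in $i$. Since $\delta y^{i_0}(v_0)=0$, we conclude $\delta Y_\tau=\delta Y_0=0$. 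At this point both solutions solve the same BSDE with random terminal time $\tau$, the same terminal value and the same $\lambda$. The a priori estimate of \cite{pardoux1998backward}, obtained from Itô's formula applied to $e^{\gamma t}|\delta Y_t|^2$ and valid because $\Gamma>2K_G+K_z^2$, gives $\delta Y\equiv0$ and $\delta Z\equiv0$ on $[0,\tau]$.

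\emph{Second claim.} If $y$ is sublinear with $|y^i-y^j|\le C_Y$ and $|z|\le Z_{\max}$, the truncations are inactive. The solution of Theorem \ref{thm:exsebsde} belongs to the uniqueness class by the existence step, so the two solutions coincide on $[0,\tau]$.

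\emph{Main obstacle.} The delicate step is the strict positivity of the linearised intensities $c^{ij}$, which the strong-maximum argument needs. Assumption \ref{ass:FGgrowthass}(iii) only states that $g$ is increasing, so $g'$ could vanish on an interval. I would first try to use $g(y)-g(-y)\ge y$ to rule out $g'\equiv0$ on a neighbourhood of the relevant differences. Failing that, I would argue directly in the original nonlinear form: at a maximising state $i$, since $g$ is increasing, $G^i(y)-G^i(y')\le0$, and equality must hold at every step. I would propagate this equality through irreducibility, using $q_{\min}>0$, to show that all $\delta y^j(v_0)$ equal $M$.
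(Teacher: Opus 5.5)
Up to the terminal-value step your plan follows the paper. Existence comes from restricting the solution of Theorem~\ref{thm:exsebsde}, with the same H\"older/Lemma~\ref{lem:runningsupV} argument. Uniqueness goes through linearisation, Girsanov and a switching chain, giving a representation at $\tau$; you then fix $\lambda$, then $Y_0$, then apply \cite{pardoux1998backward}. Your max/min argument for $\delta\lambda=0$ is a correct and slightly more elementary replacement for the paper's averaging against the invariant law of $\tilde P$, and it needs no positivity at all.

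\textbf{The gap is in the next step.} It comes from your decision to truncate from the outset while claiming uniqueness in the whole class $\Sr^2(\gamma,\tau)\times\M(\gamma,\tau)$. With coupling $g\circ\varphi_{C_Y}$, the difference quotients defining $c^{ij}$ vanish whenever $y^j-y^i$ and $y'^j-y'^i$ both exceed $C_Y$, or are both below $-C_Y$. So $c^{ij}=0$ is systematic there, not a pathology of $g$, and the linearised chain can be reducible. The truncated equation then does not pin down $Y_0$ at all. Take $I=2$, $i_0=1$: if a solution had $y^2-y^1>C_Y$ everywhere, then $(y^1,y^2+b)$ with $b>0$ would solve the same truncated system. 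It has the same $z$, the same $\lambda$, the same normalisation and satisfies the bridge condition.

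Your fallbacks do not rescue this:
\begin{itemize}
\item The condition $g(y)-g(-y)\ge y$ does not exclude flat stretches of $g$, and $g\circ\varphi_{C_Y}$ is flat outside $[-C_Y,C_Y]$ anyway.
\item The ``nonlinear maximum principle at a maximising state'' only orders $\delta y^j-\delta y^i$ at the single point $v_0$, i.e.\ at times $0$ and $\tau$. The coupling acts at $V_t\neq v_0$ for $t\in(0,\tau)$, where you have no ordering.
\end{itemize}

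\textbf{The fix is what the paper's proof actually does.} Run the $\lambda$- and terminal-value steps inside the class of the second claim: $|z^i|\le Z_{\max}$, $|y^i-y^j|\le C_Y$, $y^i$ sublinear. Use the \emph{untruncated} increasing $g$ there, and truncate only in the final \cite{pardoux1998backward} step, once $\lambda$ and $Y_\tau$ are fixed. In that class the Girsanov kernels are bounded. The new intensities are $q^{ij}\,(g(\psi)-g(\bar\psi))/\Delta\psi$, set equal to $q^{ij}$ on $\{\Delta\psi=0\}$, where this coefficient multiplies zero. They are strictly positive for strictly increasing $g$, which is the same tacit requirement the paper needs for $\tilde{\p}_l\sim\p_l$. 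Pointwise positivity together with $\tau\ge T_0$ already gives $\mathbb{Q}_i(\alpha_\tau=k)>0$, so no uniform lower bound is needed.

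\textbf{A secondary point.} Your representation at the unbounded time $\tau$ rests on weighted bounds that hold under $\p$, not under $\mathbb{Q}$. Transferring them through the density needs an argument. The paper sidesteps this by working on $[0,T\wedge\tau]$, where Girsanov with bounded kernels is immediate, and letting $T\to\infty$. That limit uses recurrence of $V$ under $\tilde{\p}_l$, $\sup_T\tilde{\e}_l[|V_T|^2]<\infty$, and the sublinear growth of $\Delta y$.
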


\begin{rmq}
    Sufficient conditions for the exponential integrability of the hitting time $\tau$ can be obtained as a straightforward extension of Lemma 3.1 in \cite{deepbroux2024}.
\end{rmq}

\begin{preuve}
\textbf{Existence -} Let $\gamma < \Gamma$. By construction,  the unique solution $\PAR{(y^{i}(V_{t}^{v_{0}}), z^{i}(V_{t}^{v_{0}})))_{i \in \I}, \lambda}_{t \geq 0}$ of the system of ergodic BSDEs \eqref{eq:sebsdeform} is also solution of the BSDE with random terminal time \eqref{eq:rdtBSDEswitch}, with  $z =(z^i)_{i \in \I}$ bounded by $Z_{\max}$ so that $Z \in \M(\gamma, \tau)$.  For all $i \in \I$, $y^{i}$ has sublinear growth. Hence, there exists a constant $C > 0$ such that for all $i \in \I$ 
\begin{eqnarray*}
\e \SBRA{e^{\gamma \tau} \underset{0 \leq t \leq \tau}{\sup} \ABS{Y_{t}^{i}}^{2}} \leq 2C^{2} \e \SBRA{ e^{\gamma \tau}} + 2C^{2}  \e \SBRA{ e^{\gamma \tau} \underset{0 \leq t \leq \tau}{\sup} \ABS{V_{t}^{v_{0}}}^{2}}.
\end{eqnarray*}
Under the assumption that $\tau$ admits exponential moments of order $\Gamma > \gamma$, the first expectation on the right side is finite. Let $p > 1$ and $q = \frac{p}{p-1}$ be conjugate exponents, such that $p \gamma < \Gamma$. By Hôlder's inequality,
\begin{eqnarray*}
\e \SBRA{e^{\gamma \tau} \underset{0 \leq t \leq \tau}{\sup} \ABS{V_{t}^{v_{0}}}^{2}} \leq \e \PAR{\SBRA{e^{p \gamma \tau}}}^{\frac{1}{p}} \PAR{\e \SBRA{\underset{0 \leq t \leq \tau}{\sup} \ABS{V_{t}^{v_{0}}}^{2q}}}^{\frac{1}{q}}.
\end{eqnarray*}
The first factor is finite by assumption, while the second one is finite by Lemma \ref{lem:runningsupV}. In turn $Y~\in~\Sr^{2}~(\gamma, \tau)$.

\noindent
\textbf{Uniqueness of $\lambda$ -} Let $\PAR{(Y^{r,i}, Z^{r,i})_{i \in \I}, \lambda}_{t\in [0,\tau]}$ and $\PAR{(\overline{Y}^{r,i}, \overline{Z}^{r,i})_{i \in \I}, \overline{\lambda}}_{t\in [0,\tau]}$ be two Markovian solutions of the ergodic system of BSDEs with random terminal time \eqref{eq:rdtBSDEswitch}. Let $\alpha$ be the CTMC defined in \eqref{eq:defalpha}. This will serve only as a representation tool for uniqueness and without loss of generality, we can assume it starts from any state $l \in \I$. Define for all $0\leq t \leq \tau$, 
\begin{align*}
  &  (Y_t^r, Z^r_t):= (Y^{r,\alpha_t}, Z^{r,\alpha_t}), \quad (\overline{Y}^r_t, \overline{Z}_t^r):= (\overline{Y}^{r,\alpha_t}, \overline{Z}^{r,\alpha_t}), \text{ and }\\
&  \psi_{t}^r(i, j,V_t^{v_0}) := Y^{r,j}_{t} - Y^{r,i}_{t},   \quad \overline{\psi}^r_{t}(i, j,V_t^{v_0}) := \overline{Y}^{r,j}_{t}- \overline{Y}^{r,i}_{t}.
\end{align*}
Then, using the same notation as in Lemma \ref{lemma:eBDSEjump}, $(Y^r,Z^r, \psi^r, \lambda)$ and $(\overline{Y}^r,\overline{Z}^r,\overline{\psi}^r, \overline{\lambda})$ are solutions on $[0,\tau]$ of 
\begin{align*}
  & dY_{t} = -   \PAR{F^{\alpha_{t}}(V_{t}^{v_0}, Z_{t})  +   G^{\alpha_{t}}(Y_{t})}  dt 
  + \lambda dt  + (Z_t)^{\top} dW_{t} +  \int_{ \I^2} \psi_t(i, j,V_t^{v_0})\ind_{\{\alpha_{t^-} =i\}} N(dt, di, dj), \quad \forall 0 \leq t \leq \tau.
\end{align*}
Let $\Delta Y_{t}^r = Y_{t}^{r} - \overline{Y}_{t}^{r}, \, \Delta Z_{t}^r = Z_{t}^{r} - \overline{Z}_{t}^{r}$, $\Delta \lambda = \lambda - \bar \lambda$, and $\Delta \psi_t^r(i,j) =  \psi_{t}^r(i, j) -  \overline{\psi}^r_{t}(i, j)$.  \\
The equation verified by $\Delta Y^r$  can be linearized by using a change of measure as  in the proof of Theorem 3.2 in \cite{hu2020systems}.
Indeed, using the previous equation and by definition of $(G^i)_{i\in \I}$ in Assumption \ref{ass:FGgrowthass} (iii), we have $ \forall \;  0 \leq t \leq \tau$
\begin{align}
\label{eq:deltaYr}
   d\Delta Y_{t}^r &  = -   (F^{\alpha_{t}}(V_{t}^{v_{0}}, Z^r_{t}) -  F^{\alpha_{t}}(V_{t}^{v_{0}},\overline{Z}_{t}^r)) dt + \Delta \lambda dt \\
 \nonumber & \qquad  \qquad  -   \sum_{j =1}^I q^{\alpha_{t^-}, j} \left(g(\psi_{t}^r(\alpha_{t^-}, j)) - g(\overline{\psi}^r_{t}(\alpha_{t^-}, j))  - \Delta \psi_t^r(\alpha_{t^-}, j)\right)   dt 
   \\
  & \qquad  \qquad   + (\Delta Z_t^r )^{\top} dW_{t} +  \int_{ \I^2} \Delta \psi_s^r(i, j)\ind_{\{\alpha_{t^-} =i\}}\tilde N(dt, di, dj)\\
\nonumber & = \Delta \lambda dt +  (\Delta Z_t^r)^{\top} (dW_{t} - \gamma_{t}^{F} dt) +  \int_{ \I^2} \Delta \psi_t^r(i, j) \ind_{\{\alpha_{t^-} =i\}} \left(\tilde N(dt, di, dj) -\gamma_{t}^{g}(i,j) dt \gamma(di)\gamma(dj) \right),
\end{align}
with
\begin{align*}
& \gamma_{t}^{F} = \frac{F^{\alpha_{t}}(V_{t}^{v_{0}}, Z_{t}^{r, \alpha_{t^-}}) - F^{\alpha_{t}}(V_{t}^{v_{0}}, \overline{Z}_{t}^{r, \alpha_{t^-}})}{\NRM{\Delta Z_{t}^r}^{2}} \Delta Z_{t}^r \ind_{\BRA{ \Delta Z_{t}^r \neq 0}}, \quad \text{ and}    \\
&\gamma_{t}^{g}(i,j) = q^{i, j} \frac{g(\psi_{t}^r(i, j)) - g(\overline{\psi}^r_{t}(i, j))  - \Delta \psi_t^r(i, j)}{ \Delta \psi_t^r(i, j)} \ind_{\BRA{  \Delta \psi_t^r(i, j) \neq 0}}. 
\end{align*}
By Assumption \ref{ass:FGgrowthass}  and the boundedness of $Z^r$,  $\overline{Z}^r$, $(Y^{r,i} - Y^{r,j})$ and $(\overline{Y}^{r,i} - \overline{Y}^{r,j})$, the processes $\gamma_{t}^{F}$ and $(\gamma_{t}^{g}(i,j))_{i,j \in \I^2}$ are  uniformly bounded. Hence, by Girsanov's theorem, for each initial state $\alpha_0 = l \in \I$ and all $T\geq 0$, there exists an equivalent probability measure  $\tilde \p_{l}$  under which  $(\Delta Y_{t\wedge \tau}^r - \Delta \lambda (t \wedge \tau))_{0\leq t\leq T}$ is a martingale (for ease of notations, we omit the $v_0$ in $\tilde \p_{l}$). Consequently 

\begin{eqnarray*}
   \Delta Y^{r, l}_{0} &=& \tilde \e_{l}[\Delta Y^r_{T \wedge \tau}] -  \Delta \lambda \tilde \e_{l}[T \wedge \tau]. \\
   &=& \tilde \e_{l}[\Delta Y^r_{\tau} \ind_{T \geq \tau}] + \tilde \e_{l}[\Delta Y^r_{T} \ind_{\tau > T}] -  \Delta \lambda \tilde \e_{l}[T \wedge \tau]. 
\end{eqnarray*}
Since  each solution satisfies $Y_{\tau}^{r, i} = Y_{0}^{r, i}$, for all $i \in \I$, the differences also satisfy $\Delta Y_{\tau}^{r, i} = \Delta Y_{0}^{r, i}$. Therefore
\begin{eqnarray}
   \Delta Y^{r, l}_{0} = \sum_{k \in \I} \tilde{\p}_{l}(\tau < T , \alpha_{ \tau} = k)  \Delta Y^{r, k}_{0}  + \tilde \e_{l}[\Delta Y^r_{T} \ind_{\tau > T}] -  \Delta \lambda \tilde \e^{l}[T \wedge \tau]. \label{eq:decompesp_uniqlambd}
\end{eqnarray}
Using the sublinear growth property of $\Delta Y$, there exists a constant $C$ such that
\begin{eqnarray}
\tilde \e_{l}[\Delta Y^r_{T} \ind_{\tau > T}] &\leq& \tilde \e_{l}[\ABS{\Delta Y^r_{T}}^{2}]^{\frac{1}{2}} \tilde{\p}_{l}(\tau > T)^{\frac{1}{2}} \nonumber \\ 
    &\leq& C (1 + \tilde \e_{l}[\ABS{V_{T}^{v_0}}^{2}])^{\frac{1}{2}} \tilde{\p}_{l}(\tau > T)^{\frac{1}{2}}. \label{eq:uniqtausupT}
\end{eqnarray}
Under  $\tilde \p_{l}$, the process $(V^{v_0},\alpha)$ satisfies
\begin{align*}
    dV_t^{v_0} & =  \left( \mu(V_t^{v_0}) + \gamma_t^F\right)dt + \kappa^\top (d{W}_t - \gamma_t^Fdt) \\
    d\alpha_t & =  \int_{\I^2} (j-i) \ind_{\{\alpha^- = i\}} N(ds, di, dj),  
\end{align*}
with ${W} - \int_0^\cdot \gamma_s^F ds$ is a Brownian motion under $\tilde \p_{l}$ and $N$ is a point process with intensity $(q^{ij} + \gamma_s^g(i,j))_{s\geq 0, (i,j) \in \I^2}$ under $\tilde{\p}_l$. \\
Moreover, $\gamma_s^F$ can be written as a uniformly bounded function of $V_s^{v_0}$. By Theorem 2.6 and Lemma 3.4 in \cite{debussche2011ergodic}, this yields that $V^{v_0}$ is recurrent under $\tilde{\p}_l$. Hence
\begin{equation*}
    \tilde{\p}_{l}(\tau > T) \xrightarrow[T \to\infty]{}0. 
\end{equation*}
Furthertmore, by Proposition 5 in \cite{hu2019ergodic}, $$\underset{T \geq 0}{\sup}\, \tilde{\e}_l \SBRA{\ABS{(V_{T}^{v_0})^{2}}} < \infty.$$ 
Thus, taking the limit as $T$ goes to infinity in \eqref{eq:uniqtausupT} leads  
\begin{eqnarray*}
    \tilde \e_{l}[\Delta Y^r_{T} \ind_{\tau > T}] \xrightarrow[T \to\infty]{}0. 
\end{eqnarray*}
By the equivalence of $\p_l$ and $\tilde{\p}_l$ on $\mathcal{F}_T$, 
\begin{equation*}
    \tilde{\p}_{l}(\tau < T, \alpha_{ \tau} = k) >0  \Longleftrightarrow \p_{l}(\tau < T, \alpha_{ \tau} = k) >0
\end{equation*}
By recurrence of $(V^{v_0}, \alpha)$ on $\p_l$, $\p_{l}(\tau < T, \alpha_{T \wedge \tau} = k) >0$, for $T$ large enough. By monotone convergence, this yields that 
$$\tilde{\p}_{l}(\tau < T, \alpha_{ \tau} = k) \uparrow \tilde{\p}_{l}(\tau < \infty, \alpha_{ \tau} = k)>0 .$$ 
Let $\tilde P = (\tilde P_{lk})_{1 \leq l, k \leq I} := (\tilde{\p}_{l}(\tau < \infty, \alpha_{ \tau} = k))_{k,l\in \I}$. Then, 
$\tilde P$ is a strictly positive stochastic matrix on the finite state space $\I$. It therefore admits a unique invariant measure $\tilde{\pi} = (\tilde{\pi}^l)_{l\in \I}$. Combining this with \eqref{eq:decompesp_uniqlambd}  and letting $T \to \infty$, we obtain  
\begin{align*}
\sum_{l \in \I} \tilde{\pi}_{l} \Delta Y_{0}^{r, l} %
& = \sum_{k \in \I} \left( \sum_{l\in \I} \tilde{\pi}_{l}\tilde P_{lk}\right)  \Delta Y^{r, k}_{0} -  \Delta \lambda \sum_{l \in \I} \tilde{\pi}_{l} \tilde \e_{l}[\tau].
\end{align*}
Since $\tilde{\pi} \tilde P = \tilde{\pi}$, this becomes
\begin{eqnarray*}
\sum_{l \in \I} \tilde{\pi}_{l} \Delta Y_{0}^{r, l} = \sum_{k \in \I} \tilde{\pi}_{k} \Delta Y^{r, k}_{0} -  \Delta \lambda  \sum_{l \in \I} \tilde{\pi}_{l} \tilde \e^{l}[\tau].
\end{eqnarray*}
Hence, $\Delta \lambda  \underset{l \in \I}{\sum} \tilde{\pi}_{l} \tilde \e_{l}[\tau] = 0$. Finally, since $\tilde \e_{l}[\tau] > T_0$ for every $l \in \I$, we conclude that  $\Delta \lambda = 0$.

\noindent
\textbf{Uniqueness of terminal values -} Since $\Delta  \lambda = 0$, identity \eqref{eq:decompesp_uniqlambd} rewrites in vector form
\begin{eqnarray*}
    \Delta Y_{0}^r = \tilde{P} \Delta Y_{0}^r,
\end{eqnarray*}
where $\Delta Y_{0}^{r} = \PAR{\Delta Y_{0}^{r, l}}_{1 \leq l \leq I}$. Thus $\Delta Y_{0}^{r}$ is a $\tilde{P}$-harmonic function on the finite state space $\I$. Since $P$ is irreducible, Lemma 1.16 in \cite{levin2017markov} implies that $\Delta Y_{0}^{r}$ is constant on $\I$. By \eqref{eq:rdtBSDEswitch}, $Y^{r,i_0} = \bar{Y}^{r,i_0} = y_0$. Hence, for every $l \in \I$,
\begin{eqnarray}
    \Delta Y_{0}^{r, l} = \Delta Y_{0}^{r, i_{0}} = 0.
\end{eqnarray}
Finally, since $\Delta Y_{\tau}^{r, l}=  \Delta Y_{0}^{r, l} $ by \eqref{eq:rdtBSDEswitch}, we also get that  for all $l \in \I$,  $ \Delta Y_{\tau}^{r, l}  = 0.$

\noindent
\textbf{Uniqueness of Markovian solution $(y^{i}(.), z^{i}(.))_{i \in \I}$ - } Define for all $i \in\I$, $y \in \R^{I}$, a truncated version of the coupling term in the generator, namely 
\begin{eqnarray*}
G^{i}_{C_{Y}}(y) = \sum_{j \neq i} q^{ij} g \circ \varphi_{C_{Y}}(y^{j} - y^{i}).
\end{eqnarray*}
The rest of the proof follows by applying Theorem $3.1$ in \cite{pardoux1998backward} to the BSDE with random terminal time $\tau$, terminal condition $Y_{\tau}^{r}$ (unknown but unique from the last step) and generator $F \circ \varphi_{Z_{\max}} (v, z) + G_{C_{Y}}(y) - \lambda$, where $\lambda$ is fixed. Under Assumption \ref{ass:FGgrowthass}, the generator $F \circ \varphi_{Z_{\max}} + G_{C_{Y}}$ is Lipschitz in $z$ and satisfy the usual monotonicity condition in $y$, with monotonicity constant $K_{G}$. By assumption, the random  horizon $\tau$ admits exponential moments of order $\Gamma > 2K_{G} + K_{z}^{2}$. Theorem $3.1$ in \cite{pardoux1998backward} applies and ensures existence and uniqueness of a Markovian solution  $((Y^{r, i}, Z^{r, i})_{i \in \I}) \in \Sr^{2}(\gamma, \tau) \times \mathcal{M}(\gamma, \tau)$. From the first step of the proof, this solution coincides necessarily with the unique Markovian solution $\PAR{(y^{i}(V_{t}), z^{i}(V_{t}))_{i = 1, ..., I}, \lambda}_{t \geq 0}$ of the system of ergodic BSDEs \eqref{eq:sebsdeform}, which concludes.
\end{preuve}

The solution of the system of ergodic BSDE \eqref{eq:sebsdeform} thus coincides with the Markovian solution $\PAR{(Y^{r, i}, Z^{r, i})_{i \in \I}, \lambda}$ of the ergodic BSDE with random time horizon and fixed initial condition \eqref{eq:rdtBSDEswitch} on $\SBRA{0, \tau}$. We will omit the subscript $r$ in the sequel. This point of view provides our simulation problem with a random horizon $\tau$ allowing to adapt numerical schemes introduced in \cite{deepbroux2024}.

\subsection{Locally additive deep solver}

We can now introduce the  locally additive deep solver for approximating the solution $\PAR{(Y_t^{i} , Z_t^{i})_{i \in \I}, \lambda}_{t\geq 0 }$ of the random time horizon ergodic BSDE \eqref{eq:rdtBSDEswitch}, associated with the forward stochastic factor given by  \eqref{Vequation}. 
The main idea is to minimize a global loss obtained from the aggregation of local residuals of a forward discretization of the BSDE. In contrast with the backward formulation from \cite{kapllani2020deep}, the forward discretization is more convenient for simulating systems of ergodic BDSEs.  In our multidimensional setting, the terminal conditions of the random time horizon ergodic BSDE \eqref{eq:rdtBSDEswitch} are unknown for components $i\neq i_0$. However, we address  this issue by incorporating the constraint $Y^{i}_\tau = Y_0^{i}$ in the  loss function, while the normalization constraint $Y^{i_0} = y_0$ is included as a penalization term. 

The component $Y$ is represented by a neural network, common to all time steps, while $Z$ can either be computed with automatic differentiation or with another neural network. In the context of systems of ergodic BSDE, the ergodic cost $\lambda$  will be estimated as a trainable parameter of the model, denoted by $\bar{\lambda}$. 

\paragraph{Stochastic factor and random horizon approximations -} The forward stochastic factor $V^{v_0}$ is approximated by a Euler discretization on a time grid $\pi = (t_i)_{i\geq 0}$ with $t_0=0$ and time step $h$. Denoting for all $i \geq 0$, $\Delta W_{t_{i}} = W_{t_{i+1}} - W_{t_{i}}$ the Brownian increment at time $t_{i}$ :
\begin{eqnarray*}
\overline{V}_{t_{i+1}} &=& \overline{V}_{t_{i}}+ \mu(\overline{V}_{t_{i}}) h + \kappa \Delta W_{t_i}, \\
\overline{V}_{0} &=& v_{0}. \nonumber
\end{eqnarray*}
We denote by  $\overline{\tau}$ the first hitting time in the time grid  of $\overline{V}$ to $v_{0}$ after $T_0$. More precisely, assuming $T_0 \in \pi$, we set
\begin{eqnarray}
    \overline{\tau} = \inf \BRA{ t_{i} > T_0, \, t_{i} \in \pi \, ; (\overline{V}_{T_0} - v_{0})(\overline{V}_{t_{i}} - v_{0}) \leq 0}.
\end{eqnarray}
We denote $N_{\overline{\tau}} = \frac{\overline{\tau}}{h}$ the corresponding number of time steps and  set $V_{\overline{\tau}}= v_0$.

\paragraph{Forward discretization of the eBSDE.}  Let $\Y^{\theta_{1}} : \R \to \R^{I}$ and $\Z^{\theta_{2}} : \R \to \R^{d \times I}$ be two neural networks, shared across all time steps, intended to approximate the Markovian components $y$ and $z$ of the system of ergodic BSDEs, associated with \eqref{eq:rdtBSDEswitch}. The initial value of the forward discretization is taken as $\Y^{\theta_{1}}(v_{0})$. Without loss of generality, we can assume that $i_0=1$.

Given a realization of the Euler scheme $\overline{V}$, we consider a forward discretization of Equation \eqref{eq:rdtBSDEswitch} on the time grid $\pi$, 
\begin{eqnarray} \label{eq:discrebsdeswitch}
\overline{Y}_{t_{i+1}}^{\theta_{1}, \bar{\lambda}} = \overline{Y}_{t_{i}}^{\theta_{1}, \bar{\lambda}} - F(\overline{V}_{t_{i}}, \Z^{\theta_{2}}(\overline{V}_{t_{i}})) h - G(\overline{Y}_{t_{i}}^{\theta_{1},  \bar{\lambda}})h + \bar{\lambda} h + \Z^{\theta_{2}}(\overline{V}_{t_{i}})^{\top}\Delta W_{t_{i}}, \quad \forall \; 0\leq i \leq N_{\overline{\tau}}.
\end{eqnarray}
The construction of local residuals relies on the iteration of the time discretization \eqref{eq:discrebsdeswitch}. In fact, for $1\leq i \leq N_{\overline{\tau}}$,
\begin{eqnarray} \label{eq:iterative_timediscr_switch}
\overline{Y}_{t_{i}}^{\theta_1, \bar{\lambda}} = \Y^{\theta_{1}}(v_{0}) + \bar{\lambda}t_i - \sum_{k=0}^{i-1} \SBRA{ \PAR{F(\overline{V}_{t_{k}}, \Z^{\theta_2}(\overline{V}_{t_{k}})) + G(\overline{Y}_{t_{k}}^{\theta_{1}, \bar{\lambda}})} h  - \Z^{\theta_2}(\overline{V}_{t_{k}})^{\top} \Delta W_{t_{k}}}.
\end{eqnarray}
\paragraph{Loss function.} The error at each time step is evaluated by computing the squared difference between the network prediction $\Y^{\theta_{1}}(\overline{V}_{t_{i}})$ with the value \eqref{eq:iterative_timediscr_switch} obtained by iteration of the forward discretization with the initial condition. For $1\leq i \leq N_{\overline{\tau}}$, define
\begin{eqnarray} \label{eq:local_err_LA}
\phi_{t_{i}}(\overline{V}, \theta_{1}, \theta_{2}, \lambda) =  \sum_{k=0}^{i-1} \SBRA{\PAR{F(\overline{V}_{t_{k}}, \Z^{\theta_{2}}(\overline{V}_{t_{k}})) + G(\Y^{\theta_{1}}(\overline{V}_{t_{k}}))}h  - \Z^{\theta_{2}}(\overline{V}_{t_{k}})^{\top} \Delta W_{t_{k}}} - \bar{\lambda} t_{i}.
\end{eqnarray}
The quantity $\phi_{t_{i}}$ represents the cumulative increment between time $0$ and time $t_{i}$ induced by the forward discretizations. The residual at time $t_{i}$ is then defined by
\begin{eqnarray} \label{eq:generr_term}
\Err_{t_{i}} = \NRM{\Y^{\theta_{1}}(\overline{V}_{t_{i}}) + \phi_{t_{i}}(\overline{V}, \theta_{1}, \theta_{2}, \lambda) - \Y^{\theta_{1}}(v_{0})}^{2},
\end{eqnarray}
At the initial time, the normalization condition is enforced through
\begin{eqnarray}
\Err_{0} = \ABS{\Y^{1, \theta_{1}}(v_{0}) - y_{0}}^{2}.
\end{eqnarray}
At the terminal time $t_{N_{\overline{\tau}}} = \overline{\tau}$, the condition $V_{\overline{\tau}} = v_{0}$ implies that the initial and terminal value cancels out in \eqref{eq:generr_term}, leading to the residual
\begin{eqnarray*}
\Err_{N_{\bar{\tau}}} = \NRM{\phi_{t_{N_{\tau}}}(\overline{V}, \theta_{1}, \theta_{2}, \bar{\lambda})}^{2}.
\end{eqnarray*}
These errors are then aggregated into a global loss function, which takes the form
\begin{eqnarray} \label{eq:loc_finallossswitch}
L_{\loc}(\theta_{1}, \theta_{2}, \bar{\lambda}) = \e \left[ \ABS{\Y^{1, \theta_{1}}(v_{0}) - y_{0}}^{2} + \displaystyle\sum_{i=1}^{N_{\tau}} \NRM{\Y^{\theta_{1}}(\overline{V}_{t_{i}}) + \phi_{t_{i}}(\overline{V}, \theta_{1}, \theta_{2}, \bar{\lambda})  - \Y^{\theta_{1}}(v_{0})}^{2} \right]
\end{eqnarray}
In practice, this expectation is approximated by Monte Carlo over a set of trajectories $\PAR{\bar{V}^{j}}_{j = 1, \dots B}$, up to their  return time $\PAR{\overline{\tau}_{j}}_{j = 1, \dots B}$, yielding the empirical loss
\begin{eqnarray} \label{eq:emploc_finallossswitch}
L_{\loc}^{B}(\theta_{1}, \theta_{2}, \bar{\lambda}) = \frac{1}{B} \sum_{j=1}^{B} \left[ \ABS{\Y^{1, \theta_{1}}(v_{0}) - y_{0}}^{2} + \displaystyle\sum_{i=1}^{N_{\bar{\tau}_{j}}} \NRM{\Y^{\theta_{1}}(\overline{V}_{t_{i}}^j) + \phi_{t_{i}}(\overline{V}^{j}, \theta_{1}, \theta_{2}, \bar{\lambda})  - \Y^{\theta_{1}}(v_{0})  }^{2} \right]
\end{eqnarray}
\paragraph{Automatic differentiation variant.} A variant consists of computing $Z$ using automatic differentiation, rather than through an additional neural network. In that case, one sets
\begin{eqnarray}
\Z_{t_{i}}^{\theta_{1}} = \left. \kappa \frac{ \partial \Y^{\theta_{1}}(v)}{\partial v} \right|_{v = \overline{V}_{t_{i}}} \in \R^{d\times I },
\end{eqnarray}
and optimizes only over $(\theta_{1}, \overline{\lambda})$. We refer to this modified algorithm as the ADLAeBSDE solver. 

\paragraph{Implementation.} The complete procedure is summarized in Algorithm \ref{alg:locebsdeswitch}. At each gradient step, one first simulates a batch of size $B$ of Euler trajectories then evaluates the empirical loss $L_{\loc}^{B}$, and updates the parameters by stochastic gradient descent.

\begin{algorithm}[!ht]
\DontPrintSemicolon
Let $\Y^{\theta_{1}}$ be a neural network defined on $\R$, valued in $\R^I$ with parameters $\theta_{1}$ and $\Z^{\theta_{2}}$ be a neural network defined on $ \R$, valued in $\R^{d \times I}$, with parameters $\theta_{2}.$ Let $\theta^{0} = (\theta_{1}^{0}, \theta_{2}^{0}, \bar{\lambda}^{0}) \in \R^{3}$ be the initialization of the neural network and ergodic cost parameters. Define $N_{T_{0}} = \lfloor \frac{T_{0}}{h} \rfloor$. \\
\For{$m=0$, ..., $M$}{
\For{$j=1, ..., B$}{
\For{$k\in \BRA{0, ..., N_{T_{0}}}$, starting from $\overline{V}_{0}^{j} = v_{0}$}{
Sample $\Delta W_{t_{k}}^{j} \sim \N(0, h I_{d})$ and compute \\
$\overline{V}_{t_{k+1}}^{j} = \overline{V}_{t_{k}}^{j} + \mu( \overline{V}_{t_{k}}^{j}) h + \kappa^{\top} \Delta W_{t_{k}}^{j},$ \\
}
Let $N_{j} = N_{T_{0}}+1$. \\
\While{$(\overline
{V}_{t_{N_{T_{0}}}}^{j} - v_{0})(\overline{V}_{t_{N_{j}}}^{j} - v_{0}) > 0$}{
Sample $\Delta W_{t_{N_{j}}}^{j} \sim \N(0,h I_{d})$ and compute \\
$\overline{V}_{t_{N_{j}+1}}^{j} = \overline{V}_{t_{N_{j}}}^{j} + \mu( \overline{V}_{t_{N_{j}}}^{j}) h + \kappa^{\top} \Delta W_{t_{N_{j}}}^{j},$ \\
$N_{j} = N_{j}+1$
}
Set, $h N_{j} = \overline{\tau_{j}}$.
}
\For{$j=1, ..., B$}{
Set, $\phi_{t_{0}} = 0$. \\
\For{$k\in \BRA{1, ..., N_{j}}$, starting from $\overline{Y}_{0}^{j} = \Y^{\theta_{1}^{m}}(v_{0})$}{
$ \psi_{t_{k}}^{\theta^{m}, j} =  h F(\overline{V}_{t_{k-1}}^{j}, \Z^{\theta_{2}^{m}}(\overline{V}_{t_{k-1}}^{j})) + h G (\Y^{\theta_{1}^{m}}(\overline{V}_{t_{k-1}}^{j})) - \bar{\lambda}^{m} h - \Z^{\theta_{2}^{m}}(\overline{V}_{t_{k-1}}^{j})^{\top} \Delta W_{t_{k-1}},$ \\
$\phi_{t_{k}}^{\theta^{m}, j} = \phi_{t_{k-1}}^{\theta^{m}, j} + \psi_{t_{k}}^{\theta^{m}, j}$ \\
Compute $\Err_{j}^{\theta^{m}}(t_{k}) = \NRM{\Y^{\theta_{1}^{m}}(\overline{V}_{t_{k}}^{j}) + \phi_{t_{k}}^{\theta^{m}, j} -  \Y^{\theta_{1}^{m}}(v_{0}) }^{2}$.
}}
Compute $ L_{\loc}^{B}(\theta_{1}^{m}, \theta_{2}^{m}, \bar{\lambda}^{m}) = \frac{1}{B} \displaystyle\sum_{j=1}^{B} \PAR{\ABS{\Y^{1, \theta_{1}^{m}}(v_{0}) - y_{0}}^{2} + \sum_{k=1}^{N_{j}} \Err_{j}^{\theta^{m}}(t_{k})}$. \\
Update $\theta^{m+1} = \theta^{m} - \rho_{m} \nabla_{\theta} L_{\loc}^{B}(\theta^{m})$.
}
\caption{Locally additive eBSDE Algorithm - (LAeBSDE)}
\label{alg:locebsdeswitch}
\end{algorithm}

\begin{rmq}
\begin{enumerate}
\item We also tested a global solver, in the spirit of \cite{deepbroux2024}, based on the minimization of a terminal loss at the random horizon. In the switching setting, this approach requires to introduce trainable values for the unknown starting points of forward discretization, for $i \neq i_{0}$. Numerically, the method turned out to be unstable; the forward iteration generated large values in the loss, most likely due to the coupling term $G$. Clipping the differences inside $g$ at $C_{Y}$ did not improve stability, since it also suppressed the corresponding gradients. For this reason, we focused on the locally additive formulation.
\item  Algorithm \ref{alg:locebsdeswitch} could be extended to work for stochastic factor of dimension $d'>1$, by extending the definition of the random horizon $\tau$ to the return time in a centered ball around $v_{0}$. However, we expect the random horizon to have a greater mean, thus leading to high computational cost and possible time discretization error propagation. 
\end{enumerate}
\end{rmq}

\subsection{Deep Galerkin method for ergodic PDE}

Finally, we close this section by introducing an alternative residual-based neural-network algorithm for simulating systems of ergodic BSDEs, in the spirit of   the Deep Galerkin Method (DGM) introduced in \cite{sirignano2018dgm}. Since the DGM was introduced for high-dimensional parabolic PDE, a broad family of neural-network solvers for PDEs has developed around the same idea, of representing the solution by a neural network and training it to satisfy the equation. The DGM and the
closely related physics-informed neural networks of \cite{raissi2019pinn} both minimize the residual of the PDE, and differ in the sampling used. Extensions of the DGM to the standard HJB equations is presented in \cite{acl2022extensions}.

In the following, we extend the application of the Deep-Galerkin method to the system of ergodic PDE \eqref{PDEequation}, which we recall below 
\begin{equation*}
    \mathcal L y^i(v)+ F^{i}(v, \nabla y^{i}(v)\kappa)+\sum_{j \in \I} 
q^{i j}g(y^{j}(v)-y^{i}(v))=\lambda, \quad \forall \;  v\in \R^{d'},  \forall \; i \in \I. 
\end{equation*}
First, the equation is stationary and present an ergodic constant unknown $\lambda$, common to all equations, that is determined jointly with the solution $(y^{i}(.))_{i \in \I}$. Second, the equation is posed on the whole space $\R^{d'}$, so that there is no boundary condition and the choice of the sampling distribution for the residual becomes a modeling decision. While uniform sampling is common on bounded domain, we will prefer sampling from the invariant law of the underlying stochastic factor, in line with the unbounded ergodic setting. Instead, uniqueness is enforced through the normalization $y^{i_{0}}(v_{0}) = y_{0}$, in accordance with Theorem \ref{thm:exsebsde}. A key feature of the DGM is that the dimension $d'$ of the state variable $V^{v_0}$ does not affect the formulation of the method: no spatial grid is required, and the PDE residual is evaluated directly at randomly sampled points in $\mathbb{R}^{d'}$. The method approximates the unknown solution by a neural network and minimizes a loss function built from the PDE residual. This loss typically contains squared residuals evaluated at interior sampling points, together with a normalization term enforcing $y^{i_{0}}(v_{0}) = y_{0}$.

Let $\Y_{\theta}$ be a neural network defined on $\R^{d'}$ and valued in $\R^{I}$, with parameter $\theta$. For each $i \in \I$, define the residuals
\begin{eqnarray} \label{eq:residualsDGM}
R_{\theta}^{i}(v) = \Lr \Y_{\theta}^{i}(v) + F^{i}(v, \nabla \Y_{\theta}^{i}(v) \kappa) + \sum_{j \in \I} q^{ij} g(\Y_{\theta}^{j}(v) - \Y_{\theta}^{i}(v)) - \lambda_{\theta}.
\end{eqnarray}

\paragraph{Ergodic cost approximation.} The approximation of the ergodic cost $\lambda_\theta$ is based on its invariant-measure representation \eqref{eq:lambda_stat}. Replacing the exact solution by the neural network approximation, we define, for each regime $i$ in $\I$,
\begin{eqnarray} \label{eq:lambda_stat}
\lambda_{\theta}^{i} =  \int_{\mathbb R^{d'}}  \left(F^{i}(v, \nabla \Y_{\theta}^{i}(v) \kappa)+\sum_{j \in \I} 
q^{i j}g(\Y_{\theta}^{j}(v)-\Y_{\theta}^{i}(v))\right)\nu(dv).
\end{eqnarray}
where $\nu$ denotes the invariant distribution of the stochastic factor $V^{v_0}$. For the exact solution, the ergodic unknown $\lambda$ is common to all states and thus does not depend on the regime $i$. During the numerical approximation, however, the quantities $\lambda_{\theta}^{i}$ obtained from the network need not coincide. We therefore form a
common approximation $\lambda_{\theta}$ by averaging over the regimes,
\begin{eqnarray} \label{eq:mean_lambdai}
\lambda_{\theta} = \frac{1}{I} \sum_{i} \lambda_{\theta}^{i}.
\end{eqnarray}
An alternative approach would consist in treating $\lambda_\theta$ as an additional trainable scalar parameter, optimized jointly with the parameters of the neural network, as presented in the LAeBSDE and in \cite{deepbroux2024}. In this article, we will use the invariant measure representation \ref{eq:mean_lambdai} to approximate the ergodic cost in the DGM solver.

In this setting, direct sampling from the invariant measure is possible, as considered in \cite{gobet2024numerical}. In the general ergodic case, however, the invariant measure is typically not known explicitly and has to be approximated numerically. This can be done, for instance, using the recursive algorithm introduced in \cite{lamberton2002recursive}.

\paragraph{Loss function.} We now define the loss minimized by the DGM algorithm. Since Theorem \ref{thm:exsebsde} requires the uniform bound \eqref{eq:unifboundy},
we add a penalization term in order to encourage the neural approximation to remain in this admissible region, namely
\begin{eqnarray*}
\mathcal{P}^{i}_{\theta}(v) = \sum_{i \neq j} \max(\ABS{\Y_{\theta}^{i}(v) - \Y_{\theta}^{j}(v)} - C_{Y}, 0).
\end{eqnarray*}
 The loss function is then defined by
\begin{eqnarray}
L(\theta) = \e_{\nu} \SBRA{ \sum_{i\in\I} \ABS{R_{\theta}^{i}(v)}^{2} } + \ABS{\Y_{\theta}^{i_{0}}(v_{0}) - y_{0}}^{2} + \e_{\nu} \SBRA{ \sum_{i\in\I} \ABS{\mathcal{P}^{i}_{\theta}(v)}^{2}},
\end{eqnarray}
where $\e_{\nu}$ denotes the expectation with respect to the invariant distribution. In practice, the expectations entering the loss are approximated by empirical averages based on a set of batchsize $B$. This builds the DGM adaptation for ergodic BSDE, described in Algorithm \ref{alg:DGM}.

\begin{rmq}
Although we choose to sample the collocation points from the invariant distribution $\nu$, this choice is not mandatory for the residual term. One may alternatively sample points from another distribution, for instance uniformly on a compact region of interest, while still using invariant-measure sample for the approximation of $\lambda_{\theta}$. 
\end{rmq}

\begin{algorithm}[!ht]
\DontPrintSemicolon
Let $\Y^{\theta}$ be a neural network defined on $\R^{d'}$, valued in $\R^{I}$ with parameters $\theta$. Let $i_{0} \in \I$, $v_{0} \in \R^{d'}$ and $y_{0} \in \R$ be fixed, ensuring uniqueness of the Markovian solution to \eqref{eq:sebsdeform} such that $y^{i_{0}}(v_{0}) = y_{0}$.  \\
\For{$m=0$, ..., $M$}{
Sample $(\bar{V}_{k})_{1 \leq k \leq B}$ from the invariant distribution $\N \PAR{m, \int_{0}^{\infty} e^{-\mu s} \kappa \kappa^{\top} e^{- \mu^{\top}s}ds }$ \\
\For{$i=1, ..., B$}{Compute $\lambda_{\theta_{m}}^{i} =  \sum_{k = 1}^{B}  (F^{i}(\bar{V}_{k}, \nabla \Y_{\theta_{m}}^{i}(\bar{V}_{k}) \kappa)+\sum_{j \in \I} 
q^{i j}g(\Y_{\theta_{m}}^{j}(\bar{V}_{k})-\Y_{\theta_{m}}^{i}(\bar{V}_{k})).$
}
Compute $\lambda_{\theta_{m}} = \frac{1}{I} \sum_{i=1}^{I} \lambda^{i}_{\theta_{m}}$ \\
\For{$k=1, ..., B$}{
Set, $\phi_{t_{-1}} = 0$. \\
\For{$i = 1, ..., I$}{
$ R_{\theta^{m}}^{i}(\bar{V_{k}}) = \Lr \Y_{\theta_{m}}^{i}(\bar{V_{k}}) + F^{i}(\bar{V_{k}}, \nabla \Y_{\theta_{m}}^{i}(\bar{V_{k}}) \kappa) + \sum_{j \in \I} q^{ij} g(\Y_{\theta_{m}}^{j}(\bar{V_{k}}) - \Y_{\theta_{m}}^{i}(\bar{V_{k}})) - \lambda_{\theta_{m}},$ \\
$P^{i}_{\theta_{m}}(\bar{V}_{k}) = \sum_{i \neq j} \max(\ABS{\Y_{\theta}^{i}(v) - \Y_{\theta}^{j}(v)} - C_{Y}, 0)$.
}
}
Compute $ L_{DGM}^{B}(\theta_{m}) = \frac{1}{B} \displaystyle\sum_{k=1}^{B} \PAR{ \sum_{i=1}^{I} \ABS{R_{\theta^{m}}^{i}(\bar{V_{k}})}^{2} + \sum_{i=1}^{I} P^{i}_{\theta_{m}}(\bar{V}_{k}) + \ABS{\Y_{\theta_{m}}^{i_{0}}(v_{0}) - y_{0}}^{2}}$. \\
Update $\theta^{m+1} = \theta^{m} - \rho_{m} \nabla_{\theta} L_{DGM}^{B}(\theta^{m})$.
}
\caption{Deep Galerkin method for eBSDE - (DGM eBSDE)}
\label{alg:DGM}
\end{algorithm}

\section{Regime switching forward utilities in a stochastic factor model} \label{section:forwardutilitiesswitch}

In this section, we develop a general framework for decision-making in a regime-switching market using forward utilities, generalizing the results of \cite{hu2020systems}, where forward utilities are introduced in such a setting.

We start by recalling the financial market model introduced in \cite{hu2020systems}. Within this setting, we first derive a consistency stochastic partial differential equation that provides a broad characterization of regime-switching forward utilities. As a particular case, we recover the characterization of homothetic (power-type) forward utilities via systems of ergodic BSDEs obtained in \cite{hu2020systems}, and we derive systems of ergodic BSDEs associated with exponential and logarithmic forward utilities in regime switching markets.  These ergodic BSDE systems enable the numerical approximation of the corresponding forward utilities and optimal strategies.

\subsection{Regime switching stochastic factor model}

We consider an agent who can invest her wealth in one riskless asset and $n$ risky assets $S = (S^1, \dots , S^n)$, in a regime switching incomplete financial market. The regime switches are modeled by the continuous-time Markov chain (CTMC) $\alpha$ on the  state space $\I$ and transition rate matrix $\Qr = \PAR{q_{ij}}_{i, j \in \I}$, as defined in \eqref{eq:defalpha}.
In each regime market $i \in \I$, the risky assets are characterized by a market price of risk $(\theta^i(V_t^{v_0}))_{t\geq 0}$ and a volatility matrix $(\sigma^i(V_t^{v_0}))_{t\geq 0}$ driven by the $d'$-dimensional stochastic factor  $V^{v_0}$, as defined in \eqref{Vequation}.

The bond is assumed to be the numeraire, and hence the stock price dynamics discounted by the interest rate is given by 
\begin{eqnarray} \label{eq:stock_price}
dS_{t} = \diag(S_{t}) \sigma^{\alpha_{t^{-}}}(V_{t}^{v_0}) \PAR{\theta^{\alpha_{t^-}}(V_{t}^{v_0})dt + dW_{t}},
\end{eqnarray}
 
\begin{ass} \label{asslispch_switch}
\begin{enumerate}
            \item For all $v \in \R ^{d'}$ and $i \in \I$, the $(n,d)$ matrix  $\sigma^i(v)$ has full row rank $n$. 
            \item For all market regime $i \in\I$, the market price of risk $\theta^{i}:\R^{d'} \to \R^n$ is uniformly bounded and Lipschitz continuous. 
\end{enumerate}
\end{ass}

The agent invests a proportion $\bar{\pi} = \PAR{\bar{\pi}^{1}, ..., \bar{\pi}^{n}}^{\top}$ of her wealth $X^{\pi}$ in the $n$ risky assets. For an initial value $X_{0}^{\pi} = x_{0} \in \R^+$, assuming the self-financing condition holds and rescaling the strategy vector by the volatility, the wealth process $X$ evolves as 
\begin{eqnarray} \label{wealth_switch}
dX_{t}^{\pi} = X_{t}^{\pi} \pi_{t} \cdot \PAR{\theta^{\alpha_{t}}(V_{t}^{v_0})dt + dW_{t}}, \quad \pi_t = \sigma^{\alpha_{t^-}}(V_t^{v_0})^{\top} \bar \pi_t \in \R^d . 
\end{eqnarray}
Let $\PAR{\Pi^{i}}_{i \in \I}$ be a family of closed convex subsets of $\R^{d}$, representing the constraint sets associated with each market regime. For any $i \in \I$, we assume that $\pi^{i} = \sigma^{i}(V_{t}^{v_0})^{\top} \bar{\pi_{t}}^{i} \in \Rr^{i}$, with  $\Rr^{i}$ a vector space of $\R^d$. In the following, for any $a \in \R^{d}$, $a^{\Rr^{i}}$ denotes the orthogonal projection on $\Rr^{i}$ and $a^{i, \perp}$ its orthogonal projection onto $\Rr^{i}$. For any $t \geq 0$, the set of admissible strategies in $[0,t]$ is defined as
\begin{multline}
\A_{t} = \left\{\pi_{s} = \sum_{i \in \I}  \pi_{s}^{i} \ind_{\{\alpha_{s^-}=i\}}, \, \text{for } s \in \SBRA{0, t},  ; \;  \forall \,  i \in \I, \, \pi_{s}^{i} \in \Rr^{i}, \right.\\
\left.  \pi^{i}\,\text{is} \, \f \, \text{prog measurable and } \int_{0}^{t} \ABS{\pi_{s}^{i}}^{2}ds < \infty, \, \p-\text{a.s}.  \right\}.
\end{multline}
For each $t\geq 0$, the predictable strategy $(\pi_t)_{t \geq 0}$ is assumed to lie in $\Rr^{\alpha_{t^{-}}}$, which depends on the current market regime. 
Finally, the set of admissible strategies is denoted by $\mathcal{A} = \bigcup_{t\geq 0 } \mathcal{A}_t$.

\subsection{Forward utilities in a regime-switching environment} \label{sect:sebsdeut}

In this section, we give a general characterization of  forward utilities in a regime-switching market. The introduction of regular random field spaces for the study of differentiability of Itô random fields is recalled in Annex \ref{annex:spaces}, see also \cite{nicole2013exact}.
The decision criteria are built upon a family of random utilities $\PAR{U^{i}}_{i \in \I}$, each modeling the agent's preferences in regime $i$. 
For all $i \in  \I$, the utility random field $U^i :  \R^+ \times \R^{+} \times \Omega \to \R$  associated with market  
 regime $i$ is an $\f$-adapted random field such that:
\begin{itemize}
\item The functions $x \in \R^{+} \mapsto U^i(t,x,\omega)$  are  nonnegative,  strictly concave increasing  functions  of class $\mathcal C^{2}$ on $]0, \infty[$, $(\omega,t)$ a.s.
\item $u_0^i:= U^i(0, \cdot)$ is a standard (deterministic) utility function.
\item  Inada conditions: $\lim_{x \to 0^{+}} U^{i}(t, x) = 0, \quad \lim_{x \to 0^{+}} U_{x}^{i}(t, x) = +\infty, \quad \lim_{x \to +\infty} U_{x}^{i}(t, x) = 0 \quad \text{a.s.}$
\end{itemize}

More specifically, we consider regime specific utilities  $\PAR{U^{i}(t, x)}_{i \in \I}$that are Itô random fields with local characteristics $\PAR{\beta^{i}, \gamma^{i}} \in \K^{ 3, \epsilon}_{\loc} \times \overline{\K}^{ 3, \epsilon}_{\loc}$ with $\epsilon > 0$, namely for $i\in \I$
\begin{eqnarray}
    dU^{i}(t, x) = \beta^{i}(t, x)dt + \gamma^{i}(t, x) \cdot dW_{t}.
\end{eqnarray}
By Theorem $2.2$ in \cite{nicole2013exact}, $U^{i}$ is a $\K^{ 3, \epsilon'}_{loc}$ semimartingale, for any $\epsilon' < \epsilon$.

\paragraph{Regime switching utility} The regime-switching  utility $U$ is the $\g$-adapted càdlàg process defined as follows:
\begin{eqnarray} \label{eq:Usum}
U(t, x) = U^{\alpha_t}(t,x) = \sum_{i \in \I} U^i(t,x) \ind_{\{\alpha_t = i\}}.
\end{eqnarray}
At each regime-switching time $T_k$ (i.e jump times of $\alpha$), the  random field $U$ jumps from the utility $U^{\alpha_{T_{k-1}}}$ in market regime $\alpha_{T_{k-1}}$ to $U^{\alpha_{T_{k}}}$ in market regime $\alpha_{T_{k}}$. The regime-switching utility $U$ \eqref{eq:Usum} have thus the following dynamics: 
\begin{multline} \label{eq:genitoU}
U(t, x) = U^{\alpha_{0}}(0, x) + \sum_{i \in \I} \int_{0}^{t} \ind_{\{\alpha_{s^-}=i\}}\left( \beta^{i}(s, x) +  \sum_{j \in \I}  q^{ij}(U^j(s,x) - U^i(s,x))\right) ds  \\
+ \int_{0}^{t} \ind_{\{\alpha_{s^-}=i\}} \gamma^{i}(s, x) \cdot dW_s +  \int_0^t \int_{\I^2} \left( U^j(s,x) - U^i(s,x)\right)\ind_{\{\alpha_{s^-} =i\}}\tilde  N(ds, di,dj),
\end{multline}
with $\tilde  N$ the compensated Poisson measure as defined in Lemma \ref{lemma:eBDSEjump}.

The decision criterion maintains time consistency within the given investment context, in the sense of the following definition, as introduced in \cite{hu2020systems}. The optimal strategy provides maximal satisfaction to the agent, which is preserved at all times in the future. 

\begin{defi}[\textit{Regime switching forward utility}] \label{def:switchut}
A family of utility random fields $\PAR{U^{i}(t, x)}_{i \in I}$ generates a regime switching forward utility if the $\g$-adapted càdlàg process $U$ defined by \eqref{eq:Usum} satisfies the time consistency property:
\begin{itemize}
    \item  For any admissible strategy $\pi \in \mathcal{A}$, $U(t, X_{t}^{\pi})$, $t\geq 0$, is a locale supermartingale, i.e.
    \begin{eqnarray*}
        U(t, X_{t}^{\pi}) \geq \e \SBRA{U(s, X_{s}^{\pi}) | \Gr_{t}}.
    \end{eqnarray*}
    \item There exists an admissible optimal strategy $\pi^{*} \in \mathcal{A}$ such that $U(t, X_{t}^{\pi^{*}})$, $t\geq 0$, is a local martingale, i.e
        \begin{eqnarray*}
        U(t, X_{t}^{\pi^{*}}) = \e \SBRA{U(s, X_{s}^{\pi^{*}}) | \Gr_{t}}.
    \end{eqnarray*}
\end{itemize}
\end{defi}

\paragraph{Time consistency} In Theorem \ref{thm:consistswitch} we give  a sufficient  characterization of time consistency for  regime-switching forward utilities.  A general framework for forward utilities with jumps and the related nonlinear consistency SPDE has been introduced \cite{matoussi2022dynamic}, extending the results of \cite{musiela2010stochastic}, \cite{nicole2013exact} in the continuous case, where the authors rely on a generalization of Itô-Ventzel's formula with jumps introduced in \cite{oksendal2007ito}. 
Here, the regime-switching setting can be dealt with directly. Indeed, despite the jumps of the market price of risk $\theta^{i}$, the wealth process $X^{\pi}$ is a continuous-time diffusion process. 
 Hence, the dynamics of  the compound process $U(t, X_{t}^{\pi})$ is first obtained by applying the Itô-Ventzel's formula with jumps. 
For $U$ to satisfy the martingale consistency property from Definition \ref{def:switchut}, a necessary condition is that its drift $\beta^{U}(t, X_{t}^{\pi}) \leq 0$, with equality holding along some optimal strategy. A natural candidate optimal strategy $\pi_{t}^{*}(x)$ is thus the one that maximizes $\beta^{U}(t,\cdot)$, when seen as function of $\pi$. It is actually sufficient to impose that this maximum equals $0$ to ensure that $\beta^{U}(t,  X_{t}^{\pi}) \leq 0$ for all admissible strategies $\pi$, thus providing time consistency of the regime-switching forward utility $U$. Finally, the following assumption ensures the admissibility of the optimal strategy.
\begin{ass} \label{ass:MM_adm}
Let $U$ be a regime-switching utility as defined in \eqref{eq:genitoU}. There exists a process $K\in L^2(dt)$ such that a.s., for all $i\in\I$, $t\ge0$, $x>0$,
\begin{eqnarray}
\NRM{\gamma^{i,\perp}_x(t,x)} \le K_t\, U^i_x(t,x),
\qquad
\NRM{\gamma^{i,\perp}_{xx}(t,x)} \le K_t\, \ABS{U^i_{xx}(t,x)},
\end{eqnarray}
where $\gamma^{i,\perp}=\Proj_{(\Rr^i)^\perp}\gamma^i$.
\end{ass}

\begin{thm} \label{thm:consistswitch}
Let $U$ be a regime-switching utility as defined in \eqref{eq:genitoU}, verifying Assumption \ref{ass:MM_adm}. Assume that for each $i \in \I$, the utility random field $U^i$ of local characteristics $(\beta^i, \gamma^i)$ verifies: 
\begin{align}
\beta^{i}(t, x) &= - \frac{1}{2}U_{xx}^{i}(t, x) \underset{\pi_{t} \in \Pi}{\inf} \Qr^i(t, X_{t}^{\pi}, \pi)
- \sum_{j \in \I} \SBRA{U^{j}(t, x) - U^{i}(t, x)}q^{ij}, \label{eq:driftcondswitch} \\
\text{where} \quad &\Qr^{i}(t, x, \pi) = \NRM{x \pi}^{2} + 2x \pi \cdot \frac{U_{x}^{i}(t, x) \theta^{i}(V_{t}) + \gamma_{x}^{i}(t, x)}{U_{xx}^{i}(t, x)}. \label{eq:defQswitch}
\end{align}
Then, the regime switching utility $U$ defined in \eqref{eq:Usum} is a regime switching forward utility in the sense of Definition \ref{def:switchut}. 
The optimal investment strategy $\pi_{t}^{*}$ is given by
\begin{equation} \label{eq:optportgenswitch}
    \pi_{t}^{*}= \sum_{i \in \I}  \pi_t^{i,*}  \ind_{\{\alpha_t = i\}}, 
\end{equation}
where  $\pi^{i,*}_t :=\pi^{i,*}_t(X_t^{\pi^*})$ is the optimal strategy in regime $i$,  with 
\begin{equation}
\label{eq:pi_regime_i}
  \pi^{i,*}_t(x) =  \frac{- 1}{x U_{xx}^{i}(t, x)} \Proj_{\Rr^{i}}  \PAR{U_{x}^{i}(t, x) \theta^{i} (V_{t}) + \gamma_{x}^{i}(t, x)}. 
\end{equation}
In particular,
\begin{equation*}
    Q^i (t,x,\pi^{i,*}_t)= - \NRM{x \pi^{i,*}_t}^{2}.
\end{equation*}
\end{thm}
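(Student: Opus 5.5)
The plan is to compute the dynamics of the compound process $U(t,X_t^\pi)$ for an arbitrary admissible $\pi\in\A$ and show that its finite-variation part is nonpositive, vanishing along $\pi^*$.

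\textbf{Dynamics of the compound process.} Since $X^\pi$ is continuous, between two consecutive jump times $T_k\le t<T_{k+1}$ of $\alpha$ we have $U=U^{\alpha_{T_k}}$. Each $U^i$ is a $\K^{3,\epsilon'}_{\loc}$ semimartingale, so the classical Itô--Ventzell formula (as in \cite{nicole2013exact}) applies to $U^i(t,X_t^\pi)$. At the jump times $T_k$ the wealth does not jump, and the compound process jumps by $U^{j}(T_k,X_{T_k}^\pi)-U^{i}(T_k,X_{T_k}^\pi)$. Summing over regimes via \eqref{eq:genitoU}, or directly via the Itô--Ventzell formula with jumps of \cite{oksendal2007ito, matoussi2022dynamic}, we get
\begin{align*}
dU(t,X_t^\pi) &= \sum_{i\in\I}\ind_{\{\alpha_{t^-}=i\}}\Big(\beta^i + \sum_{j}q^{ij}(U^j-U^i) + U_x^i X_t^\pi\pi_t\cdot\theta^i + \tfrac12 U^i_{xx}\NRM{X_t^\pi\pi_t}^2 + \gamma^i_x\cdot X_t^\pi\pi_t\Big)(t,X_t^\pi)\,dt\\
&\quad + dM_t,
\end{align*}
where $M$ gathers the $dW$ term $\big(\gamma^i + U_x^iX^\pi_t\pi_t\big)\cdot dW_t$ and the compensated jump integral against $\tilde N$, and is therefore a local martingale.

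\textbf{Drift analysis.} Rewriting the drift using \eqref{eq:defQswitch} gives, in regime $i$,
\[
\beta^i+\sum_j q^{ij}(U^j-U^i) + \tfrac12 U^i_{xx}\,\Qr^i(t,x,\pi).
\]
Plugging in \eqref{eq:driftcondswitch}, this becomes $\tfrac12 U^i_{xx}\big(\Qr^i(t,x,\pi)-\inf_{\pi'}\Qr^i(t,x,\pi')\big)$. Since $U^i_{xx}<0$, it is $\le0$, with equality exactly at minimizers. The function $\Qr^i$ is a strictly convex quadratic in $x\pi$ over the vector space $\Rr^i$. Completing the square, its minimizer is the orthogonal projection onto $\Rr^i$ of $-(U^i_x\theta^i+\gamma^i_x)/U^i_{xx}$. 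This is \eqref{eq:pi_regime_i}, and substituting it back yields $\Qr^i(t,x,\pi^{i,*})=-\NRM{x\pi^{i,*}}^2$. Hence $U(t,X^\pi_t)$ is a local supermartingale for every $\pi$, and a local martingale for $\pi^*$ built as in \eqref{eq:optportgenswitch}; in $\pi^*$ one uses $\alpha_{t^-}$ to make it predictable, which is a.e. equal to $\alpha_t$.

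\textbf{Main obstacle: admissibility of $\pi^*$.} The hard step is showing that $\pi^*\in\A$, i.e. that the wealth SDE with feedback $\pi^{i,*}_t(x)$ has a solution with $\int_0^t|\pi^*_s|^2ds<\infty$ a.s. I would first bound $x\pi^{i,*}(t,x)$ using Assumption \ref{ass:MM_adm} and the boundedness of $\theta^i$ (Assumption \ref{asslispch_switch}). This gives $|x\pi^{i,*}|\le (U^i_x/|U^i_{xx}|)(\|\theta\|_\infty+K_t)$. The map $x\mapsto x\pi^{i,*}(t,x)$ is locally Lipschitz thanks to the $\K^{3}$ regularity, and its $x$-derivative is controlled by $K_t$ and $\theta$ through the second inequality of the assumption. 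This gives linear growth with coefficient in $L^2(dt)$, so between jump times the SDE has a unique strong, positive solution, as in \cite{nicole2013exact}. We then paste the solutions across the a.s. finitely many jumps on $[0,t]$. The resulting $\pi^*_t$ lies in $\Rr^{\alpha_{t^-}}$ and satisfies the integrability condition, so it is admissible.
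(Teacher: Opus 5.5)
Your derivation of the dynamics of $U(t,X^\pi_t)$ matches the paper's proof. So do the rewriting of the regime-$i$ drift as $\beta^i+\sum_j q^{ij}(U^j-U^i)+\frac12 U^i_{xx}\Qr^i(t,x,\pi)$, the sign argument via $U^i_{xx}<0$, and the completion of the square giving \eqref{eq:pi_regime_i} and $\Qr^i(t,x,\pi^{i,*})=-\NRM{x\pi^{i,*}}^2$. Your remark on $\alpha_{t^-}$ versus $\alpha_t$ is also sound.

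The gap is the admissibility step. Assumption \ref{ass:MM_adm} controls $\gamma^{i,\perp}_x=\Proj_{(\Rr^i)^\perp}\gamma^i_x$, the component of $\gamma^i_x$ \emph{orthogonal} to $\Rr^i$. By contrast, $x\pi^{i,*}=-\Proj_{\Rr^i}(U^i_x\theta^i+\gamma^i_x)/U^i_{xx}$ involves only the component \emph{in} $\Rr^i$. So your bound $\ABS{x\pi^{i,*}}\le (U^i_x/\ABS{U^i_{xx}})(\|\theta\|_\infty+K_t)$ does not follow from the assumption.

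Even granting that bound, two further steps fail:
\begin{itemize}
\item $U^i_x/\ABS{U^i_{xx}}$ is the risk tolerance, which is not assumed to grow linearly, so you do not get linear growth.
\item $\partial_x(x\pi^{i,*})$ involves $\Proj_{\Rr^i}\gamma^i_{xx}/U^i_{xx}$ and terms proportional to $U^i_{xxx}/(U^i_{xx})^2$. Neither inequality of the assumption controls these.
\end{itemize}
The $\K^3$ regularity only gives local Lipschitz continuity on $(0,\infty)$. Hence global existence, non-explosion and positivity of the feedback wealth, and therefore $\pi^*\in\A$, are not established by your sketch.

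The assumption is in fact designed for the dual problem. Set $y=U^i_x(t,x)$ and let $y\nu^i(t,y):=\gamma^{i,\perp}_x(t,(U^i_x)^{-1}(t,y))$ be the volatility correction of the optimal state-price density. Then $\NRM{\nu^i(t,y)}\le K_t$ and $\NRM{\partial_y(y\nu^i(t,y))}=\NRM{\gamma^{i,\perp}_{xx}}/\ABS{U^i_{xx}}\le K_t$. This is exactly what makes the dual SDE well posed in the stochastic-flow approach of \cite{nicole2013exact}, where the optimal wealth is recovered through the inverse marginal utility rather than from the primal feedback equation.

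The paper does not solve the primal equation at all. It gets admissibility of $\pi^*$, together with monotonicity and concavity of the $U^i$, by invoking Theorem 3.8 of \cite{matoussi2022dynamic}, with the L\'evy measure replaced by the jump measure of $\alpha$. To complete your proof, either appeal to that result or carry out the dual construction. In the switching case the dual construction must also handle the jumps $U^j_x-U^i_x$ of the state-price process at regime changes. The direct primal route would need extra hypotheses on $\Proj_{\Rr^i}\gamma^i_x$ and on the risk tolerance and its derivative, which are not part of the statement.
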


\begin{rmq}
Note that the expression for the optimal portfolio in regime $i$ coincides with the non-switching case, see e.g.  \cite{nicole2013exact}. However, the dynamics of the regime-switching optimal strategy differ from those in the single regime setting since the consistency condition for each regime \eqref{eq:driftcondswitch} induce a coupling of the dynamics of regime-specifc utilities .
\end{rmq}

\begin{preuve}
Let $(\pi_t)_{t\geq 0} = (\sum_{i\in \I} \pi^{i}_t \ind_{\{\alpha_{t^-} = i\}})_{t\geq 0} \in \mathcal{A}$ be an admissible strategy, associated with the wealth $X^\pi$.  Under regularity assumptions on the local characteristics, we can apply the It\^o-Ventzel's formula with jumps to  $U(t, X_{t}^{\pi})$. By \eqref{eq:genitoU}, this yields that:
\begin{align*}
dU(t, X_{t}^{\pi}) &= \sum_{i\in \I} \ind_{\{\alpha_{t^-} =i \}} \left(\beta^{i}(t, X_t^\pi) +  \sum_{j \in \I}  q^{ij}(U^j(t, X_t^\pi) - U^i(t, X_t^\pi)) dt \right) \\
&\quad +  \sum_{i\in \I} \ind_{\{\alpha_{t^-}=i \}}\gamma^{i}(t, X_t^\pi) \cdot dW_t + \int_{\I^2} \left( U^j(t,X_{t}^{\pi}) - U^i(t,X_{t}^{\pi})\right)\ind_{\{\alpha_{t^-} =i\}}\tilde  N(dt, di,dj) \\
&\quad  + \sum_{i\in \I} \ind_{\{\alpha_{t^-}=i \}}  \left( U_{x}^i(t, X_{t}^\pi) \SBRA{X_{t}^{\pi} \pi_{t}^i \cdot  \PAR{\theta^{i}(V_{t})dt + dW_{t}}} + \frac{1}{2}U_{xx}^i(t, X_{t}) \NRM{X_{t}^{\pi} \pi_{t}^i}^{2}dt  + X_{t}^{\pi} \gamma_{x}^i (t, X_{t}^{\pi})  \cdot \pi_{t}^i dt \right).
\end{align*}
Note that there is no jump quadratic variation because the wealth process $(X_t)_{t \geq 0}$ is continuous. In integral form, the Brownian terms and compensated terms of the jump part  are local martingales. Hence, a sufficient condition for consistency in this framework is that the drift of $U(t, X_{t}^{\pi})$ is negative for any admissible strategy, and that there exists an optimal strategy $\pi^{*}$ along which it vanishes. Accounting for the jumps' contribution, the drift of $U(t, X_{t}^{\pi})$ takes the form
\begin{align}
\beta^{U}(t, X_t^\pi) &=  \sum_{i\in \I} \ind_{\{\alpha_t =i \}} \left( \beta^{i}(t, X_{t}^{\pi}) + \frac{1}{2}U_{xx}^i(t, X_{t}^{\pi}) \Qr^i(t, X_{t}^{\pi}, \pi^i_t)
+ \sum_{j \in \I} \SBRA{U^{j}(t, x) - U^{i}(t, x)}q^{i j}\right), \label{eq:driftsiwtchpreuve}
\end{align}
where $\Qr^i$ is given by \eqref{eq:defQswitch}. For $i \in \I$, assume that the solution $U^i(t, .)$ obtained with \eqref{eq:driftcondswitch} is concave. Then,  its second derivative is negative, and thus  each term of the sum in  \eqref{eq:driftsiwtchpreuve} is maximal when $Q^i$ is minimal. The candidate optimal policy in regime $i$ is then defined by $\pi_{t}^{i,*} = \pi_{t}^{i,*}(X^{\pi^*}_t)$, with  $\pi_{t}^{i,*}(x):=  \underset{\pi \in \Rr^i}{\argmin} Q^i(t, x, \pi)$,  and from the first order condition,
\begin{eqnarray*}
\pi_{t}^{i,*}(x) = \Proj_{\Rr^{i}} \PAR{\frac{- 1}{x U_{xx}^i(t, x)} \PAR{U_{x}^i(t, x) \theta^{i}(V_{t}) + \gamma_{x}^{i}(t, x)}}.
\end{eqnarray*}
For all $i\in \I$ the quadratic form $\Qr^i$ attains its minimal value $\Qr(t, X_{t}^{\pi^*}, \pi^{i,*}) = - \NRM{X_{t}^{\pi^*} \pi_{t}^{i,*}}^{2}$ for this strategy. The drift of $U(t, X_t^\pi)$ thus satisfies for any admissible strategy $\pi \in \A$
\begin{align*}
\beta^{U}(t, X_t^\pi) &=\sum_{i\in \I} \ind_{\{\alpha_t =i \}} \left( \beta^{i}(t, X_{t}^{\pi}) + \frac{1}{2}U_{xx}^i(t, X_{t}^{\pi}) \Qr^i(t, X_{t}^{\pi}, \pi^i_t)
+ \sum_{j \in \I} \SBRA{U^{j}(t, x) - U^{i}(t, x)}q^{i j}\right) \\
&\leq \sum_{i\in \I} \ind_{\{\alpha_t =i \}} \left( \beta^{i}(t, X_{t}^{\pi}) - \frac{1}{2}U_{xx}^i(t, X_{t}^{\pi})  \NRM{X_{t}^{\pi^*} \pi_{t}^{i,*}}^{2}
+ \sum_{j \in I} \SBRA{U^{j}(t, X_{t}^{\pi^*}) - U^{i}(t, X_{t}^{\pi^*})}q^{i j}\right).
\end{align*}
By the assumption \eqref{eq:driftcondswitch}, the right hand side of the inequality is equal to 0 and thus the process $(U(t, X_t^\pi))_{t\geq 0}$ is a supermartingale. \\
It remains to show that for all $i \in \I$,  $U^i$ is an increasing and concave positive random field and the $\pi^* \in \A$. This can be obtained  as  our framework falls under the framework of forward utilities with jumps introduced in \cite{matoussi2022dynamic},  with the Lévy measure replaced by the jump measure of the CTMC $\alpha$ evolving on the finite state space $\I$. Under the Inada condition and by  Assumption \ref{ass:MM_adm}, Theorem 3.8 in \cite{matoussi2022dynamic} is verified, which allows us to conclude. 

\end{preuve}

\paragraph{Regime switching power forward utilities}

A typical choice for the regime specific utility random fields $(U^i)_{i \in \I}$, are homothetic power type utilities, of separable form 
\begin{align} \label{eq:powUi}
U^i(t,x) = P^i_t \frac{x^{\delta_i}}{\delta_i}, \quad \delta_i \in (-\infty,0) \cup (0,1), 
\end{align}
and with $P^i$ an Itô diffusion process with the following dynamics:
\begin{equation*}
    dP^i_t = P^i_t (b_t^i dt + \nu_t^i \cdot dW_t). 
\end{equation*}
The following proposition is a corollary of Theorem \ref{thm:consistswitch}, and shows that  regime-switching power forward utilities have necessarily the same risk aversion in each regime.
\begin{prop}
    \label{prop:consistpower}
    Let $(U^i)_{i \in \I}$ be a family of power type utilities as above, defining a regime switching forward utility $U$ by  \eqref{eq:Usum}.  Then,  under the assumptions of Theorem \ref{thm:consistswitch}, the regime specific utilities have necessarily the same risk aversion coefficient: 
    \begin{equation*}
        \delta_i = \delta, \quad \forall \; i\in \I, 
    \end{equation*}
   and  the consistency assumption \eqref{eq:driftcondswitch} of Theorem can be written as: 
   \begin{equation*}
      b_t^i  = \frac{\delta_i(\delta_i - 1)}{2} \Proj_{\Rr^i} \PAR{\frac{  \nu^i_t + \theta^{i}(V_t)}{1 - \delta_i}}^2 - \sum_{j\in \I}\frac{P^j_t}{P^i_t} q^{ij}. 
\end{equation*}
\end{prop}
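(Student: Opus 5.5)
We substitute the power ansatz \eqref{eq:powUi} into the consistency condition \eqref{eq:driftcondswitch} of Theorem \ref{thm:consistswitch}, and read off what the identity in $x$ forces.

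\textbf{Derivatives and local characteristics.} For $U^i(t,x)=P^i_t x^{\delta_i}/\delta_i$ we have
\[
U^i_x = P^i_t x^{\delta_i-1},\qquad U^i_{xx}=(\delta_i-1)P^i_t x^{\delta_i-2}.
\]
The local characteristics are
\[
\beta^i(t,x)=P^i_t b^i_t\,x^{\delta_i}/\delta_i,\qquad \gamma^i(t,x)=P^i_t\nu^i_t\,x^{\delta_i}/\delta_i,
\]
so that $\gamma^i_x=P^i_t\nu^i_t x^{\delta_i-1}$.

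\textbf{The infimum of $\Qr^i$.} Plugging these into \eqref{eq:defQswitch}, the ratio $(U^i_x\theta^i+\gamma^i_x)/U^i_{xx}$ equals $x(\theta^i(V_t)+\nu^i_t)/(\delta_i-1)$. Hence $\Qr^i(t,x,\pi)=x^2\big(\NRM{\pi}^2-2\pi\cdot a^i\big)$ with
\[
a^i=\frac{\theta^i(V_t)+\nu^i_t}{1-\delta_i}.
\]
The infimum over $\pi\in\Rr^i$ is $-x^2\NRM{\Proj_{\Rr^i}a^i}^2$. This matches $\Qr^i(t,x,\pi^{i,*})=-\NRM{x\pi^{i,*}}^2$ with $\pi^{i,*}=\Proj_{\Rr^i}a^i$, independent of $x$, consistent with \eqref{eq:pi_regime_i}. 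Consequently
\[
-\tfrac12 U^i_{xx}\inf_\pi \Qr^i=\tfrac12(\delta_i-1)P^i_t x^{\delta_i}\NRM{\Proj_{\Rr^i}a^i}^2 .
\]

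\textbf{Identity in $x$.} Writing \eqref{eq:driftcondswitch} out gives, for all $x>0$,
\begin{align*}
P^i_t b^i_t\frac{x^{\delta_i}}{\delta_i}
=\frac{\delta_i-1}{2}P^i_t x^{\delta_i}\NRM{\Proj_{\Rr^i}a^i}^2-\sum_{j\in\I}q^{ij}\Big(P^j_t\frac{x^{\delta_j}}{\delta_j}-P^i_t\frac{x^{\delta_i}}{\delta_i}\Big).
\end{align*}
This is the main step. The terms with $j\neq i$ carry the power $x^{\delta_j}$, while all other terms are multiples of $x^{\delta_i}$. Functions $x\mapsto x^{c}$ with distinct exponents are linearly independent on $(0,\infty)$; one sees this by dividing by the smallest power and letting $x\to0^+$, or equivalently by a Vandermonde argument after the substitution $x=e^s$. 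Since $q^{ij}>0$ (Assumption \ref{ass:MC}) and $P^j_t>0$, the coefficient of $x^{\delta_j}$ cannot vanish. So for each fixed $j\neq i$, if $\delta_j\neq\delta_i$, collecting all $j'$ with $\delta_{j'}=\delta_j$ gives a coefficient $-\sum q^{ij'}P^{j'}_t/\delta_j$. This is nonzero because it is a sum of terms of a single sign, and that contradicts the identity. Hence $\delta_j=\delta_i$ for all $i,j$, i.e.\ $\delta_i=\delta$.

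\textbf{Reading off $b^i_t$.} With a common $\delta$, we divide the identity by $P^i_t x^{\delta}/\delta$. The $j=i$ term has zero bracket, so the sum can be taken over all $j$. This yields
\[
b^i_t=\frac{\delta(\delta-1)}{2}\NRM{\Proj_{\Rr^i}a^i}^2-\sum_{j}q^{ij}\Big(\frac{P^j_t}{P^i_t}-1\Big).
\]
Using $q^{ii}=-\sum_{j\neq i}q^{ij}$, the constant $\sum_j q^{ij}=0$ drops out, and we recover the displayed formula; its right-hand side is written with $\delta_i$, which now equals $\delta$. The only delicate point is the linear-independence argument; all other steps are direct substitution.
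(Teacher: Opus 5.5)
Your proof is correct and follows the same route as the paper. You substitute the power ansatz into \eqref{eq:driftcondswitch} and obtain the minimiser $\Proj_{\Rr^i}\bigl((\nu^i_t+\theta^i(V_t))/(1-\delta_i)\bigr)$. You then use $q^{ij}>0$ to force the exponents $\delta_j$ to coincide, and read off $b^i_t$ via $\sum_j q^{ij}=0$. Your explicit linear-independence argument, which groups indices with equal exponent so that the same-sign coefficients cannot cancel, just makes rigorous the step the paper asserts in one line ("we have necessarily $\delta_i=\delta_j$").
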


\subsection{Link between homothetic switching forward utilities and systems of ergodic BSDEs} \label{sect:sebsde_ut}

A straightforward application of Proposition \ref{prop:consistpower} and Itô's formula allows us to establish a correspondence between regime-switching forward utility in power form and a system of ergodic BSDEs, thus recovering  the characterization of \cite{hu2020systems}. 

\begin{coro}[\cite{hu2020systems}] \label{prop:powerswitchut}
Consider a family of utility random fields $\PAR{U^{i}(t, x)}_{i \in \I}$ of homothetic power type 
\begin{eqnarray} \label{eq:powerswitchut}
U^{i}(t, x) = \frac{x^{\delta}}{\delta} e^{y^{i}(V_{t}) - \lambda t}, \quad \delta \in (-\infty,0) \cup (0,1), 
\end{eqnarray}
where $\PAR{(y^{i}(.), z^{i}(.))_{i \in \I}, \lambda}$ is a Markovian solution of the system of ergodic BSDE \eqref{eq:sebsdeform}, with generator $F^{i}$ and coupling term $G^{i}$ given by 
\begin{eqnarray}
    F^{i}(v, z) &=& \frac{1}{2}\delta (\delta - 1) \dist^{2} \PAR{\Rr^i, \frac{z + \theta^{i}(v)}{1 - \delta}} + \frac{\delta}{2(1 - \delta)} \NRM{z + \theta^{i}(v)}^{2} + \frac{\NRM{z}^{2}}{2}, \label{eq:powergenswitch} \\
    G^{i}(y) &=& \sum_{j \in \I} \PAR{e^{y^{j} - y^{i}} - 1}q^{ij}.
\end{eqnarray}
This family generates a Markovian regime switching forward performance process $U$, and the associated optimal strategy in regime $i$ takes the form
\begin{eqnarray} \label{eq:optport_power_switch}
    \pi_{t}^{i, *} = \Proj_{\Rr^{i}} \PAR{\frac{z^{i}(V_{t}) + \theta^{i}(V_{t})}{1 - \delta}}.
\end{eqnarray}
\end{coro}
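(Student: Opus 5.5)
The plan is to verify the consistency condition of Theorem \ref{thm:consistswitch}, or equivalently Proposition \ref{prop:consistpower}, for the ansatz \eqref{eq:powerswitchut}. The first step is to identify the local characteristics. Write $U^i(t,x)=P^i_t x^\delta/\delta$ with $P^i_t=e^{Y^i_t-\lambda t}$, where $Y^i_t=y^i(V_t)$ and $Z^i_t=z^i(V_t)$. The Markovian solution satisfies the differential form of \eqref{eq:sebsdeform},
\begin{equation*}
dY^i_t=-\PAR{F^i(V_t,Z^i_t)+G^i(Y_t)}dt+\lambda\,dt+(Z^i_t)^\top dW_t .
\end{equation*}
Applying Itô's formula to $e^{Y^i_t-\lambda t}$ then gives $dP^i_t=P^i_t(b^i_t\,dt+\nu^i_t\cdot dW_t)$ with
\begin{equation*}
\nu^i_t=Z^i_t,\qquad b^i_t=-F^i(V_t,Z^i_t)-G^i(Y_t)+\tfrac12\NRM{Z^i_t}^2 .
\end{equation*}
The ergodic constant cancels between the $-\lambda\,dt$ coming from the exponential and the $+\lambda\,dt$ in the BSDE, so $\lambda$ only enters through the normalization of $U$. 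Consequently $\beta^i=b^i_t U^i$ and $\gamma^i=\nu^i_t U^i$, with $U^i_x=P^i x^{\delta-1}$ and $U^i_{xx}=(\delta-1)P^i x^{\delta-2}$. Since all regimes share the same exponent $\delta$, the ratios $U^j/U^i=P^j/P^i=e^{y^j-y^i}$ do not depend on $x$.

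The second step is to substitute these expressions into \eqref{eq:driftcondswitch}. The coupling term gives
\begin{equation*}
-\sum_j q^{ij}(U^j-U^i)=-U^i\sum_j q^{ij}\PAR{e^{y^j-y^i}-1}=-U^i G^i(y).
\end{equation*}
This cancels exactly the $-G^iU^i$ contained in $\beta^i$. For the infimum over the constraint set, I would carry out the minimization over $\pi\in\Rr^i$ explicitly, using $\gamma^i_x=\nu^i U^i_x$. The resulting expression is
\begin{equation*}
-\tfrac12 U^i_{xx}\inf\Qr^i=\tfrac12 U^i_{xx}\NRM{x\pi^{i,*}}^2
=\tfrac{\delta(\delta-1)}{2}U^i\,\NRM{\Proj_{\Rr^i}\PAR{\tfrac{z+\theta^i}{1-\delta}}}^2 .
\end{equation*}
It remains to check that $-F^i+\tfrac12\NRM{z}^2$ equals this projected squared norm term. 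This follows from the Pythagorean split $\NRM{a}^2=\NRM{\Proj_{\Rr^i}a}^2+\dist^2(\Rr^i,a)$ applied to $a=(z+\theta^i)/(1-\delta)$. The $\dist^2$ term and the full-norm term $\frac{\delta}{2(1-\delta)}\NRM{z+\theta^i}^2=-\frac{\delta(\delta-1)}{2}\NRM{a}^2$ in \eqref{eq:powergenswitch} combine exactly into $-\frac{\delta(\delta-1)}{2}\NRM{\Proj a}^2$. Matching the sign conventions in this algebra is where I expect the only real care to be needed; the rest is bookkeeping.

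The third step is to read off the optimal strategy from \eqref{eq:pi_regime_i}. We have
\begin{equation*}
-\frac{1}{xU^i_{xx}}\PAR{U^i_x\theta^i+\gamma^i_x}=\frac{z^i+\theta^i}{1-\delta},
\end{equation*}
which yields \eqref{eq:optport_power_switch}.

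Finally, Theorem \ref{thm:consistswitch} requires Assumption \ref{ass:MM_adm} and enough regularity to apply Itô–Ventzel. I would deduce both from Theorem \ref{thm:exsebsde}. Since $z^i$ is bounded and $\theta^i$ is bounded by Assumption \ref{asslispch_switch}, we get $\gamma^{i,\perp}_x=(z^i)^{\perp}U^i_x$, whose norm is bounded by $Z_{\max}U^i_x$, and similarly for the second derivative, so $K_t$ can be taken constant. The same boundedness makes $\pi^*$ bounded and hence admissible. Positivity, concavity and the Inada conditions follow from the power form, with the convention that $U^i$ is negative when $\delta<0$, as in the non-switching case. The conclusion then follows from Theorem \ref{thm:consistswitch}.
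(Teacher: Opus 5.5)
Your proposal is correct and follows the route the paper indicates for this corollary: Itô's formula on $e^{y^i(V_t)-\lambda t}$ gives $\nu^i=z^i$ and $b^i=-F^i-G^i+\tfrac12\|z^i\|^2$, which you then insert into the consistency condition of Proposition \ref{prop:consistpower}/Theorem \ref{thm:consistswitch} and into \eqref{eq:pi_regime_i} to obtain the optimal strategy. Your explicit use of $\|\Proj_{\Rr^i}a\|^2=\|a\|^2-\dist^2(\Rr^i,a)$ is the right way to match the projection form of the drift condition with the $\dist^2$ term in \eqref{eq:powergenswitch}. The paper's proof of Proposition \ref{prop:consistpower} blurs this step by writing $-\|x\pi^{i,*}\|^2=-x^2\dist^2(\cdot)$.
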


Similary, Theorem \ref{thm:consistswitch} also allows us to identify systems of ergodic BSDEs associated with regime-switching forward utilities of exponential and logarithmic types. For switching exponential forward utilities, it is more convenient to use  the discounted amount of wealth invested in the stock $\alpha_{t}  = X_t^\pi  \pi_t $ as a control variable, leading to the following wealth process dynamics:
\begin{eqnarray}
dX_{t}^{\alpha} = \alpha_{t}^{\top} \PAR{\theta^{\alpha_{t^-}}(V_{t})dt + dW_{t}}.
\end{eqnarray}

\begin{prop}[Exponential forward utility in regime-switching market] \label{prop:switchutexp}
Consider a family of utility random fields $\PAR{U^{i}(t, x)}_{i \in I}$ of homothetic exponential type 
\begin{eqnarray} \label{eq:expswitchut}
U^{i}(t, x) = - e^{- \gamma x} e^{y^{i}(V_{t}) - \lambda t}, \quad \gamma \in \R
\end{eqnarray}
where $\PAR{(y^{i}(.), z^{i}(.))_{i \in \I}, \lambda}$ is a Markovian solution of the system of ergodic BSDE \eqref{eq:sebsdeform}, with generator $F^{i}$ and coupling term $G^{i}$ given by 
\begin{eqnarray}
    F^{i}(v, z) &=& \frac{\gamma^{2}}{2} \dist^{2} \PAR{\Rr^i, \frac{z + \theta^{i}(v)}{\gamma}} - \frac{1}{2} \NRM{z + \theta^{i}(v)}^{2} + \frac{\NRM{z}^{2}}{2}, \label{eq:driverexpswitch} \\
    G^{i}(y) &=& \sum_{j \in \I} \PAR{e^{y^{j} - y^{i}} - 1}q^{ij} \label{eq:couplexpswitch}
\end{eqnarray}
This family generates a Markovian regime switching forward performance process $U$, and the associated optimal strategy in regime $i$ takes the form
\begin{eqnarray}
    \alpha_{t}^{i, *} = \Proj_{\Rr^{i}} \PAR{\frac{z^{i}(V_{t}) + \theta^{i}(V_{t})}{\gamma}}.
\end{eqnarray}
\end{prop}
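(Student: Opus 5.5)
The plan is to apply Theorem \ref{thm:consistswitch}, rewritten for the control variable $\alpha_t = X_t^\pi \pi_t$. We first compute the local characteristics $(\beta^i,\gamma^i)$ of each $U^i$ from the eBSDE system. We then check that the drift condition \eqref{eq:driftcondswitch} reduces exactly to the choice \eqref{eq:driverexpswitch}--\eqref{eq:couplexpswitch} of $F^i,G^i$. Finally we read off the optimal strategy from \eqref{eq:pi_regime_i}.

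\textbf{Step 1: local characteristics.} Set $Y^i_t=y^i(V_t)$ and $Z^i_t=z^i(V_t)$. By \eqref{eq:sebsdeform},
\[
dY^i_t=-\PAR{F^i(V_t,Z^i_t)+G^i(Y_t)-\lambda}dt+(Z^i_t)^\top dW_t .
\]
Itô's formula applied to $E^i_t:=e^{Y^i_t-\lambda t}$ gives
\[
dE^i_t=E^i_t\PAR{-F^i-G^i+\tfrac12\NRM{Z^i_t}^2}dt+E^i_t (Z^i_t)^\top dW_t .
\]
Since $U^i(t,x)=-e^{-\gamma x}E^i_t$, this yields
\[
\beta^i(t,x)=U^i(t,x)\PAR{-F^i-G^i+\tfrac12\NRM{Z^i_t}^2},\qquad \gamma^i(t,x)=U^i(t,x)Z^i_t .
\]
The $x$-derivatives are
\[
U^i_x=-\gamma U^i,\qquad U^i_{xx}=\gamma^2U^i,\qquad \gamma^i_x=-\gamma U^iZ^i_t .
\]

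\textbf{Step 2: the consistency condition.} We redo the Itô--Ventzel computation of Theorem \ref{thm:consistswitch} with the additive wealth $dX=\alpha^\top(\theta dt+dW)$; this is the same computation with $x\pi$ replaced by $\alpha$. The quadratic form becomes
\[
\Qr^i=\NRM{\alpha}^2+2\alpha\cdot\frac{U^i_x\theta^i+\gamma^i_x}{U^i_{xx}}=\NRM{\alpha}^2-\frac{2}{\gamma}\,\alpha\cdot(Z^i+\theta^i).
\]
Minimizing over $\alpha\in\Rr^i$ gives the minimizer $\alpha^{i,*}=\Proj_{\Rr^i}\PAR{(Z^i+\theta^i)/\gamma}$. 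The minimal value is
\[
-\NRM{\Proj_{\Rr^i}\tfrac{Z^i+\theta^i}{\gamma}}^2=\dist^2\PAR{\Rr^i,\tfrac{Z^i+\theta^i}{\gamma}}-\tfrac{1}{\gamma^2}\NRM{Z^i+\theta^i}^2 .
\]
The coupling term uses $U^j-U^i=U^i\PAR{e^{y^j-y^i}-1}$, so that $\sum_j q^{ij}(U^j-U^i)=U^iG^i(Y)$ with $G^i$ as in \eqref{eq:couplexpswitch}. Substituting into \eqref{eq:driftcondswitch} and dividing by $U^i\neq0$, the $G^i$ terms cancel on both sides. What remains is
\[
-F^i+\tfrac12\NRM{Z^i}^2=-\tfrac{\gamma^2}{2}\dist^2\PAR{\Rr^i,\tfrac{Z^i+\theta^i}{\gamma}}+\tfrac12\NRM{Z^i+\theta^i}^2 ,
\]
which is exactly \eqref{eq:driverexpswitch}. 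Hence the drift of $U(t,X_t^\alpha)$ is $\le 0$ for every admissible $\alpha$ and vanishes at $\alpha^{*}=\sum_i\alpha^{i,*}\ind_{\{\alpha_{t^-}=i\}}$.

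\textbf{Step 3: admissibility and the main obstacle.} The main obstacle is that Theorem \ref{thm:consistswitch} is stated for nonnegative utilities satisfying Inada conditions at $0$, on $\R^+$, which the exponential utility violates. I would therefore not invoke the theorem as a black box. Instead I would reuse its proof: that argument only uses $U^i_{xx}<0$ (here $\gamma^2U^i<0$) and the local-martingale property of the $dW$ and $\tilde N$ terms. The remaining checks are then direct from Theorem \ref{thm:exsebsde}, which gives $\NRM{z^i}\le Z_{\max}$.
\begin{itemize}
\item $\alpha^{i,*}$ is bounded, since $z^i$ and $\theta^i$ are bounded. So $\alpha^*\in\A$, and $X^{\alpha^*}$ solves a linear SDE with bounded coefficients.
\item Assumption \ref{ass:MM_adm} holds with a constant process $K$, since $\NRM{\gamma^{i,\perp}_x}/\ABS{U^i_x}=\NRM{(Z^i)^{\perp}}\le Z_{\max}$, and similarly for the second derivatives.
\item Strict concavity and monotonicity of $x\mapsto U^i(t,x)$ are explicit.
\end{itemize}
With these facts in hand, the supermartingale and martingale statements of Definition \ref{def:switchut} follow as in the proof of Theorem \ref{thm:consistswitch}.
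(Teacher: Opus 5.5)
Your proposal is correct and follows the paper's route: Itô's formula gives the local characteristics $(\beta^i,\gamma^i)$, then you apply the drift condition of Theorem \ref{thm:consistswitch} with $x\pi$ replaced by $\alpha$, and your computations check out, including the cancellation of the $G^i$ terms and the identity $-\NRM{\Proj_{\Rr^i} w}^2=\dist^2(\Rr^i,w)-\NRM{w}^2$. Re-running the theorem's argument instead of citing it is a worthwhile refinement that the paper's one-line proof skips, since the exponential utility is negative and violates the nonnegativity and Inada requirements of that theorem; note only that your admissibility checks tacitly use $\gamma>0$ and the bound $\NRM{z^i}\le Z_{\max}$, i.e.\ that the Markovian solution is the one constructed in Theorem \ref{thm:exsebsde}, which the proposition's statement does not explicitly assume.
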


\begin{preuve}
The proof is a straightforward application of It\^o's formula and Theorem \ref{thm:consistswitch}.
\end{preuve}

The generator \eqref{eq:driverexpswitch} coincides with the one associated to homothetic exponential forward utility from \cite{liang2017representation}. We also recover the same exponential coupling term \eqref{eq:couplexpswitch} as for regime switching forward performance process in power form. Finally, we state a similar result for utilities of logarithmic type.

\begin{prop}[Logarithmic forward utility in regime-switching market]
Consider a family of utility random fields $\PAR{U^{i}(t, x)}_{i \in I}$ of homothetic logarithmic type 
\begin{eqnarray} \label{eq:logswitchut}
U^{i}(t, x) = \ln(x) + y^{i}(V_{t}) - \lambda t,
\end{eqnarray}
where $\PAR{(y^{i}(.), z^{i}(.))_{i \in \I}, \lambda}$ is a Markovian solution of the system of ergodic BSDE \eqref{eq:sebsdeform}, with generator $F^{i}$ and coupling term $G^{i}$ given by 
\begin{eqnarray}
    F^{i}(v) &=& - \frac{1}{2} \dist^{2} \PAR{\Rr^i, \theta^{i}(v)} + \frac{1}{2} \NRM{\theta^{i}(v)}^{2}, \label{eq:driverlogswitch} \\
    G^{i}(y) &=& \sum_{j \in \I} \PAR{y^{j} - y^{i}}q^{ij}. \label{eq:coupllogswitch}
\end{eqnarray}
This family generates a Markovian regime switching forward performance process $U$, and the associated optimal strategy in regime $i$ takes the form
\begin{eqnarray}
    \alpha_{t}^{i, *} = \Proj_{\Rr^{i}} \PAR{\theta^{i}(V_{t})}.
\end{eqnarray}
\end{prop}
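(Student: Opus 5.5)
The plan is to verify the consistency condition \eqref{eq:driftcondswitch} of Theorem \ref{thm:consistswitch} directly for the logarithmic family \eqref{eq:logswitchut}, and then read off the optimal strategy from \eqref{eq:pi_regime_i}. Since $U^i(t,x)=\ln(x)+Y^i_t-\lambda t$ with $Y^i_t=y^i(V_t)$, the $x$-dependence is completely separated from the random part. Hence $U^i_x=1/x$, $U^i_{xx}=-1/x^2$, and $\gamma^i_x=0$, because the volatility $\gamma^i$ does not depend on $x$. The local characteristics are read off from the eBSDE \eqref{eq:sebsdeform} written forward:
\begin{equation*}
\beta^i(t,x)=-F^i(V_t)-G^i(Y_t)+\lambda-\lambda=-F^i(V_t)-G^i(Y_t),\qquad \gamma^i(t,x)=z^i(V_t).
\end{equation*}
The $-\lambda t$ term cancels the $+\lambda\,dt$ coming from the eBSDE dynamics. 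Assumption \ref{ass:MM_adm} then holds trivially, since $\gamma^{i,\perp}_x=\gamma^{i,\perp}_{xx}=0$.

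\textbf{Step 1: the optimal strategy.} I would compute the quadratic form \eqref{eq:defQswitch}:
\begin{equation*}
\Qr^i(t,x,\pi)=\NRM{x\pi}^2-2x^2\,\pi\cdot\theta^i(V_t),
\end{equation*}
where the strategy $x\pi$ is denoted $\alpha$, as in the statement. Minimizing over $\pi\in\Rr^i$ gives $x\pi^{i,*}=x\,\Proj_{\Rr^i}(\theta^i)$. This agrees with \eqref{eq:pi_regime_i}, which gives $\pi^{i,*}=\Proj_{\Rr^i}(\theta^i)$ in proportion form. The minimum value is
\begin{equation*}
-x^2\NRM{\Proj_{\Rr^i}\theta^i}^2=-x^2\PAR{\NRM{\theta^i}^2-\dist^2(\Rr^i,\theta^i)}.
\end{equation*}

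\textbf{Step 2: matching the drift.} Next I substitute into \eqref{eq:driftcondswitch}. The first term becomes
\begin{equation*}
-\tfrac12 U^i_{xx}\inf_\pi\Qr^i=-\tfrac12\PAR{\NRM{\theta^i}^2-\dist^2(\Rr^i,\theta^i)}=-F^i(V_t),
\end{equation*}
using \eqref{eq:driverlogswitch}. The coupling term gives $-\sum_j(U^j-U^i)q^{ij}=-\sum_j(y^j-y^i)q^{ij}=-G^i(Y_t)$ by \eqref{eq:coupllogswitch}; the $\ln x$ and $\lambda t$ parts cancel in the differences $U^j-U^i$. The sum of these two terms is exactly the $\beta^i$ found above, so \eqref{eq:driftcondswitch} holds. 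Theorem \ref{thm:consistswitch} then yields the forward performance property together with the stated optimal strategy.

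\textbf{Main obstacle.} The point needing care is the regularity and admissibility required to apply Theorem \ref{thm:consistswitch}. This means checking that $U^i$ is an It\^o random field in the right class, and that the candidate strategy $\Proj_{\Rr^i}\theta^i(V_t)$ lies in $\A$. Regularity is immediate because the $x$-dependence is $\ln x$. Admissibility follows from the uniform boundedness of $\theta^i$ in Assumption \ref{asslispch_switch}, which gives $\int_0^t\NRM{\pi^*_s}^2ds<\infty$.

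There is one subtlety. The logarithmic utility is neither nonnegative nor satisfies $U(0^+)=0$, so the stated Inada requirements must be relaxed to their usual log analogue. This only enters the concavity and monotonicity part of the proof of Theorem \ref{thm:consistswitch}, and those properties are evident here since $U^i_x=1/x>0$ and $U^i_{xx}<0$. Existence of the Markovian solution $(y,z,\lambda)$ itself follows from Theorem \ref{thm:exsebsde}. Here $F^i$ does not depend on $z$, so Assumption \ref{ass:FGgrowthass}(ii) holds with $C_z=0$, and $C_v$ is controlled by the Lipschitz constant of $\theta^i$ together with its bound. The coupling function $g(y)=y$ clearly satisfies (iii).
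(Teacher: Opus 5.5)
Your proposal is correct and matches the paper, which only says the result follows from It\^o's formula and Theorem \ref{thm:consistswitch}. Your identification $\beta^i=-F^i(V_t)-G^i(Y_t)$, $\gamma^i=z^i(V_t)$, followed by the check of \eqref{eq:driftcondswitch} using $\inf_{\pi\in\Rr^i}\Qr^i=-x^2\NRM{\Proj_{\Rr^i}\theta^i}^2$, carries out exactly that verification. One small tidy-up: with $\alpha=x\pi$ the minimiser is $x\,\Proj_{\Rr^i}(\theta^i(V_t))$, so the stated formula is the optimal proportion $\pi^{i,*}$ (as you implicitly read it), not the amount invested.
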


The generator \eqref{eq:driverlogswitch} also coincides with the one from \cite{liang2017representation} in the single state setting. The coupling term \eqref{eq:coupllogswitch} however is not exponential, because of the additive form of the logarithmic utility \eqref{eq:logswitchut}.

\section{Numerical results} \label{section:numresswitch}

In this section, we present numerical experiments illustrating the performance of Algorithms \ref{alg:locebsdeswitch} (LAeBSDE) and \ref{alg:DGM} (DGM) for the approximation of the Markovian solution $\PAR{(y^{i}(.), z^{i}(.))_{i \in \I}, \lambda}$ of the system of ergodic BSDEs \eqref{eq:rdtBSDEswitch}, with a particular emphasis on systems arising from regime-switching utilities of power type, see Corollary \ref{prop:powerswitchut}.

\paragraph{Common numerical setting.} Numerical experiments are conducted using Intel(R) Xeon(R) CPU @ 2.20GHz with 25GB of RAM. Unless otherwise specified, the stochastic factor $V$ is an Ornstein-Uhlenbeck (OU) process
\begin{eqnarray} \label{eq:OUswitch}
dV_{t} = \mu \PAR{m - V_{t}} dt + \kappa^{\top}dW_{t}, \quad V_{0} = v_{0},
\end{eqnarray}
with $m \in \R^{d'}$, $\mu \in \R^{d' \times d'}$ and $\kappa \in \R^{d' \times d}$, which satisfies the dissipativity Assumption \ref{ass:weakdissass}. The invariant measure is Gaussian, namely 
\begin{eqnarray}
\N \PAR{m, \int_{0}^{\infty} e^{-\mu s} \kappa \kappa^{\top} e^{-\mu^{\top}s}ds}.
\end{eqnarray}
Points for the DGM and for validation error are sampled from this invariant distribution. Both solvers use feedforward neural networks with two hidden layers of $20 + Id$ neurons and hyperbolic tangent activations. We checked that increasing the depth and width does not improve the accuracy in our tests. The networks are trained by stochastic gradient descent using Adam optimize with initial learning rate $7 \times 10^{-4}$, batch size $B=100$, and $10000$ gradient descent steps. The algorithms are implemented in Python with \textit{Tensorflow} library. The code of both solvers is available on github : \url{https://github.com/gubrx/DeepSolvers-Ergodic-BSDE-Systems}.

\subsection{Some explicitly solvable examples}  \label{sect:4_explicit}

We first construct a family of systems of ergodic BSDEs admitting explicit Markovian solutions, which will serve as benchmarks for both algorithms. The construction starts from a family of ansatz $(y^{i})_{i \in \I}$ satisfying the condition of uniqueness of Theorem \ref{thm:exsebsde}, and a prescribed ergodic cost $\lambda$. We then define the market price of risk so that the ergodic PDE system \eqref{PDEequation} is satisfied. We consider the unconstrained power-type generator of Proposition \ref{prop:powerswitchut}, namely $\Pi = \R^{d}$ and
\begin{eqnarray} \label{eq:gensect4rappel}
    F^{i}(v, z) = \frac{\delta}{2(1 - \delta)} \NRM{z + \theta^{i}(v)}^{2} + \frac{\NRM{z}^{2}}{2}, \quad
    G^{i}(y) = \sum_{j \in I} \PAR{e^{y^{j} - y^{i}} - 1}q^{ij},
\end{eqnarray}
where $\delta\in(0,1)$. 

\begin{prop} \label{prop:exemplegen}
Let $(y^{i}(.))_{i \in \I}$ be a family of $C^{2}$ functions such that, for all $i \in \I$:
\begin{itemize}
\item $y^{i}$ has a bounded first derivative,
\item $\ABS{y^{i} - y^{j}}$ is bounded uniformly in $i$,
\item $\Lr y^{i}$ is bounded.
\end{itemize}
Set $z^i(v)=\kappa \nabla y^i(v) \in \R^d$ and let $\lambda\in\mathbb R$ be such that
\begin{eqnarray} \label{eq:lambda_cond_ex}
\lambda > \sup_{i\in I,\,v} \SBRA{\Lr y^i(v)+G^i(y(v))+\frac{1}{2}\NRM{z^i(v)}^2}.
\end{eqnarray}
Let $\mathbf{1} \in \R^d$ denote the unit vector and define
\begin{eqnarray}
\label{eq:mprexplsol}
\theta^i(v) = -z^i(v)+ \mathbf{1} \sqrt{\frac{2(1-\delta)}{\delta}\PAR{\lambda-\Lr y^i(v)-G^i(y(v))-\frac12\NRM{z^i(v)}^2}}.
\end{eqnarray}
Then $\PAR{(y^{i}(.),z^{i}(.))_{i \in \I}, \lambda}$ is the unique Markovian solution of the systems of ergodic BSDEs \eqref{eq:sebsdeform} with generator and coupling term given by \eqref{eq:gensect4rappel} and market price of risk given by \eqref{eq:mprexplsol}.
\end{prop}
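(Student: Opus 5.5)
We will argue in two stages: first verify that the prescribed triplet solves the ergodic PDE system \eqref{PDEequation}, hence (via Itô's formula) the system of ergodic BSDEs \eqref{eq:sebsdeform}; then check the hypotheses of Theorem \ref{thm:exsebsde} to obtain uniqueness in the class of Markovian solutions with sublinear growth, bounded $z$ and bounded differences.

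\textbf{Step 1 (algebraic verification).} By \eqref{eq:lambda_cond_ex}, the quantity under the square root in \eqref{eq:mprexplsol} is strictly positive, so $\theta^i$ is well defined. Moreover $z^i+\theta^i=\mathbf{1}\sqrt{\cdot}$, so that
\begin{equation*}
\frac{\delta}{2(1-\delta)}\NRM{z^i+\theta^i}^2=\frac{\delta}{2(1-\delta)}\NRM{\mathbf 1}^2\,\frac{2(1-\delta)}{\delta}\PAR{\lambda-\Lr y^i-G^i(y)-\tfrac12\NRM{z^i}^2}.
\end{equation*}
With the normalization $\NRM{\mathbf 1}=1$ implicit in calling $\mathbf 1$ a unit vector, this gives $F^i(v,z^i(v))=\lambda-\Lr y^i(v)-G^i(y(v))$. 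This is exactly \eqref{PDEequation} with $z^i=\kappa\nabla y^i$ (up to the transpose convention in the paper).

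\textbf{Step 2 (from the PDE to the eBSDE).} Since $y^i\in C^2$, Itô's formula applied to $y^i(V_t^{v_0})$ on $[t,T]$ gives
\begin{equation*}
y^i(V_T)-y^i(V_t)=\int_t^T\Lr y^i(V_s)\,ds+\int_t^T z^i(V_s)^\top dW_s.
\end{equation*}
Substituting $\Lr y^i=\lambda-F^i-G^i$ yields \eqref{eq:sebsdeform}. The stochastic integral is a true martingale because $z^i$ is bounded, as $\nabla y^i$ is bounded.

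\textbf{Step 3 (uniqueness).} To invoke Theorem \ref{thm:exsebsde}, we check its assumptions for the generator \eqref{eq:gensect4rappel}. Assumption \ref{ass:weakdissass} holds for the OU factor, and Assumption \ref{ass:MC} holds by hypothesis on $\Qr$. For Assumption \ref{ass:FGgrowthass}(iii), take $g(x)=e^x-1$: it is increasing, $C^1$, satisfies $g(0)=0$, and $e^x-e^{-x}=2\sinh x\ge x$ for $x\ge0$. For (i)–(ii), note the following:
\begin{itemize}
\item $\theta^i$ is bounded, since $z^i$, $\Lr y^i$ and $G^i(y)$ are bounded, the latter because the differences $y^j-y^i$ are bounded. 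Hence $|F^i(v,0)|\le \frac{\delta}{2(1-\delta)}\sup\NRM{\theta^i}^2$.
\item Local Lipschitz continuity in $z$ is immediate from the quadratic form.
\item Lipschitz continuity in $v$ with a factor $1+\NRM z$ requires $\theta^i$ to be Lipschitz.
\end{itemize}

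\textbf{Main obstacle.} The Lipschitz property of $\theta^i$ is the key difficulty, since it involves the derivatives of $\nabla y^i$, $\Lr y^i$ and $G^i(y)$. The square root is harmless: its argument is bounded below by $\lambda-\sup[\cdot]>0$. We would therefore impose, or read as implicit, that $\nabla y^i$ and $\Lr y^i$ are Lipschitz; for instance, for $C^3$ ansatz with bounded derivatives. We also need the constraint $C_v<C_\mu$, which in general can only be arranged by tuning parameters (e.g.\ the mean reversion $\mu$ or the scale of the ansatz). We would state this as a standing compatibility condition, exactly as it is used in Theorem \ref{thm:exsebsde}.

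Finally, $y^i$ has sublinear growth since $\nabla y^i$ is bounded, $z^i$ is bounded, and the $y^i-y^j$ are bounded. After fixing $y_0:=y^{i_0}(v_0)$, Theorem \ref{thm:exsebsde} then identifies $\PAR{(y^i,z^i)_{i\in\I},\lambda}$ as the unique Markovian solution.
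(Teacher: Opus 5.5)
The paper gives no written proof of this proposition. It only says that $\theta^i$ is chosen so that the ergodic PDE \eqref{PDEequation} holds, and that the ansatz is meant to meet the uniqueness conditions of Theorem \ref{thm:exsebsde}. Your Steps 1--2 carry out exactly that construction and are correct. This includes the caveat $\NRM{\mathbf 1}=1$ (with the all-ones vector you pick up a factor $d$) and the normalization $y_0:=y^{i_0}(v_0)$, without which uniqueness only holds up to a common additive constant.

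Your diagnosis in Step 3 is also correct, and it is a gap in the proposition as stated rather than in your reasoning. Bounded $\nabla y^i$, bounded $\Lr y^i$ and bounded differences make $\theta^i$ bounded. That gives Assumption \ref{ass:FGgrowthass}(i) and the $z$-part of (ii), but neither the Lipschitz bound in $v$ (which needs $\nabla y^i$ and $\Lr y^i$ Lipschitz) nor $C_v<C_\mu$. So Theorem \ref{thm:exsebsde} cannot be quoted under the stated hypotheses. What you prove is the proposition with these extra conditions, which the paper never checks. Your reading of $g(y)-g(-y)\ge y$ as a condition for $y\ge 0$ only is also the only sensible one: since $g(y)-g(-y)$ is odd, imposing it for all real $y$ would force equality, which $e^y-1$ violates.

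Two points to tighten. First, a $C^3$ ansatz with bounded derivatives does not make $\Lr y^i$ Lipschitz, because the drift of the factor grows linearly. The gradient $\nabla\Lr y^i$ contains the term $D^2y^i(v)\,\mu(v)$, so you also need, for instance, $\NRM{v}\,\NRM{D^2y^i(v)}$ bounded. There are bounded-$\Lr y$, bounded-derivative examples where this fails, e.g. $y'(v)=\sin(v^{3/2})/v$ for large $v$. The condition does hold for the one-dimensional profiles (T) and (R) of Example \ref{exe:explicit_profiles}, whose derivatives decay fast enough. By contrast, $C_v<C_\mu$ remains a real restriction on $(\delta,\lambda,w,\mu)$.

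Second, Theorem \ref{thm:exsebsde} is literally stated with the explicit bounds $Z_{\max}$ and $C_Y$ attached to the unique solution. You instead use uniqueness in the larger class of linear-growth $y$, bounded $z$ and bounded $y^i-y^j$. That is the right reading, since the Girsanov linearization (as in the proof of Theorem \ref{thm:BSDERandomtimeswitch}) only uses these properties, but you should say so explicitly. Otherwise you would have to verify $\NRM{\kappa\nabla y^i}\le Z_{\max}$ and $\ABS{y^i-y^j}\le C_Y$, which your hypotheses do not give.

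The same observation suggests a way around the extra hypotheses. In Theorem \ref{thm:exsebsde}, the $v$-regularity of $F^i$ and the condition $C_v<C_\mu$ serve the existence part, namely the a priori bound on $z$. Here existence is explicit, so proving uniqueness directly in that class should avoid them. The cost is that the argument is no longer a direct citation of the theorem.
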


In the following numerical experiments, the factor process is the Ornstein-Uhlenbeck process \eqref{eq:OUswitch} with $V$ valued in $\R^{d'}$ and $W$ is a $d$-dimensional Brownian motion. We consider benchmark solutions of the form
\begin{eqnarray} \label{eq:explicit_yi}
y^i(v)=\psi(v)+c_i+a_i\phi(v), \quad c_i,a_i\in\mathbb R
\end{eqnarray}
where $\psi$ is sublinear and $\phi$ is bounded and smooth with bounded first and second derivatives. The differences $y^{i} - y^{j}$ are then explicit bounded functions of $\phi$ and the coupling term is given by
\begin{eqnarray}
G^i(y(v)) = \sum_{j\in I}\PAR{\exp\PAR{c_j-c_i+(a_j - a_i)\phi(v)}-1}q^{ij}.
\end{eqnarray}
We fix a direction $\ell\in\mathbb R^{d'}$ and consider functions of the projection $\ell\cdot v$. 

\begin{exe}
\label{exe:explicit_profiles}
Let $w>0$, $\ell\in\mathbb R^{d'}$. We consider the two profiles
\begin{enumerate}
\item[(T)] \emph{Hyperbolic tangent}
\[
\phi_T(v)=\tanh(w\,\ell\cdot v).
\]
\item[(R)] \emph{Rational}
\[
\phi_R(v)=\frac{\ell\cdot v}{\sqrt{1+w^2(\ell\cdot v)^2}}.
\]
\end{enumerate}
with $w > 0$ and $\psi \equiv 0$ in the sequel, so that Proposition \ref{prop:exemplegen} can be applied.
\end{exe}
Since the training losses of the two methods correspond to different objectives, they are not directly comparable. We therefore consider $L^2$ validation errors computed under the invariant measure $\nu$ of the stochastic factor. Given approximations $\bar{y}, \, \bar{z}$, and $(v_{1}, \dots, v_{M})$ an i.i.d sample from $\nu$, we define
\begin{eqnarray}
    \E_{L^{2}(\nu)}(\bar{y}, M) = \frac{1}{MI} \sum_{m=1}^{M} \sum_{i=1}^{I} \ABS{y^{i}(v_{m}) - \bar{y}^{i}(v_{m})}^{2}, \quad \E_{L^{2}(\nu)}(\bar{z}, M) = \frac{1}{MI} \sum_{m=1}^{M} \sum_{i=1}^{I} \NRM{z^{i}(v_{m}) - \bar{z}^{i}(v_{m})}^{2},
\end{eqnarray}
with $M=1000$ in all reported experiments.

\paragraph{Hyperbolic tangent example (T) -} We first consider a two-regime example ($I=2$) with a one-dimensional OU process ($d'=1$) initialized at $v_{0} = 0$, with mean-reversion coefficient $\mu=2$, asymptotic mean $m=0$ and volatility coefficient $\kappa = 0.65$.  The transition rates are $q_{12} = 0.4$, $q_{21} = 0.8$, and the reference solution is
\begin{eqnarray*}
    y^{1}(v) = 1 - 0.3 \tanh(0.8v), \quad y^{2}(v) = 1 + 0.3 \tanh(0.8 v).
\end{eqnarray*}
normalized by $y^{1}(0) = 1.$ The ergodic cost is set to $\lambda = 0.811$, which satisfies condition \eqref{eq:lambda_cond_ex}. For the LAeBSDE algorithm, the discretization parameters are $h=0.01$ and the minimal horizon $T_{0} = 1$.

\begin{figure}[!ht]
\centering
\begin{subfigure}[t]{0.485\textwidth}
\centering
\includegraphics[width=\linewidth]{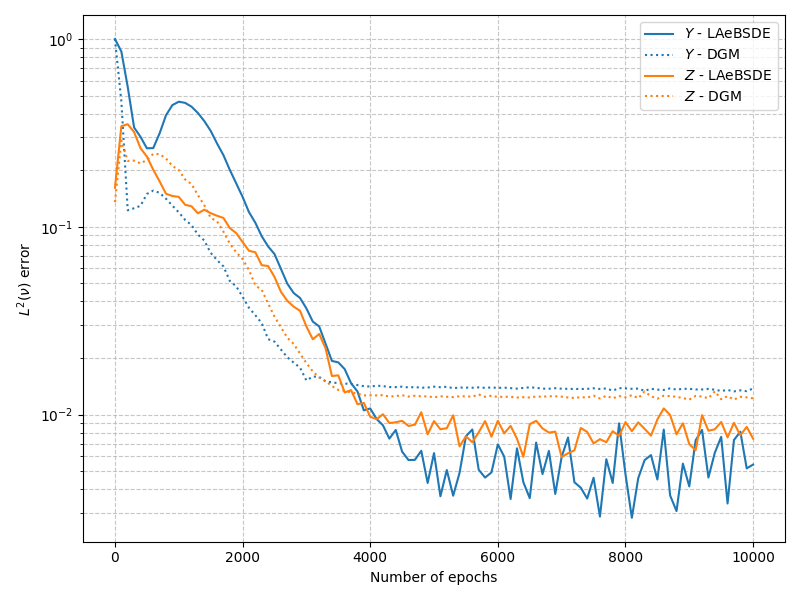}
\caption{Convergence of $\mathcal E_{L^2(\nu)}(\bar y,1000)$ and $\mathcal E_{L^2(\nu)}(\bar z,1000)$ as a function of the number of epochs.}
\label{fig:tanh_l2_conv}
\end{subfigure}\hspace{0.02\textwidth}
\begin{subfigure}[t]{0.485\textwidth}
\centering
\includegraphics[width=\linewidth]{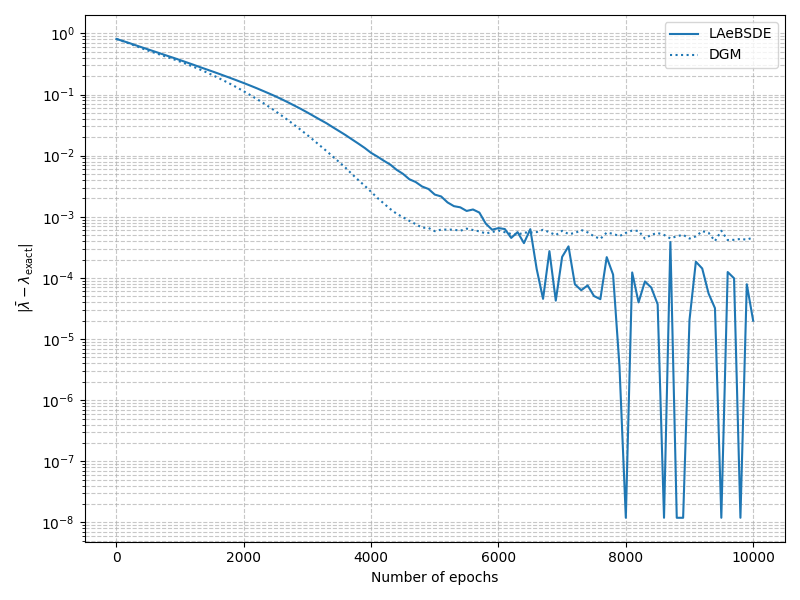}
\caption{Convergence of $|\bar\lambda-\lambda|$ as a function of the number of epochs.}
\label{fig:tanh_lambda_conv}
\end{subfigure}
\caption{Error convergence for the hyperbolic tangent example (H).}
\label{fig}
\end{figure}
The training behavior of both solvers is reported in Figure \ref{fig:tanh_l2_conv}. Both methods show a fast initial decrease of the $L^2(\nu)$-errors, which stabilize around $10^{-2}$ after $4000$ epochs. The LAeBSDE approximation provides the smallest final errors for both $y$ and $z$. Figure \ref{fig:tanh_lambda_conv} illustrates the error convergence of the ergodic cost during training. For the DGM, the ergodic constant error decreases sharply and stabilizes around $10^{-3}$ once the error on $y$ and $z$ have themselves stabilized. This is consistent with the structure of the DGM solver, where $\bar{\lambda}$ is approximated by the invariant-measure representation \eqref{eq:lambda_stat}, whose accuracy is driven by that of $(\bar{y}, \bar{z})$. By contrast, in the LAeBSDE, $\bar{\lambda}$ is a trainable scalar optimized jointly with the networks; its trajectory displays spikes, inherited from the random horizon and from the time-discretization error that enters the loss at each gradient step. Ultimately, it reaches a higher accuracy, of order $10^{-4}$. 

Figure \ref{fig:tanh_yz} compares the approximated solutions $\bar{y}^{i}$ and $z^{i}$ produced by the two solvers against the exact solution, over the main support of the invariant law. Both approaches recover the qualitative structure of the solution, the two regimes crossing near $v = 0$ where the normalization $y^{1}(0) = 1$ is enforced, while the gradients $z^{1}$ and $z^{2}$ are of opposite sign, small, and nearly flat over the relevant range. Overall the two methods provide approximations close to the exact solution on the major part of the invariant distribution and degrades only mildly towards the tails, where samples are sparce.
\begin{figure}[!ht]
\centering
\begin{subfigure}[t]{0.485\textwidth}
\centering
\includegraphics[width=\linewidth]{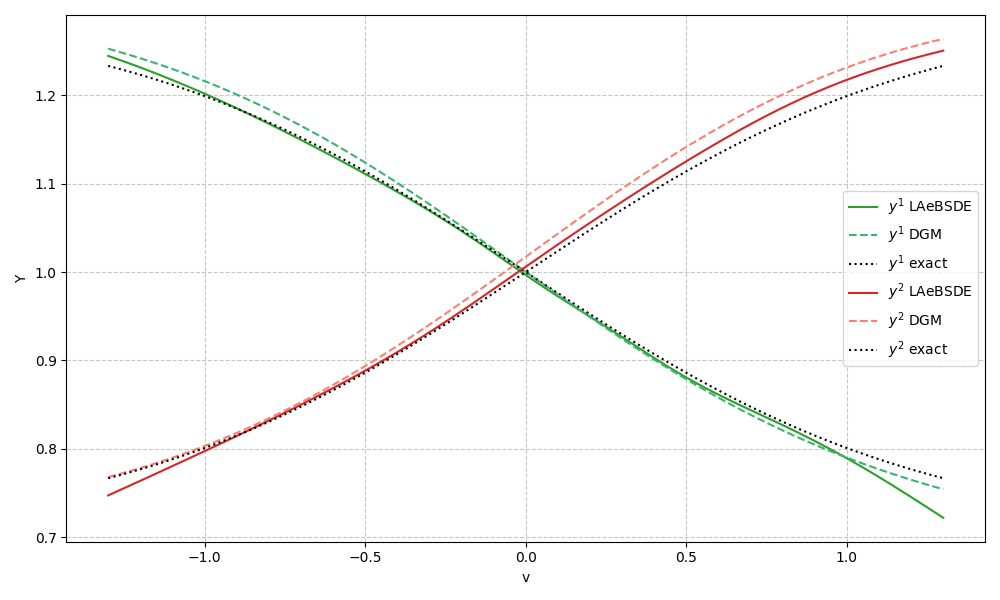}
\caption{Approximated solutions $\bar{y}^{1}, \bar{y}^{2}$ against the exact solution.}
\label{fig:tanh_y}
\end{subfigure}\hspace{0.02\textwidth}
\begin{subfigure}[t]{0.485\textwidth}
\centering
\includegraphics[width=\linewidth]{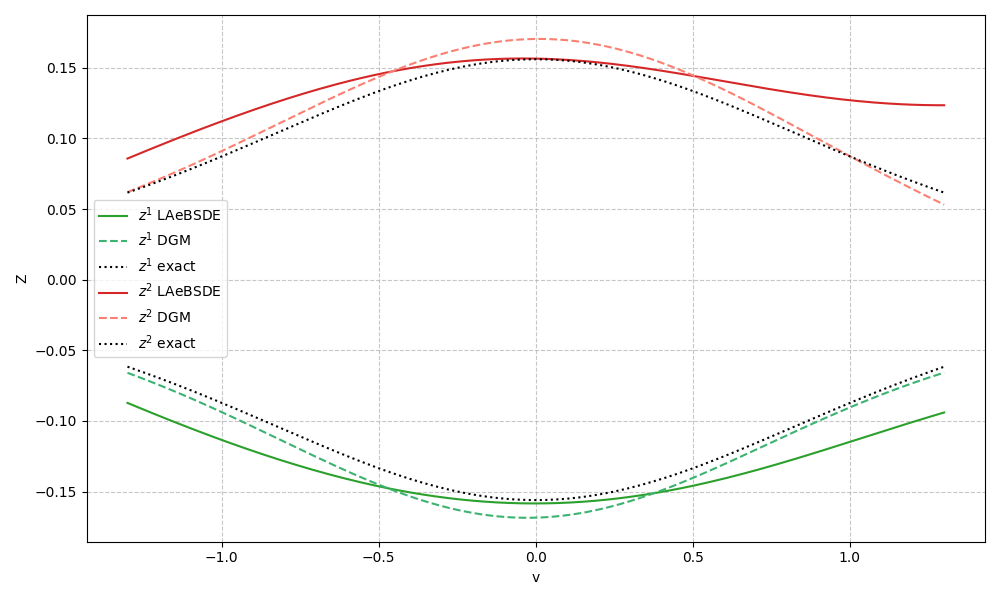}
\caption{Approximated gradient component solution $\bar{z}^{1}, \bar{z}^{2}$ against the exact solution.}
\label{fig:tanh_z}
\end{subfigure}
\caption{Regime-wise solutions $y^{i}, \, z^{i}$ as functions of the factor $v$, for the LAeBSDE and DGM solvers compared with the exact
solution.}
\label{fig:tanh_yz}
\end{figure}

\paragraph{Time discretization parameter for LAeBSDE} The sensitivity of the LAeBSDE scheme with respect to the minimal horizon $T_{0}$ and to the time step $h$ is reported in Tables \ref{tab:tanh_TH} and \ref{tab:tanh_h}.  Increasing $T_{0}$ from $0.1$ to $2.0$ improves all errors, particularly the error on the ergodic constant, which decreases from order $10^{-2}$ to $10^{-7}$ over the tested range. This behavior is coherent with the ergodic nature of the problem: increasing $T_{0}$ lengthens the simulated trajectories, hence enriches the loss with information on the long-run
behavior of the solution. From a numerical point of view, the term $\bar{\lambda} \bar{\tau}$ appearing in the aggregation of error terms \eqref{eq:local_err_LA} acts as a long-time average over the whole trajectory, which makes the loss sensitive to a misspecification of $\bar{\lambda}$. However, this effect saturates for large horizons, see the case $T_{0} = 5$ in Table \ref{tab:tanh_TH}, when the gain in long-run information is balanced by the accumulation of discretization errors. 
\begin{table}[h!]
\centering
\begin{tabular}{c|cccc}
\toprule
$T_{0}$ & $\mathcal E_{L^2(\nu)}(\bar y, M)$ & $\mathcal E_{L^2(\nu)}(\bar z, M)$ & $|\bar\lambda - \lambda|$ & time (s) \\
\midrule
0.10 & $2.25\times 10^{-2}$ & $1.33\times 10^{-2}$ & $1.25\times 10^{-3}$ & $453.9$ \\
0.25 & $1.74\times 10^{-2}$ & $1.03\times 10^{-2}$ & $7.33\times 10^{-3}$ & $460.7$ \\
0.50 & $1.10\times 10^{-2}$ & $9.97\times 10^{-3}$ & $3.11\times 10^{-3}$ & $502.5$ \\
1.00 & $2.96\times 10^{-3}$ & $8.91\times 10^{-3}$ & $1.19\times 10^{-8}$ & $542.7$ \\
2.00 & $4.68\times 10^{-3}$ & $8.42\times 10^{-3}$ & $1.19\times 10^{-8}$ & $648.6$ \\
5.00 & $5.02\times 10^{-2}$ & $4.22\times 10^{-3}$ & $1.19\times 10^{-8}$ & $873.5$ \\
\bottomrule
\end{tabular}
\caption{LAeBSDE errors and training times for different values of the minimal horizon $T_{0}$, with $h=0.01$, in example (T).}
\label{tab:tanh_TH}
\end{table}
As shown in Table \ref{tab:tanh_h}, refining the time step $h$ improves the discretization and yields smaller errors on $\bar y$, $\bar z$, and $\lambda$, but the gain is moderate. This comes at a significant computational cost: decreasing $h$ increases the number of time steps before the random horizon, and therefore substantially increases the training time.
\begin{table}[h!]
\centering
\begin{tabular}{c|cccc}
\toprule
$h$ & $\mathcal E_{L^2(\nu)}(\bar y, M)$ & $\mathcal E_{L^2(\nu)}(\bar z, M)$ & $|\bar\lambda - \lambda|$ & time (s) \\
\midrule
0.005 & $5.95\times 10^{-3}$ & $8.28\times 10^{-3}$ & $1.38\times 10^{-7}$ & 722.65 \\
0.010 & $7.38\times 10^{-3}$ & $7.83\times 10^{-3}$ & $1.38\times 10^{-7}$ & 405.87 \\
0.020 & $4.10\times 10^{-3}$ & $7.70\times 10^{-3}$ & $4.37\times 10^{-4}$ & 298.19 \\
0.050 & $5.25\times 10^{-3}$ & $9.03\times 10^{-3}$ & $1.43\times 10^{-4}$ & 192.79 \\
0.100 & $1.08\times 10^{-2}$ & $1.07\times 10^{-2}$ & $1.51\times 10^{-4}$ & 169.16 \\
\bottomrule
\end{tabular}
\caption{LAeBSDE errors with respect to the time step $h$ in example (T), with $T_{0} = 1$.}
\label{tab:tanh_h}
\end{table}

\paragraph{Switching rate sensitivity} We next evaluate the behavior of the two algorithms as the number of regimes $I$ increases. For each value of $I$, the transition matrix has diagonal entries $-0.8$ and constant off-diagonal entries $q^{ij} = 0.8/(I-1)$, $i\neq j$. The results are gathered in Table \ref{tab:regimes}. Both methods remain accurate, the LAeBSDE yields the smallest errors on $\bar y$ and $\bar z$ for all values of $I$, and these errors stay essentially constant as the number of regimes grows, the error on $\bar y$ remaining of order $10^{-3}$ from $I=2$ to $I=20$. The DGM, in contrast, presents a mild increase of its error on $\bar y$ with the number of regimes, from $8.62\times10^{-3}$ at $I=2$ to $1.14\times10^{-2}$ at $I=20$. This is coherent with the deterioration of approximation quality of deep learning PDE with the dimension of the problem, as observed in \cite{sirignano2018dgm}. The errors on $\bar z$ remain of order $10^{-3}$ for both methods and do not display this trend. The ergodic constant is recovered up to numerical precision by both methods, with an error at the level of round-off in almost all cases.
The main difference lies in the computational cost. The cost of the LAeBSDE grows rapidly with $I$ since each gradient step requires the simulation of a batch of trajectories up to their random horizon and the evaluation of the $I$-dimensional coupling term along each trajectory. The DGM solves the stationary PDE system directly and avoids any time discretization, thus remaining 10 times faster compared to the LAeBSDE.
\begin{table}[h!]
\centering
\begin{tabular}{c|c|l|ccccc}
\toprule
$I$ & $\lambda$ & Method  & $\mathcal E_{L^2(\nu)}(\bar y, M)$ & $\mathcal E_{L^2(\nu)}(\bar z, M)$ & $|\bar\lambda-\lambda^\star|$ & time (s) \\
\midrule
\multirow{2}{*}{$2$}  & \multirow{2}{*}{$8.11\times10^{-1}$} & LAeBSDE & $4.44\times10^{-3}$ & $7.98\times10^{-3}$ & $5.53\times10^{-5}$ & $420.9$  \\
                      &                                      & DGM     & $8.62\times10^{-3}$ & $1.03\times10^{-2}$ & $1.19\times10^{-8}$ & $30.1$   \\
\midrule
\multirow{2}{*}{$5$}  & \multirow{2}{*}{$6.03\times10^{-1}$} & LAeBSDE & $2.85\times10^{-3}$ & $6.05\times10^{-3}$ & $1.19\times10^{-8}$ & $522.9$  \\
                      &                                      & DGM      & $1.10\times10^{-2}$ & $9.08\times10^{-3}$ & $1.19\times10^{-8}$ & $49.4$   \\
\midrule
\multirow{2}{*}{$10$} & \multirow{2}{*}{$5.70\times10^{-1}$} & LAeBSDE  & $3.82\times10^{-3}$ & $5.95\times10^{-3}$ & $1.19\times10^{-8}$ & $1199.3$ \\
                      &                                      & DGM     & $1.11\times10^{-2}$ & $8.10\times10^{-3}$ & $1.19\times10^{-8}$ & $110.9$  \\
\midrule
\multirow{2}{*}{$20$} & \multirow{2}{*}{$5.57\times10^{-1}$} & LAeBSDE & $4.52\times10^{-3}$ & $5.65\times10^{-3}$ & $1.19\times10^{-8}$ & $3421.9$ \\
                      &                                      & DGM    & $1.14\times10^{-2}$ & $8.18\times10^{-3}$ & $1.19\times10^{-8}$ & $261.8$  \\
\bottomrule
\end{tabular}
\caption{Approximation errors across the number of regimes $I$ ($M=10^5$ Monte-Carlo samples).}
\label{tab:regimes}
\end{table}

\subsection{Regime switching power forward utility} \label{sect:4_powerlinear}
We now focus on the systems of ergodic BSDEs associated with regime-switching forward utilities in power form. We consider a market alternating between two economic regimes: a \emph{growth} regime ($i = 1$) with a high risk premium, and a more \emph{conservative} regime ($i = 2$) with a lower one. The single risky asset is driven by a one-dimensional Brownian motion, as in \eqref{eq:stock_price}, with regime-dependent volatility $\sigma^{i} > 0$.  When there is no constraint on the portfolio, that is $\Rr^i = \R^{d}$ for all $i\in \I$, the generator \eqref{eq:powergenswitch} reduces to
\begin{eqnarray} \label{eq:powergen_sect4}
F^{i}(v, z) = \frac{\delta}{2(1 - \delta)} \NRM{z + \theta^{i}(v)}^{2} + \frac{\NRM{z}^{2}}{2}.
\end{eqnarray}
We consider a truncated affine risk premium $\theta^{i}(v) = \varphi_{b}( a_{i} + \theta_{i} v)$ for $i \in \I$,  with $a_{i} \in \R$, $\theta_{i} > 0$ and $b > 0$, where $\varphi_{b}$ denotes the projection on the centered ball of radius $b$. Denoting $\theta_{\max} = \underset{i \in \I}{\max \theta_{i}}$, Assumption \ref{ass:FGgrowthass} holds with $C_{v} = \frac{\ABS{\delta}}{1 - \delta} \max(1, b)\theta_{\max}$. 

We assign a higher market price of risk and a lower stock's volatility to the growth regime, namely
\begin{eqnarray*}
a_{1} = 0.4, \quad a_{2} = -0.1 
\qquad
\theta^{1} = 0.2, \quad \theta^{2} = 0.05,
\qquad
\sigma^{1} = 0.15, \quad \sigma^{2} = 0.3.
\end{eqnarray*}
The remaining parameters are set to $\kappa = 0.8$, $\mu = 1.5$, $T_{0} = 1$, $v_{0} = m = 0$, $\delta = 0.25$, and $b = 1$, and the normalization is fixed to $y^{1}(0) = 1$. The stochastic factor $V$ then admits the Gaussian invariant law $\N(0, 0.213)$, whose $95\%$ interval is approximately $D = \SBRA{-0.91, 0.91}$. Over this range, the growth premium $\theta^{1}(v) \in \SBRA{0.22, 0.58}$ remains positive, with mean $0.4$, and the conservative premium $\theta^{2}(v) \in \SBRA{-0.15, -0.06}$ remains negative with mean $-0.1$. The regime process is the continuous-time Markov chain $\alpha$ with asymmetric transition rate matrix
\begin{eqnarray}
Q = \begin{pmatrix}
-0.3 & 0.3 \\
1.0 & -1.0
\end{pmatrix}
\end{eqnarray}
Its unique invariant distribution is $\bar{\alpha} = \PAR{ \tfrac{10}{13},\ \tfrac{3}{13} }$, the mean holding times are approximately $3.33$ in the growth regime and $1$ in the conservative one. This model that the market spends, on average, more than three times as long in expansion as in stagnation.

Since no closed-form solution is available in this example, we assess the quality of the learned solutions through the residuals of the ergodic PDE \eqref{eq:residualsDGM}. For each regime $i$, define 
\begin{eqnarray}
    \Rr^{i}(\bar{y}, \bar{z}, v) = \Lr \bar{y}^{i}(v) - F^{i}(v, \bar{z}^{i}(v)) + \sum_{i=1}^{I} q^{ij} g(\bar{y}^{j}(v) - \bar{y}^{i}(v)) - \bar{\lambda}.
\end{eqnarray}
 and the mean squared residual under the invariant measure, estimated by Monte Carlo over a sample $(v_{1}, \dots, v_{M})$ from $\nu$,
\begin{eqnarray}
    \E_{PDE}(\bar{y}, \bar{z}, M) = \frac{1}{M} \frac{1}{I} \sum_{m=1}^{M} \sum_{i=1}^{I} \ABS{\Rr^{i}(\bar{y}, \bar{z}, v_{m})}^{2}.
\end{eqnarray}
We also report the normalization error $\E_{\text{norm}} = \ABS{\bar{y}^{i_{0}}(v_{0}) - y_{0}}$.  Figure \ref{fig:losses_power_linear} displays the training losses of both solvers, which decrease and stabilize around $10^{-2}$ for the LAeBSDE and $10^{-5}$ for the DGM, as well as the trajectories of the estimators $\bar\lambda$, which converge to consistent values across the two methods.

\begin{figure}[!ht]
\centering
\begin{subfigure}[t]{0.485\textwidth}
\centering
\includegraphics[width=\linewidth]{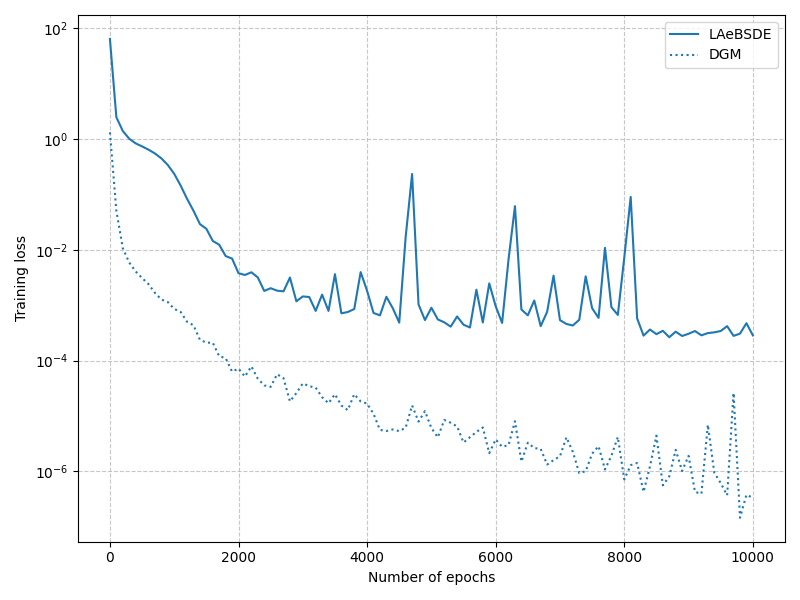}
\caption{Empirical loss functions for the LAeBSDE and DGM during training.}
\label{fig:loss_PL}
\end{subfigure}\hspace{0.02\textwidth}
\begin{subfigure}[t]{0.485\textwidth}
\centering
\includegraphics[width=\linewidth]{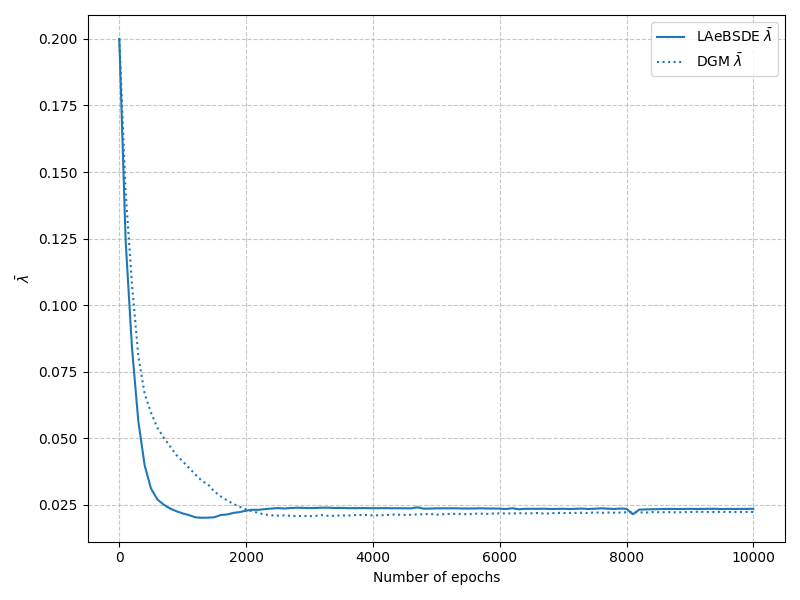}
\caption{Convergence of $\bar{\lambda}$ as a function of the number of epochs.}
\label{fig:loss_lambda_PL}
\end{subfigure}
\caption{Convergence of empirical loss and $\bar{\lambda}$ estimation for both the LAeBSDE and DGM.}
\label{fig:losses_power_linear}
\end{figure}

This offers a good framework for the study of the associated regime-switching forward utility \eqref{eq:Usum} generated by a family of utility random fields in power form as \eqref{eq:powerswitchut}, which takes the form
\begin{eqnarray} \label{eq:exU2switchpower}
U(t, x) = \frac{x^{\delta}}{\delta} e^{y^{\alpha_{t}}(v) - \lambda t}, \quad y^{\alpha_{t}}(v) = \sum_{i \in \I} y^{i}(v) \ind_{\BRA{\alpha_{t}=i}}.
\end{eqnarray}
Simulating the jump times of the two Poisson processes $N^{1, 2}$ and $N^{2, 1}$, and the stochastic factor with its Euler scheme, we are able to simulate the regime-switching forward utility for all time $t \geq 0$. Figure \ref{fig:ut_traj} displays a realization of the regime-switching forward utility together with its non-switching counterparts, the vertical lines marking the jump times of the regime chain. The switching utility is higher than the regime frozen utility in regime $1$, and lower in regime $2$. This provide a trajectory of the regime switching forward utility that sits in between the trajectories of regime frozen preferences, this because of the coupling and the shared ergodic cost solution. 
\begin{figure}[!ht]
\centering
\begin{subfigure}[t]{0.485\textwidth}
\centering
\includegraphics[width=\linewidth]{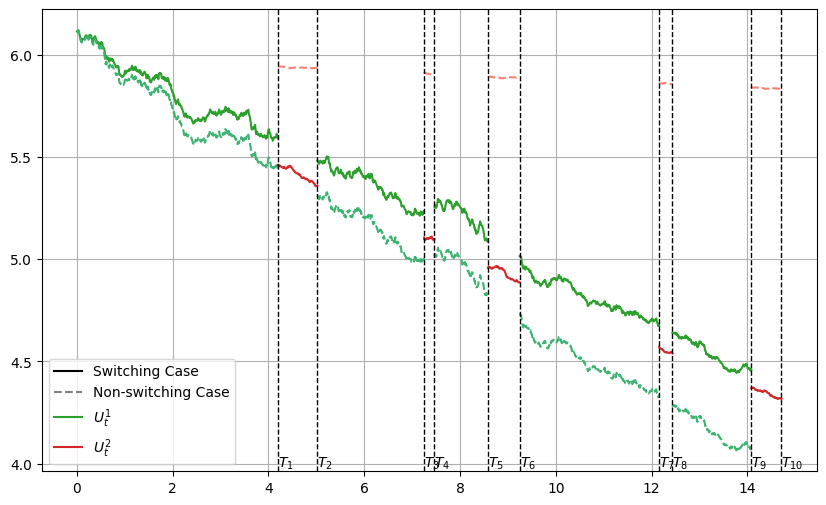}
\caption{Trajectory of the regime switching forward utility \eqref{eq:exU2switchpower}}
\label{fig:ut_traj}
\end{subfigure}\hspace{0.02\textwidth}
\begin{subfigure}[t]{0.485\textwidth}
\centering
\includegraphics[width=\linewidth]{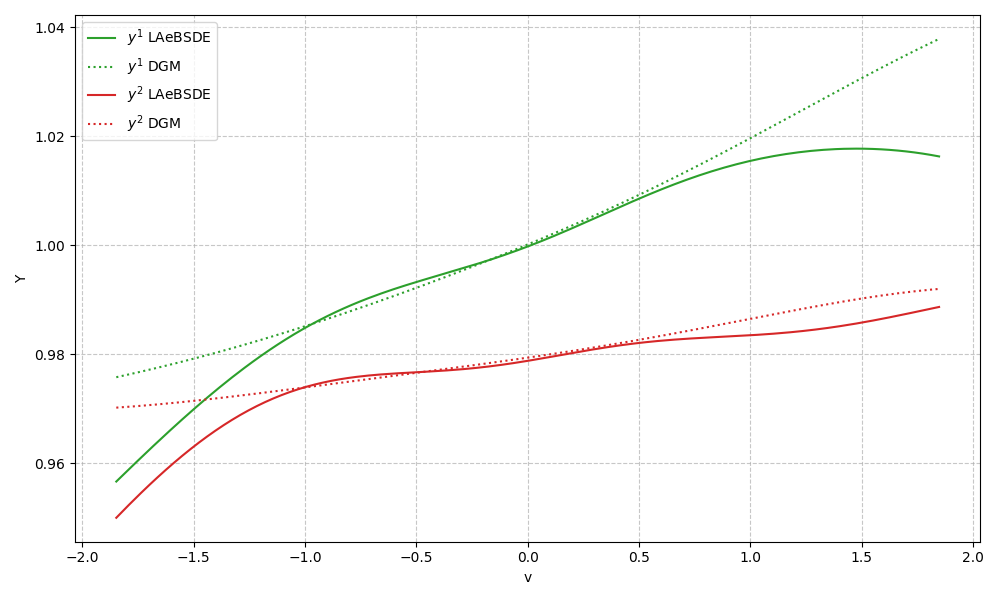}
\caption{Numerical approximation $\bar{y}$ for LAeBSDE and DGM.}
\label{fig:y_fct_PL}
\end{subfigure}
\caption{Trajectory of the regime-switching forward utility and solution}
\label{fig:ut_y_PL}
\end{figure}
Figure \ref{fig:y_fct_PL} displays the solution $\bar{y}$ in each state as a function of $v$. The approximations produced by the two methods coincide on the high density region of the invariant
law's support, with discrepancies growing as $v$ approaches $1$, where samples are sparce.

\paragraph{Impact of regime-switching frequency} We investigate how the intensity of the regime switches affects the solution and its numerical approximation. We scale the transition rate matrix $Q$ by a factor $q \in \BRA{0.01, 0.1, 1, 10, 100}$, that is $Q(q) = qQ$, which leaves the stationary distribution $\alpha$ unchanged while accelerating the chain. The corresponding PDE residuals and normalization errors are reported in Table \ref{tab:pde_residual_Q}.
\begin{table}[h!]
\centering
\begin{tabular}{c|l|cccccc}
\toprule
$q$ ($Q=q\,Q_0$) & Method & $\mathcal E_{L^2(\nu)}(\bar y, M)$ & $\mathcal E_{\mathrm{norm}}$ & $\bar\lambda$ & $\underset{v\in D}{\max}|y^1-y^2|$ & $C_Y$ \\
\midrule
\multirow{2}{*}{$0.01$} & LAeBSDE & $7.79\times10^{-4}$ & $4.33\times10^{-4}$ & $-1.15\times10^{-2}$ & $2.29\times10^{0}$ & \multirow{2}{*}{$1.22\times10^{2}$} \\
                        & DGM     & $8.10\times10^{-4}$ & $6.64\times10^{-4}$ & $-1.13\times10^{-2}$   & $2.25\times10^{0}$ & \\
\midrule
\multirow{2}{*}{$0.1$}  & LAeBSDE & $7.92\times10^{-4}$ & $2.34\times10^{-4}$ & $-2.72\times10^{-2}$ & $3.13\times10^{-1}$ & \multirow{2}{*}{$1.22\times10^{1}$} \\
                        & DGM     & $8.15\times10^{-4}$ & $4.69\times10^{-4}$ & $-2.66\times10^{-2}$   & $3.09\times10^{-1}$ & \\
\midrule
\multirow{2}{*}{$1$}    & LAeBSDE & $8.13\times10^{-4}$ & $7.69\times10^{-4}$ & $-2.94\times10^{-2}$ & $4.76\times10^{-2}$ & \multirow{2}{*}{$1.22\times10^{0}$} \\
                        & DGM     & $8.47\times10^{-4}$ & $4.67\times10^{-5}$ & $-2.86\times10^{-2}$   & $4.80\times10^{-2}$ & \\
\midrule
\multirow{2}{*}{$10$}   & LAeBSDE & $1.24\times10^{-3}$ & $3.99\times10^{-2}$ & $-3.09\times10^{-2}$ & $7.63\times10^{-3}$ & \multirow{2}{*}{$1.22\times10^{-1}$} \\
                        & DGM     & $8.59\times10^{-4}$ & $8.71\times10^{-4}$ & $-2.91\times10^{-2}$  & $5.55\times10^{-3}$ & \\
\midrule
\multirow{2}{*}{$100$}  & LAeBSDE & $3.65\times10^{-3}$ & $9.79\times10^{-1}$ & $-2.88\times10^{-2}$ & $1.64\times10^{-3}$ & \multirow{2}{*}{$1.22\times10^{-2}$} \\
                        & DGM     & $1.70\times10^{-3}$ & $2.41\times10^{-3}$ & $-5.66\times10^{-3}$   & $1.02\times10^{-3}$ & \\
\bottomrule
\end{tabular}
\caption{PDE residuals and normalization errors for different switching-rate scales $q$, and $\delta = -1$ ($M=10^5$ Monte-Carlo samples).}
\label{tab:pde_residual_Q}
\end{table}
\noindent
We observe that the maximal distance between the two regime solutions $\underset{v\in D}{\max}|y^1-y^2|$ decreases by a factor $10$ with $q$, which is coherent with the theoretical coupling bound $C_{Y}$, which scales as $q^{-1}$. In other words, when the chain switches rapidly, the regime specific solutions $y^{1}$ and $y^{2}$ collapse onto one common corrector, whereas for slow switching $(q=0.01)$, the two regimes are nearly decoupled and the solutions differ. The numerical error grows as $q$ increases for both method, and the normalization error of the LAeBSDE deteriorates at high switching rates, since the switching term has a high weight.

\paragraph{Risk aversion sensitivity}
Table~\ref{tab:delta} gathers the errors and estimated ergodic costs for several values of the risk-aversion parameter $\delta$, from the risk-tolerant case $\delta = 0.5$ to the strongly risk-averse case $\delta = -5$. Several observations stand out. First, the residuals deteriorate as $|\delta/(1-\delta)|$ grows: the $\mathcal E_{\mathrm{PDE}}$ of both solvers increases by a factor $10$ between $\delta=0.25$ and $\delta=0.5$. This is expected, as the factor $\frac{\delta} {2(1-\delta)}$ governs both the size of the quadratic nonlinearity in \eqref{eq:powergen_sect4} and the growth constant $C_{v}$, and hence the magnitude of $z$. The two methods are comparable on the PDE residual, and both keep the normalization error below $10^{-3}$.
\begin{table}[h!]
\centering
\begin{tabular}{c|l|cccc}
\toprule
$\delta$ & Method & $\mathcal E_{\mathrm{PDE}}$ & $\mathcal E_{\mathrm{norm}}$ & $\bar\lambda$ & time (s) \\
\midrule
$0.5$ & LAeBSDE & $5.32\times 10^{-3}$ & $1.30\times 10^{-4}$ & $8.02\times 10^{-2}$ & $331.8$ \\
      & DGM     & $5.61\times 10^{-3}$ & $3.08\times 10^{-5}$ & $7.89\times 10^{-2}$ & $8.5$ \\
\midrule
$0.25$ & LAeBSDE & $4.95\times 10^{-4}$ & $2.28\times 10^{-4}$ & $2.35\times 10^{-2}$ & $329.3$ \\
       & DGM     & $4.91\times 10^{-4}$ & $1.44\times 10^{-4}$ & $2.23\times 10^{-2}$ & $8.5$ \\
\midrule
$-1.0$ & LAeBSDE & $8.13\times 10^{-4}$ & $4.10\times 10^{-5}$ & $-2.93\times 10^{-2}$ & $331.4$ \\
       & DGM     & $8.39\times 10^{-4}$ & $1.81\times 10^{-4}$ & $-2.84\times 10^{-2}$ & $8.7$ \\
\midrule
$-2.0$ & LAeBSDE & $1.42\times 10^{-3}$ & $3.12\times 10^{-3}$ & $-3.84\times 10^{-2}$ & $331.0$ \\
       & DGM     & $1.41\times 10^{-3}$ & $1.21\times 10^{-4}$ & $-3.67\times 10^{-2}$ & $8.7$ \\
\midrule
$-5.0$ & LAeBSDE & $2.11\times 10^{-3}$ & $4.54\times 10^{-4}$ & $-4.59\times 10^{-2}$ & $330.4$ \\
       & DGM     & $2.09\times 10^{-3}$ & $6.25\times 10^{-4}$ & $-4.48\times 10^{-2}$ & $8.5$ \\
\bottomrule
\end{tabular}
\caption{PDE residuals $\mathcal E_{\mathrm{PDE}}$, normalization errors and estimated
ergodic costs for the LAeBSDE and DGM solvers, for several values of the risk-aversion
parameter $\delta$.}
\label{tab:delta}
\end{table}
Additionally, the estimated ergodic cost $\bar\lambda$ reproduces across all runs the sign relation $\operatorname{sign}\bar\lambda = \operatorname{sign}\delta$. The two solvers agree on $\bar\lambda$ to within $5\%$ in every case, indicating a coherent approximation form both methods. The relation follows by evaluating the criterion at the admissible constant strategy $\pi \equiv 0$ between $0$ and $t$, and exploiting the supermartingale property of the preference criterion $U$, which reads
\begin{eqnarray} \label{eq:supermart_slambda}
    \frac{x_{0}^{\delta}}{\delta} e^{-\lambda t} \e \SBRA{e^{y^{\alpha_{t}}(V_{t})}} \leq \frac{x_{0}^{\delta}}{\delta}\,e^{y^{\alpha_{0}}(V_{0})}.
\end{eqnarray}
Recall that $y^{\alpha_{t}}$ is bounded by sub-linearity of $y^{i}$ and Proposition 5 in \cite{hu2019ergodic}. Dividing by $\frac{x_{0}^{\delta}}{\delta}$ and taking the logarithm in \eqref{eq:supermart_slambda} yields
\begin{itemize}
    \item if $\delta > 0$, $\frac{1}{t} \PAR{\ln \e\SBRA{e^{y^{\alpha_{t}}(V_{t})}} - y^{\alpha_{0}}(V_{0})} \leq \lambda$, so that letting $t \to \infty$ leads $0 \leq \lambda$. 
    \item if $\delta < 0$, $\frac{1}{t} \PAR{\ln \e\SBRA{e^{y^{\alpha_{t}}(V_{t})}} - y^{\alpha_{0}}(V_{0})} \geq \lambda$, so that letting $t \to \infty$ leads $0 \geq \lambda$.
\end{itemize}

\paragraph{Optimal strategies and comparison with the single-regime case.}
Recall that $\pi^{i,*}(v) = \frac{\bar z^i(v) + \theta^i(v)}{1-\delta}$ denotes the optimal strategy rescaled by the stock volatility, see \eqref{wealth_switch}. Figure \ref{fig:strategies} displays the effective proportion of wealth invested in stock $i$, that is $\bar\pi^{i,*}(v) = \frac{1}{\sigma_i}\, \pi^{i,*}(v)$, obtained from the DGM. We report it both in the coupled (switching) case and in the decoupled case, where each regime is treated as a stand-alone market and the corresponding scalar ergodic BSDE of \cite{liang2017representation} is solved with the same parameters. For $\delta = -1$, the switching strategy is very close to the decoupled one and leads to realistic allocation levels. For $\delta = 0.5$, the allocations are heavily leveraged and the discrepancy between the coupled and decoupled strategies becomes visible.

This behavior can be explained from the structure of the solution. The regime coupling affects the strategy \emph{only} through the $\bar z^i$ component. The size of $z^{i}$ depends on the weight $\frac{\delta}{2(1-\delta)}$ in the generator \eqref{eq:powergen_sect4}, which tends to $0$ as $\delta \to 0^-$ and remains bounded by $1/2$ as $\delta \to -\infty$, whereas it blows up as $\delta \to 1^-$. This weight also rescales the allocation size, leading to leveraged position for $\delta$ close to $1$. Consequently, for extreme risk averse agents ($\delta$ negative), the optimal allocation is dominated by the myopic component, and the anticipation of regime switches encoded in $z^{i}$ is minimal. For risk-tolerant agents ($\delta$ positive, close to $1$), the component $z$ is amplified, and the switching effect becomes visible, at the cost of extreme leverage. Figure \ref{fig:strategies} illustrate this observation.

\begin{figure}[!ht]
\centering
\begin{subfigure}[t]{0.485\textwidth}
\centering
\includegraphics[width=\linewidth]{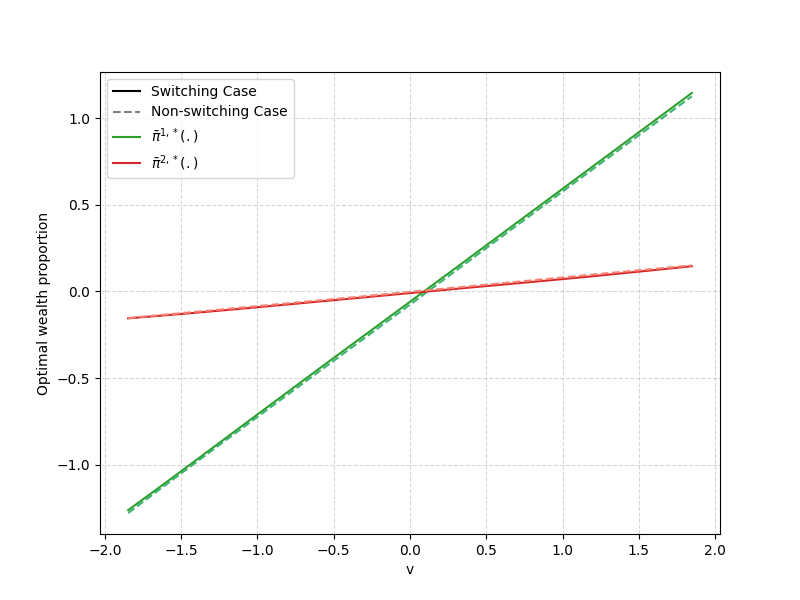}
\caption{Optimal allocation for $\delta = -1$.}
\label{fig:strat_dm1}
\end{subfigure}\hspace{0.02\textwidth}
\begin{subfigure}[t]{0.485\textwidth}
\centering
\includegraphics[width=\linewidth]{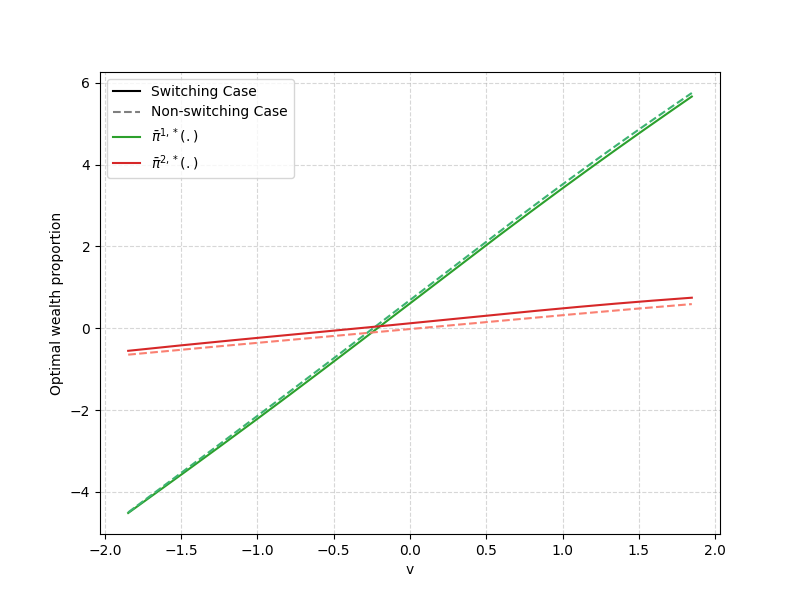}
\caption{Optimal allocation for $\delta = 0.5$}
\label{fig:strat_dp05}
\end{subfigure}
\caption{Optimal portfolio allocation for different risk aversion parameters.}
\label{fig:strategies}
\end{figure}

\paragraph{Interpretation of the ergodic cost.}
For $\delta = 0.5$, the ergodic cost is estimated at $\bar\lambda = 8.02 \times 10^{-2}$. In the representation $U^i(t,x) = \frac{x^\delta}{\delta} e^{y^i(V_t) - \lambda t}$, the constant $\lambda$ plays the role of an endogenous \emph{time-preference rate}: a positive $\lambda$ exponentially discounts future utility and thus reflects a preference for the present, whereas a negative $\lambda$ up-weights future utility. The sign of this rate is tied to the agent's risk preference: risk tolerant agents ($\delta > 0$) discount future utility as a positive rate, while risk averse agents ($\delta < 0$) exhibit a negative rate that favors future utility.

Across the whole range of risk aversions considered, the ergodic cost remains of order $10^{-2}$ in absolute value, which is consistent with the magnitudes considered for long-term social discount rates in the policy literature. The magnitude of such rates is critical in long-term decision problems, and has been widely debated, in particular in environmental policy and sustainable development models, where excessively high discount rates are criticized for undervaluing the welfare of future generations; see \cite{lecoq2004}, \cite{stern2007economics}. A noticeable feature of forward utilities derived from systems of ergodic BSDEs is that this rate is \emph{not} an exogenous input of the model but emerges as part of the solution $(y, z, \lambda)$, jointly determined by the market dynamics, namely the risk premia and the factor's ergodicity, and by the switching mechanism. This opens the possibility of calibrating the model parameters so as to reflect a desired time-preference structure.

\newpage 

\appendix

\section{Spaces for regularity of semimartingales} \label{annex:spaces}

Let $\U_{std}$ denote the set of standard deterministic utility functions. We next introduce spaces for studying the regularity of semimartingales in terms of their local characteristics, following \cite{kunita1997stochastic}, \cite{ikeda2014stochastic}.
\paragraph{Definition of seminorms -} Let $\beta$ be an $\R^{k}$-valued random field of class \hfill \linebreak
$C^{m, \delta}\PAR{]0, +\infty[}$, with $m$ a nonnegative integer and $\delta$ a number in $(0, 1]$, i.e. $\beta$ is $m$ times differentiable in $x$ and its $m^{\text{th}}$ derivative is $\delta$-Hölder, for any $t$, almost surely. We introduce the following family of random Hölder $K$-seminorms aiming to control asymptotic behavior of $\beta$ and the regularity of its Hölder derivatives, for any $K \subset ]0, +\infty[$
\begin{eqnarray*}
\NRM{\beta}_{m, K}(t, \omega) &=& \underset{x \in K}{\sup} \frac{\NRM{\beta(t, x, \omega)}}{x} + \sum_{1 \leq j \leq m} \underset{x \in K}{\sup} \NRM{\partial_{x}^{j} \beta(t, x, \omega)} \\
\NRM{\beta}_{m, \delta, K}(t, \omega) &=& \NRM{\beta}_{m, K}(t, \omega) + \underset{x, y \in K}{\sup} \frac{\NRM{\partial_{x}^{m} \beta(t, x, \omega) - \partial_{x}^{m} \beta(t, y, \omega)}}{\ABS{x-y}^{\delta}}.
\end{eqnarray*}
As mentioned in \cite{nicole2013exact}, the first term of these random semi-norms is divided by $x$, in order to control behavior in the neighborhood of $x=0$, by imposing at most linear vanishing at the boundary. This normalization also preserves traditional asymptotic results from \cite{kunita1997stochastic}.

\paragraph{Associated function spaces -} The previous norms are related to the space parameter. We add the temporal dimension by requiring these seminorms (or their square) to be integrable in time with respect to Lebesgue measure on $[0, T]$. We then define the following sets:
\begin{enumerate}
    \item $\K^{m}_{\loc}$ (resp. $\overline{\K}^{m}_{\loc}$) denotes the set of $C^{m}$-random fields $\beta$ such that $\frac{\beta}{x}$ and $\partial_{x}^{k} \beta$ for $k \leq m$ are $\Lr^{1}$ (resp. $\Lr^{2}$)-locally bounded, that is for any compact $K \subset ]0, +\infty[$ and any $T$, 
    \begin{eqnarray*}
    \int_{0}^{T} \NRM{\beta}_{m, K}(t, \omega)dt < \infty, \quad \PAR{\text{resp. } \int_{0}^{T} \NRM{\beta}_{m, K}^{2}(t, \omega)dt < \infty}.
    \end{eqnarray*}
    \item $\K^{m, \delta}_{\loc}$ (resp. $\overline{\K}^{m, \delta}_{\loc}$) denotes the set of $C^{m, \delta}$-random fields such that for any compact $K \subset ]0, +\infty[$ and any $T$, 
    \begin{eqnarray*}
    \int_{0}^{T} \NRM{\beta}_{m, \delta, K}(t, \omega)dt < \infty, \quad \PAR{\text{resp. } \int_{0}^{T} \NRM{\beta}_{m, \delta, K}^{2}(t, \omega)dt < \infty}.
    \end{eqnarray*}
    \item When these norms are defined on the whole space $]0, +\infty[$, the derivatives up to a certain order are bounded in the spatial parameter, with an integrable (resp. square integrable) random bound, so that we use the notation $\K_{b}^{m}, \, \overline{\K}_{b}^{m}$ or $\K_{b}^{\delta, m}, \, \overline{\K}_{b}^{m, \delta}$.
\end{enumerate}

\newpage 
\section{Proof of Lemma \ref{lem:runningsupV}}
\label{proofLemma 1.1}
For the sake of completeness, we provide the proof of the estimate of the running supremum in Lemma \ref{lem:runningsupV}.

\begin{proof}
Applying Itô's formula to $\ABS{V_{t}}^{2}$, and using the dissipative Assumption \ref{ass:weakdissass} yields
\begin{eqnarray*}
d\ABS{V_{t}}^{2} &=& \PAR{2V_{t} \mu(V_{t}) + \ABS{\kappa}^{2}}dt + 2V_{t} \kappa^{\top}dW_{t} \\
&\leq& \PAR{-C_{\mu} \ABS{V_{t}}^{2} + \frac{\ABS{\mu(0)}^{2}}{C_{\mu}} + \ABS{\kappa}^{2}}dt + 2V_{t} \kappa^{\top}dW_{t}.
\end{eqnarray*}
Dropping the negative term and taking the supremum over $s \in \SBRA{0, \tau}$,
\begin{eqnarray*}
\underset{0 \leq t \leq \tau}{\sup} \ABS{V_{t}}^{2} \leq \ABS{v_{0}}^{2} + C_{0} \tau + 2 \underset{0 \leq t \leq \tau}{\sup} \ABS{\int_{0}^{t} V_{s}\kappa^{\top} dW_{s}}.
\end{eqnarray*}
We now aim to obtain a bound of the above quantity in $L^{q}$. Elevating to the power $q$ and taking expectation, we get that there exists a constant $C_{q}$ such that
\begin{eqnarray*}
\e \SBRA{\underset{0 \leq t \leq \tau}{\sup} \ABS{V_{t}}^{2q}} \leq C_{q} \PAR{\ABS{v_{0}}^{2q} + \e \SBRA{\tau^{q}} + \e \SBRA{\underset{0 \leq t \leq \tau}{\sup} \ABS{\int_{0}^{t} V_{s}\kappa^{\top} dW_{s}}^{q}}}.
\end{eqnarray*}
An application of the Burkholder-Davis-Gundy inequality in $L^{q}$, then allow to bound the local martingale, leading
\begin{eqnarray}
\e \SBRA{\underset{0 \leq t \leq \tau}{\sup} \ABS{V_{t}}^{2q}} &\leq& C_{q} \PAR{\ABS{v_{0}}^{2q} + \e \SBRA{\tau^{q}} + C_{q} \ABS{\kappa}^{q} \e \SBRA{\PAR{\int_{0}^{\tau} \ABS{V_{s}}^{2}ds}^{q/2}}} \nonumber \\
&\leq& C_{q} \PAR{\ABS{v_{0}}^{2q} + \e \SBRA{\tau^{q}} + C_{q} \ABS{\kappa}^{q} \e \SBRA{\tau^{\frac{q}{2}} \underset{0 \leq t \leq \tau}{\sup} \ABS{V_{t}}^{q}}}. \label{eq:BDGrunningV_p2}
\end{eqnarray}
Next applying Young's inequality, for any $\epsilon > 0$, there exists $C_{\epsilon}$ such that
\begin{eqnarray*}
\tau^{\frac{q}{2}} \underset{0 \leq t \leq \tau}{\sup} \ABS{V_{t}}^{q} \leq \epsilon \underset{0 \leq t \leq \tau}{\sup} \ABS{V_{t}}^{2q} + C_{\epsilon} \tau^{q}.
\end{eqnarray*}
Injecting this back into \eqref{eq:BDGrunningV_p2} yields 
\begin{eqnarray*}
\e \SBRA{\underset{0 \leq t \leq \tau}{\sup} \ABS{V_{t}}^{2q}} &\leq& C_{q} \PAR{\ABS{v_{0}}^{2q} + \e \SBRA{\tau^{q}} + C_{q} \ABS{\kappa}^{q} ( \epsilon \e \SBRA{\underset{0 \leq t \leq \tau}{\sup} \ABS{V_{t}}^{2q}} + C_{\epsilon} \e \SBRA{\tau^{q}}}.
\end{eqnarray*}
For $\epsilon$ small enough, the term $\e \SBRA{\underset{0 \leq t \leq \tau}{\sup} \ABS{V_{t}}^{2q}}$ can be absorbed on the left hand side, leading 
\begin{eqnarray}
\e \SBRA{\underset{0 \leq t \leq \tau}{\sup} \ABS{V_{t}}^{2q}} \leq C_{q} \PAR{1 + \ABS{v_{0}}^{2q} + \e \SBRA{\tau^{q}}}.
\end{eqnarray}
\end{proof}

\newpage
\section{Proof of Proposition \ref{prop:consistpower}}

\begin{proof} 
Let $i \in \I$ and $U^i$ utility random field of power type verifying \eqref{eq:powUi}, so that $U^i$ have the local characteristics $\beta^i(t,x) = b_t^i U^i(t,x)$
and $\gamma^i(t,x) = \nu_t^i U^i(t,x)$. 
We start this proof by recalling useful relations for power utilities defined by \eqref{eq:powUi}:
    \begin{eqnarray*}
       &  xU^i_x(t,x) = \delta^i U^i(t,x),\\
       & x^2 U_{xx}^i(t,x) = \delta^i(\delta^i - 1) U^i(t,x), \\
       & x  U_{xx}^i(t,x) = (\delta^i - 1) U^i(t,x). 
    \end{eqnarray*}
   Combining this with \eqref{eq:pi_regime_i}  yields that the  optimal policy in regime $i$ is given by: 
   \begin{equation*}
       \pi^{i,*}_t = \Proj_{\Rr^{i}} \left(  \frac{\nu^i_t + \theta^i(V_t)}{1-\delta_i} \right). 
   \end{equation*}
Then, by Theorem \ref{thm:consistswitch}, we have
\begin{equation*}
        Q^i (t,x,\pi^{i,*}_t)= - \NRM{x \pi^{i,*}_t}^{2} = - x^2 \dist^{2} \PAR{\Rr^i, \frac{  \nu^i_t + \theta^{i}(V_t)}{1 - \delta_i}}, 
\end{equation*}
and the time consistency assumption \eqref{eq:driftcondswitch} becomes 
\begin{align*}
    b_t^i U^i(t,x) &  = \frac{1}{2} U^i_{xx}(t,x)x^2 \Proj^{2} \PAR{\Rr^{i}, \frac{  \nu^i_t + \theta^{i}(V_t)}{1 - \delta_i}} - \sum_{j\in \I}\left( U^j(t,x) - U^i(t,x)\right) q^{ij}\\
    & = U^i(t,x) \left( \frac{\delta_i(\delta_i - 1)}{2} \Proj^{2} \PAR{\Rr^{i}, \frac{  \nu^i_t + \theta^{i}(V_t)}{1 - \delta^i}} - \sum_{j\in \I}\left( \frac{U^j(t,x)}{U^i(t,x)} - 1 \right) q^{ij}\right). 
\end{align*}
The time consistency assumption can thus be rewritten as:
\begin{equation*}
      b_t^i  = \frac{\delta_i(\delta_i - 1)}{2} \Proj^{2} \PAR{\Rr^{i}, \frac{  \nu^i_t + \theta^{i}(V_t)}{1 - \delta_i}} - \sum_{j\in \I}\frac{P^j_t}{P^i_t} \frac{\delta_i}{\delta_j} x^{\delta_i - \delta_j} q^{ij}. 
\end{equation*}
By assumption \ref{asslispch_switch},  for $j \neq i $ $q^{ij}>0$. Thus, we have necessarily $\delta_i = \delta_j$ for all $i,j \in \I$, which concludes the proof. 
\end{proof}
\newpage
\bibliographystyle{alpha-fr}
\bibliography{Biblio}

\end{document}